\newtheorem{Theorem}{Theorem}[section]
\newtheorem{Lemma}[Theorem]{Lemma}
\newtheorem{Proposition}[Theorem]{Proposition}
\newtheorem{Corollary}[Theorem]{Corollary}
\theoremstyle{definition}
\newtheorem{Definition}[Theorem]{Definition}
\newtheorem{Remark}[Theorem]{Remark} 
\numberwithin{equation}{section}
\newcommand{\R}{\mathbb R}
\newcommand{\C}{\mathbb C}
\newcommand{\Uone}{{\rm U}_1}
\newcommand{\id}{\operatorname{id}}
\newcommand{\la}{\langle}
\newcommand{\ra}{\rangle}
\newcommand{\SL}{{\rm SL}(2, \mathbb C)}
\newcommand{\SU}{{ \rm SU(2)}}
\newcommand{\SO}{{ \rm SO(4)}}
\newcommand{\U}{{ \rm U(1)}}
\newcommand{\Sp}{{ \rm Sp(1)}}
\newcommand{\bsi}{{  \boldsymbol{i}}}
\newcommand{\bsj}{{  \boldsymbol{j}}}
\newcommand{\bsk}{{  \boldsymbol{k}}}
\newcommand{\bsH}{{  \boldsymbol{H}}}
\newcommand{\bsR}{{  \boldsymbol{R}}}
\newcommand{\bsC}{{  \boldsymbol{C}}}
\renewcommand{\Re}{\operatorname {Re}}
\renewcommand{\Im}{\operatorname {Im}}
\newcommand{\red}[1]{{\leavevmode\color{red}{#1}}} 
\begin{document}

\title[]{Minimal Lagrangian surfaces in 
the two dimensional complex quadric via the 
loop group method}
 \author[S.-P.~Kobayashi]{Shimpei Kobayashi}
 \address{Department of Mathematics, Hokkaido University, 
 Sapporo, 060-0810, Japan}
 \email{shimpei@math.sci.hokudai.ac.jp}
 \author[S.~Zeng]{Sihao Zeng}
 \address{Department of Mathematics, Hokkaido University, 
 Sapporo, 060-0810, Japan}
 \email{sihao.zeng.f1@elms.hokudai.ac.jp}
 \thanks{The first named author is partially supported by JSPS 
 KAKENHI Grant Number JP22K03304.
 The second named author is  supported 
 by Special Program: International Graduate Course for Data-driven and Hypothesis-driven Science (IGC-DHS)}
 \subjclass[2020]{Primary 53C42; Secondary 53D12}
 \keywords{Minimal surfaces; Lagrangian surfaces;
 complex quadric; flat connections; loop groups}
 \date{\today}
\pagestyle{plain}
\begin{abstract}We develop a loop group (DPW-type) representation for minimal Lagrangian surfaces in the complex quadric
$Q_{2} \cong \mathbb S^{2}\times \mathbb S^{2}$, formulated via a flat family of connections
$\{\nabla^\lambda\}_{\lambda\in \mathbb S^{1}}$ on a trivial bundle.
We prove that minimality is equivalent to the flatness of $\nabla^\lambda$ for all $\lambda$,
describe the associated isometric $\mathbb S^{1}$-family, and establish a precise correspondence with
minimal surfaces in $\mathbb S^{3}$ through their Gauss maps.
Our framework unifies and streamlines earlier constructions (e.g., Castro--Urbano) and yields
explicit families including $\mathbb R$-equivariant, radially symmetric, and trinoid-type examples.
\end{abstract}
\maketitle
\section*{Introduction}
Minimal surfaces in K\"ahler manifolds \cite{CW1983} play a fundamental role at the interface of complex and Riemannian geometry, and in the Lagrangian setting, they further connect to symplectic geometry \cite{O1994}. 
 Thus minimal Lagrangian surfaces in two-dimensional K\"ahler manifolds are of particular importance.
 Furthermore, the complex projective plane $\mathbb{C}P^2$ and 
 the product $\mathbb{S}^{2} \times \mathbb{S}^{2}$
 are the only two-dimensional Hermitian symmetric spaces of compact type, and both are 
 K\"ahler--Einstein surfaces.
 Indeed,  I. Castro and F. Urbano  studied  minimal Lagrangian surfaces in 
 $\mathbb{C} P^2$ \cite{CU1994}
 and in $\mathbb{S}^{2} \times \mathbb{S}^{2}$ \cite{CU}, revealing a variety of fundamental properties that highlight their geometric significance. Since $\mathbb{S}^{2} \times \mathbb{S}^{2}$
 is isometric to  the complex quadric $Q_2$  in the complex projective space 
 $\mathbb{C}P^{3}$, minimal (Lagrangian) surfaces in $Q_2$ are 
 also of interest in algebraic geometry; see \cite{JW2013, WX}. 
 
 Meanwhile, harmonic maps from a surface into symmetric spaces can be formulated within the framework of integrable systems, via  the construction of a family of flat connections on a trivial 
 principal $G$-bundle;  see the loop group method of J.~Dorfmeister, F.~Pedit, and H.~Wu \cite{DPW} by exploiting loop group decompositions of infinite-dimensional Lie groups (the DPW method).

This method has been successfully applied to Hamiltonian stationary Lagrangian surfaces in $\mathbb{C}^{2}$ and $\mathbb{C}P^{2}$ \cite{HR2002, HR2005}, 
as well as to minimal Lagrangian surfaces in $\mathbb{C}P^{2}$ \cite{ DKM2020, DM2021, DM2025}. It is therefore natural to extend this approach to minimal Lagrangian surfaces in the complex quadric
$Q_2$, thereby unveiling their structure via integrable-systems techniques.

In this paper, we begin with a Lagrangian conformal immersion 
\(f : M  \to Q_{2} \subset \mathbb{C}P^3\) from a Riemann surface $M$, 
together with a horizontal lift 
\(\mathfrak{f} : \mathbb{D} \subset M \to \mathbb{S}^{7} \subset \mathbb{C}^{4}\). 
We then consider a natural \(\mathrm{SO}(4)\)-moving frame and its Maurer--Cartan form 
\(\omega\), which takes values in 
\(\mathfrak{so}(4) = \mathrm{Lie}(\mathrm{SO}(4))\). Although the resulting structure equations are involved, we express them in terms of the invariants of $\mathfrak f$:
\[
e^{u}:= \langle \mathfrak f_{z},\overline{\mathfrak f_{z}}\rangle,\quad
\alpha:=\langle \mathfrak f_{z},\mathfrak f_{z}\rangle,\quad
\beta:=\langle \mathfrak f_{z},\mathfrak f_{\bar z}\rangle\quad\mbox{and}\quad
\phi:=e^{-u}\langle \mathfrak f_{z\bar z},\overline{\mathfrak f_{\bar z}}\rangle,
\]
where $\langle \,, \,\rangle$ denotes the scalar product in $\mathbb C^4$ and the 
subscripts $z$ and $\bar z$ are the derivatives with respect to the conformal coordinate 
$z \in \mathbb D$ and its conjugate.
 Note that $ds^{2}=2e^{u}\,\mathrm{d}z\mathrm{d}\bar z$ is the induced metric and 
 $\alpha \, \mathrm{d} z^2$  is the quadratic differential
 and the vanishing of \(\phi \, \mathrm{d}z\) is equivalent to the minimality of 
\(f\) \cite[Theorem 3.2]{WX}.

Since \(Q_2\) is a Hermitian symmetric space 
$\SO/ \rm{SO}(2) \times \rm{SO}(2)$, we can introduce a special 
family of connection $1$-forms \(\mathrm{d} + \omega^{\lambda}\), 
parametrized by \(\lambda \in \mathbb{S}^{1}\), such that 
\(\omega^{\lambda} |_{\lambda=1}\) becomes the Maurer--Cartan form of 
\(\mathfrak{f}\). 
In Theorem~\ref{Thm1}, we show that the flatness of 
\(\mathrm{d} + \omega^{\lambda}\) for all \(\lambda \in \mathbb{S}^{1}\) is 
equivalent to minimality of a Lagrangian surface. 
Moreover, minimality is further equivalent to the conditions that the quadratic differential 
 \(\alpha \, \mathrm{d} z^2\) is holomorphic and that the argument of $\beta$ is constant.
 Then we can take a new local horizontal lift 
 so that the complex 
function \(\beta\) becomes a non-negative real function, according to 
Theorem~\ref{Thm1}. 

Next, by choosing a suitable gauge transformation, we show the existence of a family 
of isometric minimal Lagrangian immersions 
\(\{ f^{\lambda} \}_{\lambda \in \mathbb{S}^{1}}\) 
(Theorem~\ref{thm2}). 
Moreover, it turns out that the induced metric can be expressed in 
terms of a function \(\hat u\) and the holomorphic function \(\alpha\) as
\[
 2 e^{u}\, \mathrm{d} z\mathrm{d}\bar{z}
   = \bigl(e^{\hat u} + |\alpha|^{2} e^{-\hat u}\bigr)\, \mathrm{d} z\mathrm{d}\bar{z}.
\]
 where $\hat u$ satisfies the sinh-Gordon equation:
\[\hat u_{z \bar z} + e^{\hat u} - |\alpha|^{2} e^{-\hat u} = 0,\]
It turns out that the metric of $f$ is 
induced by half the Sasakian metric of the unit tangent bundle $\mathrm{U}\mathbb S^3$
of a minimal surface in the unit three-sphere $\mathbb S^3$.
Thus we establish a  correspondence between the minimal Lagrangian 
surfaces in \(Q_2\) and the Gauss maps of minimal surfaces in \(\mathbb{S}^{3}\) 
(Theorem~\ref{Thm corresponding}). 
This correspondence was previously shown under the 
additional condition \(\alpha \neq 0\) in \cite{CU}.
More generally, such correspondences of hypersurfaces in unit spheres and 
Lagrangian immersions in Grassmannians have been studied in \cite{P1997, VW2020}.
Since the structure equation, that is the sinh-Gordon equation, of a minimal Lagrangian surface in \(Q_{2}\) 
coincides with those of a harmonic map from a Riemann surface into 
the unit two-sphere \(\mathbb{S}^{2}\), this suggests that the loop group method can be applied 
to minimal Lagrangian surfaces in \(Q_{2}\) through harmonic maps into 
\(\mathbb{S}^{2}\). 
Indeed, by the well-known Lie group isomorphism
\[
 \mathrm{SO}(4) \cong \bigl(\mathrm{SU}(2) \times \mathrm{SU}(2)\bigr) / \boldsymbol{Z}_{2},
\]
this can be achieved (see Section~\ref{sc:DPW}).

In Section~\ref{sc:Ex}, we construct several examples of minimal Lagrangian 
surfaces via the loop group method: a family of totally geodesic spheres and totally 
geodesic tori. 
As new examples, we also construct families of $\mathbb R$-equivariant surfaces, radially 
symmetric surfaces, and trinoids which are minimal Lagrangian immersions from 
a thrice-punctured sphere.

\subsection*{Main results} 
We demonstrate for the first time that the loop group method 
can be applied to minimal Lagrangian geometry in the complex quadric 
\(Q_{2} \subset \mathbb{C}P^3 \), yielding new explicit families such as $\mathbb R$-equivariant and radially 
symmetric surfaces and trinoids.  Finally, we summarize the main results.
\begin{description}
  \item[(R1)] Characterization (Theorem \ref{Thm1}).
  For \(f:\mathbb{D}\to Q_{2}\),
  \(f\) minimal \(\iff\) \(\mathrm{d}+\omega^{\lambda}\) are flat for all \(\lambda\in\mathbb{S}^{1}\)
  \(\iff\) \(\alpha\) holomorphic and \(\arg\beta\) constant.
  After a normalized horizontal lift, we can assume \(\beta\ge0\); then \(\hat u\) solves
  the sinh-Gordon equation. 

  \item[(R2)] Associated family   (Theorem \ref{thm2}).
  An isometric \( \mathbb{S}^{1}\)-family \(\{f^{\lambda}\}\) exists with the holomorphic quadratic 
  differential  \(\alpha^{\lambda}=\lambda^{-2} \alpha\).

  \item[(R3)]  Correspondence with \(\mathbb S^{3}\)   (Theorem \ref{Thm corresponding}).
  Minimal Lagrangian surfaces in \(Q_{2}\) correspond to the Gauss maps of minimal surfaces in \( \mathbb{S}^{3}\),
  without assuming \(\alpha\neq0\).

  \item[(R4)] The DPW method and new examples   (Sections \ref{sc:DPW} and \ref{sc:Ex}).
  The method has been established in Section \ref{sc:DPW}. In Section \ref{sc:Ex},
  totally geodesic spheres/tori, and new families of examples: $\mathbb R$-equivariant surfaces, radially symmetric surfaces, and trinoids, obtained by appropriate choices of holomorphic potentials.
  \end{description}

{\bf Acknowledgments:}
 We would like to express our sincere gratitude to Prof. Jun-ichi Inoguchi for inspiring our interest in this problem. We also would like to thank Prof. David Brander and Prof. Peng Wang for helpful discussions.
\section{Minimal Lagrangian surfaces in $Q_2$, the family of flat connections and minimal surfaces in $\mathbb S^3$}
In this section, we consider a Lagrangian immersion $f : M \to Q_{2}$ from a  Riemann surface $M$ into the complex quadric $Q_2$ in $\mathbb{C}P^{3}$.
 We explicitly compute the $\mathrm{SO}(4)$-moving frame and its Maurer-Cartan form and equation
 for $f$ and then we study the special family of connections 1-forms $\text{d} + \omega^{\lambda}$, parametrized by $\lambda \in \mathbb{S}^{1}$ such that $\omega^{\lambda}|_{\lambda=1}$ is the Maurer-Cartan form of 
$f$.  We show that the flatness of the whole family $\text{d} + \omega^{\lambda}$ is equivalent to the minimality of the Lagrangian surfaces $f$, Theorem \ref{Thm1}, and thus there exists a
 one-parameter family of minimal Lagrangian surfaces, Theorem \ref{thm2}. Furthermore, we show a correspondence between minimal Lagrangian surfaces and minimal surfaces in $\mathbb S^3$, Theorem \ref{Thm corresponding}.

\subsection{Lagrangian surfaces in the two dimensional quadric $Q_2$}
 Let $( N , \omega )$ be a K\"ahler manifold of $\text{dim}_{\mathbb{C}}N = n$ with the 
 K\"ahler form $\omega$. An immersion $f : M \to N$ from an $m$-dimensional manifold $M$ into $N$ is said to be $\textit{totally~real}$ if $f^{\ast}\omega = 0$. 
 In particular, a totally real immersion $f$ is said to be $\textit{Lagrangian}$ if $m = n$.
  
Let $\mathbb{C}^{n}$ be the $n$-dimensional complex Euclidean space with the complex bilinear form
$\la \,,\, \ra$ defined by
 \begin{equation}\label{eq:sclar}
     \left \la \boldsymbol{z} , \boldsymbol{w} \right \ra = \sum^{n}_{k = 1}z_{k}w_{k}, \quad \text{for}~\boldsymbol{z} = \left( z_{1}, \ldots, z_{n} \right), \, \boldsymbol{w} = \left( w_{1}, \ldots, w_{n}\right) \in \mathbb{C}^{n}.
 \end{equation}
 The standard Hermitian inner product $(~,~)$ on $\mathbb{C}^{n}$ is given by $\left( \boldsymbol{z}, \boldsymbol{w} \right) = \left \la \boldsymbol{z} , \bar{\boldsymbol{w}} \right \ra$, where $\bar{\boldsymbol{w}}$ is the conjugate of $\boldsymbol{w}$. In this paper, three-dimensional complex projective space 
 is denoted by $\mathbb{C}P^{3} : = \mathbb C^4\setminus \{0\}/\sim$, and 
  the complex quadric $Q_2$ in $\mathbb{C}P^{3}$ can be realized by
\begin{equation}\label{eq:Q2}
Q_{2} = \left \{ \left [ \boldsymbol{z} \right ] \in \mathbb{C}P^{3} \mid  \boldsymbol{z} \in 
\mathbb C^4,  \la 
 \boldsymbol{z}, \boldsymbol{z}\ra = 0 \right \}.
\end{equation}
 We introduce the Fubini-Study metric of constant holomorphic sectional curvature $4$
 on $\mathbb{C}P^{3}$, which can be 
 explicitly written by
 \[ ds^2 = (dZ - \langle dZ,  \bar Z \rangle Z ) \otimes (d\bar Z-\langle d \bar Z , Z\rangle \bar Z), \]
 where $Z$ in $\mathbb S^7 \subset \mathbb C^4$ is a local holomorphic section of the tautological bundle, 
 and thus $\mathbb{C}P^{3}$ becomes a K\"ahler manifold. 
 The metric on $Q_2$ is naturally induced by the Fubini-Study metric, and thus $Q_2$ becomes a K\"ahler surface. 
 It is well known that $Q_{2}$ is isometric to $\mathbb S^2 \times  \mathbb S^2$, 
 where the curvatures of the two $2$-sphere $\mathbb S^2$ are normalized to $4$, \cite{CU, WV2021}.

 Let $f : M \to Q_{2}$ be a Lagrangian conformal immersion from a Riemann surface $M$ into $Q_{2}$. Moreover, let $\mathbb{D} \subset M$ be a simply connected domain with conformal coordinate $z = x + iy$. 
 Then the induced metric on $\mathbb{D}$ can be computed as
 \[
 ds^{2}_{M} = 2e^{u}\text{d}z\text{d}\bar{z}.
 \]
 Let $\mathfrak{f}: \mathbb{D} \to \mathbb{S}^{7} \subset \mathbb C^4$ be a local lift of $f$, i.e. $f = \pi \circ \mathfrak{f}$, where $\pi : \mathbb{S}^{7} \to \mathbb{C}P^{3}$ is the Hopf fibration. In fact, the projection $f$ can be realized as $[\mathfrak{f}]$.
 Since $f$ is conformal and Lagrangian, we obtain
 \begin{equation*}
 \langle  \mathfrak{f}_{z}, \overline{\mathfrak{f}_{\bar{z}}} \rangle  = 0, \quad \langle \mathfrak{f}_{z}, \overline{\mathfrak{f}_{z}} \rangle = \langle \mathfrak{f}_{\bar{z}}, \overline{\mathfrak{f}_{\bar{z}}} \rangle = e^{u}.
 \end{equation*}
 Here $\partial_z = \tfrac12 (\partial_x - i \partial_y)$ and 
 $\partial_{\bar z} = \tfrac12 (\partial_x + i \partial_y)$ are the complex differentiations,
 and the imaginary unit  $\sqrt{-1}$ is denoted by $i$.
 Since $f(M) \subset Q_{2}$, we have
 \begin{equation*}
     \langle  \mathfrak{f}, \mathfrak{f} \rangle  = 0, \quad \langle \mathfrak{f}_{z}, \mathfrak{f}  \rangle = \left\la \mathfrak{f}_{\bar{z}}, \mathfrak{f} \right\ra = 0.
 \end{equation*}
 
 \begin{Definition}
 If a local lift $\mathfrak{f}$ defined above satisfies 
 \begin{equation*}
     \langle \mathfrak{f}_{z} , \bar{\mathfrak{f}} \rangle = \langle \mathfrak{f}_{\bar{z}} , \bar{\mathfrak{f}} \rangle = 0, 
 \end{equation*}
  then we call $\mathfrak{f}$ a $\textit{horizontal~lift}$.
 \end{Definition}
 Since horizontal lifts of $f$ are not unique, we fix one horizontal lift $\mathfrak f$ for time being.
Since the special orthogonal Lie group $\SO = \left\{  A \in M_{4 \times 4} (\R) \mid  A^T = A^{-1},
 \det A =1\right\}$ acts transitively  on $Q_2$ as the orientation-preserving isometry, $Q_{2}$ is isomorphic to the symmetric space: 
 \begin{equation}\label{eq:symQ2}
 Q_2 = \SO/ \rm{SO}(2) \times \rm{SO}(2),
 \end{equation}
  see \cite{Y1989}. Indeed, by choosing the 
  involution $\sigma = \operatorname{Ad} \operatorname{diag}(1, 1, -1, -1)$ on $\SO$,
  the fixed point set of $\sigma$ is exactly $ \rm{SO}(2) \times \rm{SO}(2)$.
    Let $f : M \to Q_2=\SO/ \rm{SO}(2) \times \rm{SO}(2)$ and let $\mathcal{F}$ be a local lift of $f$ as
 \begin{equation*}
     \mathcal{F}:= \left( \frac{1}{\sqrt{2}}\left( \mathfrak{f} + \bar{\mathfrak{f}} \right), -\frac{i}{\sqrt{2}}\left( \mathfrak{f} - \bar{\mathfrak{f}} \right), e_{3}, e_{4} \right): \mathbb{D} \subset M \to \rm{SO}(4),
 \end{equation*}
 where $\mathfrak{f}$ is a horizontal lift defined above, and $e_3$ and $e_4$ are defined by 
     \begin{equation*}
     e_{3} = \frac{\mathfrak{f}_{z} + \overline{\mathfrak{f}_{z}}}{\sqrt{2e^{u} + \alpha + \bar{\alpha}}}, \quad e_{4} = -\frac{i\left \{\mathfrak{f}_{z}\left( e^{u} + \bar{\alpha} \right) - \overline{\mathfrak{f}_{z}}\left( e^{u} + \alpha \right) \right \} }{\sqrt{\left( 2e^{u} + \alpha + \bar{\alpha} \right)\left( e^{2u} - \alpha\bar{\alpha} \right)}},
 \end{equation*}
 with 
 \begin{equation}\label{eq:alpha}
 \alpha := \la \mathfrak{f}_{z}, \mathfrak{f}_{z} \ra.
 \end{equation}
 By direct computation $e^{2u} - \alpha\bar{\alpha} \geq  0$ and 
 we assume that $e^{2u} - \alpha\bar{\alpha} > 0$ for the time being so that 
 the frame $\mathcal F$ is well-defined.
 Note that this assumption is not necessary for minimal Lagrangian surfaces, as we will see later,
 see Remark \ref{Rm:assumption}.
 Its Maurer-Cartan form can be computed as follows:
 \begin{equation}
     \omega = \mathcal{F}^{-1}\text{d}\mathcal{F} =  \mathcal{F}^{-1}\mathcal{F}_{z}\text{d}z + \mathcal{F}^{-1}\mathcal{F}_{\bar{z}}\text{d}\bar{z} = \mathcal{U}\text{d}z + \mathcal{V}\text{d}\bar{z}, 
 \end{equation}
 where \begin{equation}\label{eq:pq}
     \mathcal{U} = \begin{pmatrix}
 0 & 0 & p_{1} & p_{2} \\
 0 & 0 & p_{3} & p_{4} \\
 -p_{1} & -p_{3} & 0 & q \\
 -p_{2} & -p_{4} & -q & 0
\end{pmatrix}, \quad \mathcal{V} = \begin{pmatrix}
 0 & 0 & \bar{p}_{1} & \bar{p}_{2} \\
 0 & 0 & \bar{p}_{3} & \bar{p}_{4} \\
 -\bar{p}_{1} & -\bar{p}_{3} & 0 & \bar{q} \\
 -\bar{p}_{2} & -\bar{p}_{4} & -\bar{q} & 0
\end{pmatrix}, 
 \end{equation}
 \begin{equation*}
	\left\{
		\begin{aligned}
		&p_{1} = \frac{1}{\sqrt{2}}\left( \la e_{3z} , \mathfrak{f} \ra + \la e_{3z} , \bar{\mathfrak{f}} \ra \right) = -\frac{1}{\sqrt{2}} \left( \frac{\alpha + e^{u} + \bar{\beta}}{\sqrt{2e^{u} + \alpha + \bar{\alpha}}} \right), \\
		&p_{2} = \frac{1}{\sqrt{2}}\left( \la e_{4z} , \mathfrak{f} \ra + \la e_{4z} , \bar{\mathfrak{f}}  \ra \right) = \frac{i}{\sqrt{2}} \left[ \frac{\left( \alpha\bar{\alpha} - e^{2u} \right) - \bar{\beta}\left( e^{u} + \alpha \right)}{\sqrt{\left( 2e^{u} + \alpha + \bar{\alpha} \right)\left( e^{2u} - \alpha\bar{\alpha} \right) }} \right], \\
        &p_{3} = -\frac{i}{\sqrt{2}}\left( \la e_{3z}, \mathfrak{f}  \ra - \la e_{3z}, \bar{\mathfrak{f}} \ra \right) = \frac{i}{\sqrt{2}} \left( \frac{\alpha + e^{u} - \bar{\beta}}{\sqrt{2e^{u} + \alpha + \bar{\alpha}}} \right),\\
        &p_{4} = -\frac{i}{\sqrt{2}}\left( \la e_{4z}, \mathfrak{f} \ra - \la e_{4z}, \bar{\mathfrak{f}} \ra \right) = \frac{1}{\sqrt{2}} \left[ \frac{\left( \alpha\bar{\alpha} - e^{2u} \right) + \bar{\beta}\left( e^{u} + \alpha \right)}{\sqrt{\left( 2e^{u} + \alpha + \bar{\alpha} \right)\left( e^{2u} - \alpha\bar{\alpha} \right) }} \right], \\
        &q = \la e_{4z}, e_{3} \ra = i \left[ \frac{\frac{1}{2}e^{u}\left( \alpha_{z} - \bar{\alpha}_{z} \right) + \frac{1}{2}\left( \alpha_{z}\bar{\alpha} - \bar{\alpha}_{z}\alpha \right) - e^{u}\phi\left( 2e^{u} + \bar{\alpha} + \alpha \right) - u_{z}e^{u}\left( e^{u} + \alpha \right)}{\left( 2e^{u} + \alpha + \bar{\alpha} \right)\sqrt{e^{2u} - \alpha\bar{\alpha}}} \right], 
		\end{aligned}
		\right.	
\end{equation*}
  and 
  \begin{equation}\label{complex function}
\beta := \la \mathfrak{f}_{z}, \mathfrak{f}_{\bar{z}} \ra, \quad \phi := e^{-u} \la \mathfrak{f}_{z\bar{z}}, \overline{\mathfrak{f}_{\bar{z}}}\ra.
  \end{equation}
From the above expressions, it is easy to see that 
 $\alpha \, \text{d}z^2$, $\beta  \, \text{d}z \text{d}\bar z$ and $\phi \, \text{d}z$
 are well-defined differentials for the horizontal lift $\mathfrak f$. Moreover 
 it is known that 
 \[
 \Phi= \phi \, \text{d}z
 \]
 characterizes minimality of the surface, i.e., $\Phi =0$ if and only if the surface 
 $f$ is minimal \cite[Theorem 3.2]{WX}.

It is clear that the compatibility condition of $\mathcal F$, that is, $\mathcal{F}_{z\bar{z}} = \mathcal{F}_{\bar{z}z}$ is equivalent to $\mathcal{U}_{\bar{z}} - \mathcal{V}_{z} = \left[ \mathcal{U}, \mathcal{V} \right]$, which is also equivalent to the Maurer-Cartan equation $\text{d}\omega + \frac{1}{2}\left[ \omega \wedge \omega \right] = 0$. Moreover, it can also be interpreted as a flatness of the connection $\text{d} + \omega$. 
Then a straightforward computation shows that the Maurer-Cartan equation $\text{d}\omega + \frac{1}{2}\left[ \omega \wedge \omega \right] = 0$ is equivalent to the following equations:
 \begin{subnumcases}{}
	&$p_{1}\bar{p}_{3} + p_{2}\bar{p}_{4} = \bar{p}_{1}p_{3} + \bar{p}_{2}p_{4}$, \label{eq:M-C1}\\
	&$\bar{p}_{2}q - p_{2}\bar{q} = p_{1\bar{z}} - \bar{p}_{1z}$, \label{eq:M-C2}\\
	&$p_{1}\bar{q} - \bar{p}_{1}q = p_{2\bar{z}} - \bar{p}_{2z}$, \label{eq:M-C3}\\
        &$\bar{p}_{4}q - p_{4}\bar{q} = p_{3\bar{z}} - \bar{p}_{3z}$, \label{eq:M-C4}\\
        &$p_{3}\bar{q} - \bar{p}_{3}q = p_{4\bar{z}} - \bar{p}_{4z}$, \label{eq:M-C5}\\
        &$p_{2}\bar{p}_{1} + p_{4}\bar{p}_{3} - \bar{p}_{2}p_{1} - \bar{p}_{4}p_{3} = q_{\bar{z}} - \bar{q}_{z}$.  \label{eq:M-C6}
\end{subnumcases}
By substituting $p_{1}, p_{2}, p_{3}, p_{4}$ and $q$, \eqref{eq:M-C1} can be simplified as follows:
\begin{equation}\label{eq:alphabeta}
        e^{2u} - \beta\bar{\beta} = \alpha \bar{\alpha}.
    \end{equation}
From the above equation, the assumption $e^{2u} - \alpha \bar{\alpha} >  0$ is equivalent to $|\beta| > 0$.
And \eqref{eq:M-C2} and \eqref{eq:M-C4} are equivalent to the following equation: 
    \begin{equation*}
        \frac{\sqrt{2}}{2}\left( \bar{p}_{2} - i\bar{p}_{4} \right)q + \frac{\sqrt{2}}{2}\left( \bar{p}_{1} - i\bar{p}_{3} \right)_{z} = \frac{\sqrt{2}}{2}\left( p_{1} - ip_{3} \right)_{\bar{z}} + \frac{\sqrt{2}}{2}\left( p_{2} - ip_{4} \right)\bar{q}.
    \end{equation*}
    By substituting $p_{1}, p_{2}, p_{3}, p_{4}$ and $q$, we have
    \begin{equation}
        \frac{1}{2}\bar{\alpha}_{\bar{z}}\alpha - e^{2u}\bar{\phi} - u_{\bar{z}}e^{2u} + \beta \bar{\beta}_{\bar{z}} = \frac{1}{2} \bar{\alpha}_{z}\beta. \label{eq: condition 5}
    \end{equation}
    Similarly, \eqref{eq:M-C3} and \eqref{eq:M-C5} are equivalent to the following equation:
    \begin{equation*}
        \frac{\sqrt{2}}{2}\left( p_{1} - ip_{3} \right)\bar{q} - \frac{\sqrt{2}}{2}\left( p_{2} - ip_{4} \right)_{\bar{z}} = \frac{\sqrt{2}}{2}\left( \bar{p}_{1} - i\bar{p}_{3} \right)q - \frac{\sqrt{2}}{2}\left( \bar{p}_{2} - i\bar{p}_{4} \right)_{z}.
    \end{equation*}
    By substituting $p_{1}, p_{2}, p_{3}, p_{4}$ and $q$, we obtain 
   \begin{equation}
        \phi \beta = \bar{\phi} \alpha + \frac{1}{2} \alpha_{\bar{z}}. \label{eq: condition 6}
    \end{equation}
 Note that \eqref{eq:M-C6} can also be expressed by the function $\alpha$, $\beta$ and 
 $\phi$, however, the expression is rather involved and will not be used later, 
 so we omit it here.
 
\subsection{Minimality of a Lagrangian surface and the family of flat connections}\label{sbsc: Minimality}
Since $Q_2$ is a symmetric space as in \eqref{eq:symQ2}
and minimal surfaces in $Q_2$
can be thought as conformal harmonic maps, thus the integrable system method can be applied.
We consider the following family of connection 1-forms $\text{d} + \omega^{\lambda}$:
 \begin{equation}\label{lambda family}
     \omega^{\lambda} = \lambda^{-1}\omega^{'}_{\mathfrak{p}} + \omega_{\mathfrak{k}} + \lambda\omega^{''}_{\mathfrak{p}}, \quad 
     (\lambda \in \mathbb{S}^{1}),
 \end{equation}
where $\mathfrak{g} = \operatorname{Lie}\left( \rm{SO}(4) \right)= \mathfrak{so}(4)$ can be decomposed as 
\[
\mathfrak{g} = \mathfrak{k} \oplus \mathfrak{p}
\]
with the fixed point subalgebra $\mathfrak{k} = \operatorname{Fix} (d \sigma)=  \mathfrak{so}(2) \times \mathfrak{so}(2)$ and its complement $\mathfrak p$, and $\omega_{\mathfrak{k}}$ and $\omega_{\mathfrak{p}}$ are the $\mathfrak{k}$- and the $\mathfrak{p}$-valued 1-forms.
Moreover $\prime$ and $\prime \prime$ denote the $(1,0)$- and the $(0,1)$-parts, respectively. Since the flatness of the connection $\text{d} + \omega$ is only the flatness of the family $\text{d} + \omega^{\lambda}$ at $\lambda = 1$, the flatness of a family $d + \omega^{\lambda}$ parameterized by $\lambda \in \mathbb{S}^1$ gives 
 an additional constraint, that is, harmonicity, for a Lagrangian surface $f$. 

More explicitly, \eqref{lambda family} can be written as follows:
\begin{equation*}
    \omega^{\lambda} = \mathcal{U}^{\lambda}\text{d}z + \mathcal{V}^{\lambda}\text{d}\bar{z},
\end{equation*}
where \begin{equation*}
     \mathcal{U}^{\lambda} = \begin{pmatrix}
 0 & 0 & \lambda^{-1}p_{1} & \lambda^{-1}p_{2} \\
 0 & 0 & \lambda^{-1}p_{3} & \lambda^{-1}p_{4} \\
 -\lambda^{-1}p_{1} & -\lambda^{-1}p_{3} & 0 & q \\
 -\lambda^{-1}p_{2} & -\lambda^{-1}p_{4} & -q & 0
\end{pmatrix}, \quad \mathcal{V}^{\lambda} = \begin{pmatrix}
 0 & 0 & \lambda\bar{p}_{1} & \lambda\bar{p}_{2} \\
 0 & 0 & \lambda\bar{p}_{3} & \lambda\bar{p}_{4} \\
 -\lambda\bar{p}_{1} & -\lambda\bar{p}_{3} & 0 & \bar{q} \\
 -\lambda\bar{p}_{2} & -\lambda\bar{p}_{4} & -\bar{q} & 0
\end{pmatrix}.
 \end{equation*}
The condition $\text{d}\omega^{\lambda} + \frac{1}{2}[ \omega^{\lambda} \wedge \omega^{\lambda} ] = 0$ is now equivalent to $\mathcal{U}^{\lambda}_{\bar{z}} - \mathcal{V}^{\lambda}_{z} = [ \mathcal{U}^{\lambda} , \mathcal{V}^{\lambda} ]$, and it is equivalent to the following equations:
\begin{subnumcases}{}
	&$p_{1}\bar{p}_{3} + p_{2}\bar{p}_{4} = \bar{p}_{1}p_{3} + \bar{p}_{2}p_{4}$, \label{eq:l M-C1}\\
	&$\lambda\bar{p}_{2}q-\lambda^{-1}p_{2}\bar{q} = \lambda^{-1}p_{1\bar{z}} - \lambda\bar{p}_{1z}$, \label{eq:l M-C2}\\
	&$\lambda^{-1}p_{1}\bar{q} - \lambda\bar{p}_{1}q = \lambda^{-1}p_{2\bar{z}} -\lambda\bar{p}_{2z}$, \label{eq:l M-C3}\\
        &$\lambda\bar{p}_{4}q - \lambda^{-1}p_{4}\bar{q} = \lambda^{-1}p_{3\bar{z}} - 
    \lambda\bar{p}_{3z}$, \label{eq:l M-C4}\\
        &$\lambda^{-1}p_{3}\bar{q} - \lambda\bar{p}_{3}q = \lambda^{-1}p_{4\bar{z}} -\lambda\bar{p}_{4z}$, \label{eq:l M-C5}\\
        &$p_{2}\bar{p}_{1} + p_{4}\bar{p}_{3} - \bar{p}_{2}p_{1} - \bar{p}_{4}p_{3} = q_{\bar{z}} - \bar{q}_{z}$.  \label{eq:l M-C6}
\end{subnumcases}
First, \eqref{eq:l M-C1} is just \eqref{eq:M-C1} and it is 
$e^{2u} - \beta\bar{\beta} = \alpha \bar{\alpha}$. 
    Next, \eqref{eq:l M-C2} and \eqref{eq:l M-C4} are equivalent to the following equations:
 \begin{equation*}
\frac{\sqrt{2}}{2}\left( \bar{p}_{2} - i\bar{p}_{4} \right)q = -\frac{\sqrt{2}}{2}\left( \bar{p}_{1} - i\bar{p}_{3} \right)_{z}, \quad 
\frac{\sqrt{2}}{2}\left( \bar{p}_{2} + i\bar{p}_{4} \right)q = -\frac{\sqrt{2}}{2}\left( \bar{p}_{1} + i\bar{p}_{3} \right)_{z}. 
\end{equation*}
    By substituting $p_{1}, p_{2}, p_{3}, p_{4}$ and $q$, the following equations are obtained:
 \begin{align}
		&\frac{1}{2}\bar{\alpha}_{z}= e^{u}\phi, \label{eq: condition 1}\\
        &\frac{1}{2}\bar{\alpha}\left( \bar{\alpha}_{z} - \alpha_{z} \right) + e^{u}\bar{\alpha}_{z} + u_{z}e^{2u} = \bar{\beta} \beta_{z} \label{eq: condition 2}. 
  \end{align} 
    Similarly, \eqref{eq:l M-C3} and \eqref{eq:l M-C5} are equivalent to the following equations:
 \begin{equation*}
\frac{\sqrt{2}}{2}\left( \bar{p}_{1} + i\bar{p}_{3} \right)q = \frac{\sqrt{2}}{2}\left( \bar{p}_{2} + i\bar{p}_{4} \right)_{z}, \quad
\frac{\sqrt{2}}{2}\left( \bar{p}_{1} - i\bar{p}_{3} \right)q = \frac{\sqrt{2}}{2}\left( \bar{p}_{2} - i\bar{p}_{4} \right)_{z}. 
\end{equation*}
    By substituting $p_{1}, p_{2}, p_{3}, p_{4}$ and $q$, we have $\bar{\alpha}_{z} = 0$. Thus \eqref{eq: condition 1} and \eqref{eq: condition 2} can be simplified as follows:
     \begin{align}
		&\alpha_{\bar{z}} = \bar{\alpha}_{z} = \phi = 0, \label{eq: condition 3} \\
        &\frac{1}{2}\bar{\alpha}\alpha_{z} + \bar{\beta}\beta_{z} - u_{z}e^{2u} = 0. \label{eq: condition 4}
  \end{align} 
In addition, the equation \eqref{eq:l M-C6} can be simplified as follows:
\begin{equation}\label{eq: second order PDE}
    u_{z\bar{z}}e^{u}|\beta|^{2} - \frac{1}{4}|\alpha_{z}|^{2} e^{u} - |u_{z}|^{2} e^{u} |\alpha|^{2}  + \frac{1}{2} \alpha_{z} u_{\bar{z}} e^{u} \bar{\alpha} + \frac{1}{2}\bar{\alpha}_{\bar{z}} u_{z} e^{u} \alpha + 2|\beta|^{4} = 0.
\end{equation}
Thus \eqref{eq:l M-C1}-\eqref{eq:l M-C6} can be simplified to the following system of equations:
 \begin{align}\label{eq: M-C}
	\left\{
		\begin{aligned}
		& e^{2u} - \beta\bar{\beta} = \alpha \bar{\alpha}, \\
		&\alpha_{\bar{z}} = \bar{\alpha}_{z} = \phi = 0, \\
        &\frac{1}{2}\bar{\alpha}\alpha_{z} + \bar{\beta}\beta_{z} - u_{z}e^{2u} = 0, \\
        & u_{z\bar{z}}e^{u}|\beta|^{2} - \frac{1}{4}|\alpha_{z}|^{2} e^{u} - |u_{z}|^{2} e^{u} |\alpha|^{2}  + \frac{1}{2} \alpha_{z} u_{\bar{z}} e^{u} \bar{\alpha} + \frac{1}{2}\bar{\alpha}_{\bar{z}} u_{z} e^{u} \alpha + 2|\beta|^{4} = 0.
		\end{aligned}
		\right.	
\end{align}

Without loss of generality, we assume that $\alpha$ and $\beta$ are not identically zero,
by the following Lemma: 
\begin{Lemma}\label{lem:totally}
When $\alpha \equiv 0$ (resp. $\beta \equiv 0$), then the minimal Lagrangian surface is 
the totally geodesic Lagrangian sphere (resp. totally geodesic flat Lagrangian torus).
\end{Lemma}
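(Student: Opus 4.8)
The plan is to analyze the simplified system \eqref{eq: M-C} under each degeneracy hypothesis and extract enough geometric information to identify the surface. Throughout I use that $f$ minimal is equivalent to the flatness of $\mathrm{d}+\omega^\lambda$ for all $\lambda$ (Theorem~\ref{Thm1}), so \eqref{eq: M-C} holds, together with the auxiliary relations \eqref{eq: condition 5} and \eqref{eq: condition 6} deduced earlier.

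\textbf{Case $\alpha\equiv 0$.} From the first equation of \eqref{eq: M-C} we get $e^{2u}=|\beta|^2$, so $|\beta|=e^{u}>0$ (using the standing assumption that the frame $\mathcal F$ is well-defined, or, if not, handling the locus $|\beta|=0$ separately as in Remark~\ref{Rm:assumption}); in particular $\beta$ is nowhere zero. The third equation reduces to $\bar\beta\beta_z = u_z e^{2u}$, while writing $\beta = e^{u}e^{i\varphi}$ and using $|\beta|^2=e^{2u}$ shows $\bar\beta\beta_z = e^{2u}(u_z + i\varphi_z)$; comparing forces $\varphi_z=0$, and since $\arg\beta$ is constant (Theorem~\ref{Thm1}) we may take $\varphi$ constant and, after the normalized horizontal lift, $\beta = e^{u}>0$. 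Feeding $\alpha\equiv 0$, $|\beta|^2 = e^{2u}$ into the last equation of \eqref{eq: M-C} gives $u_{z\bar z}e^{3u} + 2e^{4u}=0$, i.e. $u_{z\bar z} = -2e^{u}$: the induced metric $2e^{u}\,\mathrm{d}z\mathrm{d}\bar z$ then has constant Gauss curvature $K = -e^{-u}u_{z\bar z} = 2$. A simply-connected minimal Lagrangian surface of constant curvature $2$ in $Q_2\cong \mathbb S^2\times\mathbb S^2$ (with each factor of curvature $4$) must be totally geodesic, since its second fundamental form is parallel and a Gauss-equation computation pins it down; invoking the classification of such surfaces (Castro--Urbano \cite{CU}, cf. \cite{WX2015}) identifies it with the totally geodesic Lagrangian sphere $\boldsymbol M_0$.

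\textbf{Case $\beta\equiv 0$.} Now the first equation of \eqref{eq: M-C} gives $e^{2u}=|\alpha|^2$, so $\alpha$ is nowhere zero and $|\alpha| = e^{u}$; moreover $\alpha$ is holomorphic by the second equation. Differentiating $|\alpha|^2 = e^{2u}$ yields $\bar\alpha\alpha_z = e^{2u}u_z$ (and its conjugate), so the third equation of \eqref{eq: M-C} is automatically satisfied, and substituting $\beta\equiv 0$, $|\alpha|^2=e^{2u}$, $\bar\alpha\alpha_z = e^{2u}u_z$ into the last equation gives, after simplification, $-\tfrac14|\alpha_z|^2 e^{u} - |u_z|^2 e^{3u} + |u_z|^2 e^{3u} = -\tfrac14 |\alpha_z|^2 e^{u} = 0$, whence $\alpha_z\equiv 0$. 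Thus $\alpha$ is a nonzero constant (up to a holomorphic change of coordinate, $\alpha\equiv 1$), $e^{u}=|\alpha|$ is constant, the metric $2e^{u}\mathrm{d}z\mathrm{d}\bar z$ is flat, and $u_{z\bar z}=0$ gives $K=0$. A complete flat minimal Lagrangian surface in $Q_2$ is totally geodesic and, by \cite{CU}, is the totally geodesic flat Lagrangian torus $\boldsymbol T$ (locally, the immersion has parallel second fundamental form equal to the obvious one read off from $\mathcal U$).

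\textbf{Main obstacle.} The purely local differential-algebraic part---deriving $K=2$ (resp. $K=0$) and showing the second fundamental form is parallel from \eqref{eq: M-C} and the explicit $p_i$, $q$---is routine. The real content is the step from "constant curvature, parallel second fundamental form, minimal, Lagrangian" to "totally geodesic and globally $\boldsymbol M_0$ (resp. $\boldsymbol T$)": this needs the classification of minimal Lagrangian surfaces of constant curvature in $\mathbb S^2\times\mathbb S^2\cong Q_2$, which I would quote from Castro--Urbano \cite{CU} (and the parallel-second-fundamental-form classification \cite{WX2015}) rather than reprove. A secondary subtlety is that the frame $\mathcal F$, hence the formulas \eqref{eq: M-C}, were derived under $e^{2u}-\alpha\bar\alpha>0$, equivalently $|\beta|>0$; in the case $\beta\equiv 0$ one must instead argue directly with an alternative frame (as promised in Remark~\ref{Rm:assumption}), or observe that the conclusion $\alpha\equiv$ const, $u$ constant can be reached on the open dense set where $\mathcal F$ is defined and then extended by continuity.
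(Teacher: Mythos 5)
Your overall strategy coincides with the paper's: in each degenerate case compute the Gauss curvature from the structure equations ($K=2$ when $\alpha\equiv 0$, $K=0$ when $\beta\equiv 0$) and then invoke Castro--Urbano's classification of minimal Lagrangian surfaces in $\mathbb S^2\times\mathbb S^2$ (the paper cites Theorem 4.3(1) of \cite{CU}). The case $\alpha\equiv 0$ is correct and matches the paper.

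However, the case $\beta\equiv 0$ contains a computational error that invalidates the step you rely on. Differentiating $\alpha\bar\alpha=e^{2u}$ in $z$ and using $\bar\alpha_z=0$ gives $\bar\alpha\,\alpha_z = 2u_z e^{2u}$, i.e.\ $\alpha_z = 2u_z\alpha$ --- not $\bar\alpha\,\alpha_z = e^{2u}u_z$ as you wrote. With the correct factor of $2$ one finds $|\alpha_z|^2 = 4|u_z|^2e^{2u}$ and each cross term $\tfrac12\alpha_z u_{\bar z}e^u\bar\alpha = |u_z|^2e^{3u}$, so the last equation of \eqref{eq: M-C} reduces to $-|u_z|^2e^{3u}-|u_z|^2e^{3u}+|u_z|^2e^{3u}+|u_z|^2e^{3u}=0$: it is identically satisfied and yields no information. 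In particular your conclusions ``$\alpha_z\equiv 0$'', ``$\alpha$ constant'', ``$u$ constant'' do not follow and are false in general (e.g.\ $\alpha=e^{2z}$, $u=z+\bar z$ is consistent with $e^{2u}=|\alpha|^2$ and $\alpha$ holomorphic). The correct route to $K=0$, which is the one the paper takes, is: since $\alpha$ is holomorphic, $0=\alpha_{z\bar z}=(2u_z\alpha)_{\bar z}=2u_{z\bar z}\alpha+2u_z\alpha_{\bar z}=2u_{z\bar z}\alpha$, and $\alpha\neq 0$ forces $u_{z\bar z}=0$, hence $K=-e^{-u}u_{z\bar z}=0$. (Equivalently, $u=\tfrac12\log|\alpha|^2$ is harmonic off the zero set of $\alpha$.) Once $K=0$ is established this way, the appeal to \cite{CU} goes through as you intended.
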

\begin{proof}
When $\alpha \equiv 0$, the equation \eqref{eq: second order PDE} can be simplified to 
\begin{equation}\label{eq: Gauss curvature of alpha}
    u_{z\bar{z}} + 2e^{u} = 0,
\end{equation}
which is known as Liouville's equation,  see \cite{L1838}.
Since the induced metric on $\mathbb{D}$ is given by $ds^{2}_{M} = 2e^{u}\text{d}z\text{d}\bar{z}$, the Gaussian curvature $K$ can be computed by $K = -e^{-u}u_{z\bar{z}}$.
Then by \eqref{eq: Gauss curvature of alpha}, the Gaussian curvature $K = 2$ when $\alpha \equiv 0$.

On the contrary, when $\beta \equiv 0$, we have $\alpha(z) \ne 0$ by $e^{2u} = \alpha\bar{\alpha}$. Notice that $\alpha_{z} = 2u_{z}\alpha$, so we have
$\alpha_{z\bar{z}} = 2u_{z\bar{z}}\alpha + 2u_{z}\alpha_{\bar{z}} = 2u_{z\bar{z}}\alpha = 0$.
Thus the Gaussian curvature $K = 0$ when $\beta \equiv 0$. 

 It is well known that $Q_{2}$ is isometric to the product space $\mathbb{S}^{2} \times \mathbb{S}^{2}$, and  
 using Theorem 4.3 (1)  \cite{CU} for minimal Lagrangian surfaces in $\mathbb{S}^{2} \times \mathbb{S}^{2}$, then $f(M)$ is congruent to some open subset of the totally geodesic Lagrangian sphere when $\alpha \equiv 0$ or totally geodesic flat Lagrangian torus when $\beta \equiv 0$, see \cite{CU} in detail.
\end{proof}
 
\begin{Remark}
 The last PDE of \eqref{eq: M-C} is an involved second-order equation with respect to $u$, however, 
 introducing a function $\hat u$ such that $2 e^u = e^{\hat u } + |\alpha|^2 e^{-\hat u}$ holds, it becomes 
 the elliptic sinh-Gordon equation, as we will see in Proposition \ref{prp:corrsinh}.
 \end{Remark}

We now characterize minimal Lagrangian surfaces in terms of the family of flat
connections $\text{d} + \omega^{\lambda}$.
\begin{Theorem}\label{Thm1}
    Let $f : M \to Q_{2}$ be a Lagrangian immersion and let $\text{d} + \omega^{\lambda}$ be the family of connections in \eqref{lambda family}. Then the following statements are equivalent:  
    \begin{enumerate}
        \item The Lagrangian immersion $f$ is minimal.
        \item The connections $\text{d} + \omega^{\lambda}$ are flat for all $\lambda \in \mathbb{S}^{1}$, i.e. \eqref{eq:l M-C1}-\eqref{eq:l M-C6} holds for all $\lambda \in \mathbb{S}^{1}$.
        \item The quadratic differential $\alpha \, \mathrm{d} z^2$ is holomorphic and $\varphi = \arg{(\beta)}$ is constant, where $\alpha, \beta$ are defined in \eqref{eq:alpha}, \eqref{complex function} and $\arg{(\beta)}$ denotes the argument of $\beta$.
    \end{enumerate}
\end{Theorem}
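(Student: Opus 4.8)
The plan is to prove the cycle of implications $(1)\Rightarrow(2)\Rightarrow(3)\Rightarrow(1)$, taking as given the three facts already assembled above: the flatness of $\mathrm d+\omega$ (the $\lambda=1$ member of \eqref{lambda family}) holds unconditionally, because $\mathcal F$ is a genuine $\mathrm{SO}(4)$-frame, and is equivalent to \eqref{eq:alphabeta}, \eqref{eq: condition 5}, \eqref{eq: condition 6} together with the ($\lambda$-independent) equation \eqref{eq:M-C6}; the flatness of $\mathrm d+\omega^\lambda$ for \emph{all} $\lambda\in\mathbb S^1$ is equivalent to the system \eqref{eq: M-C}; and $\phi\equiv0$ if and only if $f$ is minimal \cite[Theorem 3.2]{WX}. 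Note also that, since $\mathcal F$ is well defined, \eqref{eq:alphabeta} gives $|\beta|^2=e^{2u}-|\alpha|^2>0$, so $\varphi=\arg\beta$ is well defined and locally smooth on the connected domain $\mathbb D$, and ``$\varphi$ constant'' is a meaningful condition.

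For $(1)\Rightarrow(2)$ I would start from $\phi\equiv0$: then \eqref{eq: condition 6} forces $\alpha_{\bar z}=0$ (so $\alpha\,\mathrm dz^2$ is holomorphic and $\bar\alpha_z=0$), \eqref{eq: condition 5} collapses to $\tfrac12\bar\alpha_{\bar z}\alpha-u_{\bar z}e^{2u}+\beta\bar\beta_{\bar z}=0$, whose complex conjugate is exactly \eqref{eq: condition 4}, and the unconditionally valid \eqref{eq:M-C6} reduces to \eqref{eq: second order PDE} once $\phi=0$ and $\alpha_{\bar z}=0$ are inserted; together with the algebraic identity \eqref{eq:alphabeta} this is precisely the system \eqref{eq: M-C}, hence $\mathrm d+\omega^\lambda$ is flat for every $\lambda$.

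For $(2)\Rightarrow(3)$: the system \eqref{eq: M-C} contains $\alpha_{\bar z}=0$, so $\alpha\,\mathrm dz^2$ is holomorphic. To extract that $\arg\beta$ is constant, I would differentiate $e^{2u}=|\alpha|^2+|\beta|^2$ with respect to $z$ and use $\bar\alpha_z=0$ to write $u_ze^{2u}=\tfrac12\bar\alpha\alpha_z+\tfrac12(\beta_z\bar\beta+\beta\bar\beta_z)$; inserting this into \eqref{eq: condition 4} makes the $\alpha$-terms cancel and leaves $\bar\beta\beta_z=\beta\bar\beta_z$. Writing $\beta=|\beta|e^{i\varphi}$ turns this into $2i|\beta|^2\varphi_z=0$, so $\varphi_z=0$; since $\varphi$ is real-valued this also gives $\varphi_{\bar z}=\overline{\varphi_z}=0$, hence $\varphi$ is constant on $\mathbb D$. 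For $(3)\Rightarrow(1)$, assuming $\alpha_{\bar z}=\bar\alpha_z=0$ and $\varphi$ constant, \eqref{eq: condition 5} becomes $\tfrac12\bar\alpha_{\bar z}\alpha-e^{2u}\bar\phi-u_{\bar z}e^{2u}+\beta\bar\beta_{\bar z}=0$, and replacing $u_{\bar z}e^{2u}$ via the $\bar z$-derivative of $e^{2u}=|\alpha|^2+|\beta|^2$ (again using $\alpha_{\bar z}=0$) cancels the $\alpha$-terms and yields $e^{2u}\bar\phi=\tfrac12(\beta\bar\beta_{\bar z}-\bar\beta\beta_{\bar z})=-i|\beta|^2\varphi_{\bar z}=0$; hence $\phi\equiv0$ and $f$ is minimal by \cite[Theorem 3.2]{WX}, closing the cycle.

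I do not expect a genuine obstacle here; each step is a short calculation once the right substitution pattern is spotted. The one point demanding care is that none of the three ``frame'' relations \eqref{eq:alphabeta}, \eqref{eq: condition 5}, \eqref{eq: condition 6} on its own encodes minimality, so at each step one must combine them with the hypothesis and systematically use $e^{2u}=|\alpha|^2+|\beta|^2$ to trade $u$-derivatives for derivatives of $|\alpha|^2$ and $|\beta|^2$, isolating the combination $\beta\bar\beta_{\bar z}-\bar\beta\beta_{\bar z}=-2i|\beta|^2\varphi_{\bar z}$; one must also keep straight which equations are valid unconditionally (the $\lambda=1$ flatness of $\mathcal F$) and which only under the stated assumptions.
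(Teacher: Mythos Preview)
Your proposal is correct and follows essentially the same route as the paper: the paper proves $(1)\Leftrightarrow(2)$ and then $(2)\Rightarrow(3)\Rightarrow(1)$, using the identical substitutions you outline---extracting $\alpha_{\bar z}=0$ from \eqref{eq: condition 6} when $\phi=0$, differentiating $e^{2u}=|\alpha|^2+|\beta|^2$ to trade $u$-derivatives for $\alpha$- and $\beta$-derivatives, and isolating $\beta\bar\beta_{\bar z}-\bar\beta\beta_{\bar z}=-2i|\beta|^2\varphi_{\bar z}$ in both the $(2)\Rightarrow(3)$ and $(3)\Rightarrow(1)$ steps. The only cosmetic differences are that the paper works with the $\bar z$-derivative where you use the $z$-derivative (conjugate versions of the same identity), and that it records the trivial implication $(2)\Rightarrow(1)$ separately rather than folding it into the cycle.
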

\begin{proof}
    We would like to prove this theorem by (1) $\Rightarrow$ (2) $\Rightarrow$ (3) $\Rightarrow$ (1).
    
    (1) $\Rightarrow$ (2): 
    We assume $\Phi = \phi\, \text{d}z = 0$. Thus the equation \eqref{eq: condition 6} can be simplified to \eqref{eq: condition 3} and the equation \eqref{eq: condition 5} can be simplified to \eqref{eq: condition 4}.
    Thus \eqref{eq:l M-C1}-\eqref{eq:l M-C6} holds for all $\lambda \in \mathbb{S}^{1}$.

    (2) $\Rightarrow$ (3): 
    Note that $\alpha_{\bar{z}} = 0$ in \eqref{eq: M-C}.
    By $e^{2u} - \beta \bar{\beta} = \alpha \bar{\alpha}$ and $\alpha_{\bar{z}} = 0$, we have
    \begin{equation}\label{eq: alpha}
         2 u_{\bar{z}}e^{2u} = \alpha \bar{\alpha}_{\bar{z}} +  \beta_{\bar{z}} \bar{\beta} + \beta\bar{\beta}_{\bar{z}}.      
    \end{equation}
    Thus the third equation of \eqref{eq: M-C}, $\frac{1}{2}\bar{\alpha}\alpha_{z} + \bar{\beta}\beta_{z} - u_{z}e^{2u} = 0$, can be simplified to
    \begin{equation}\label{eq: beta}
        \beta \bar{\beta}_{\bar{z}} = \beta_{\bar{z}}\bar{\beta}.
    \end{equation}
    And if we put $\beta = |\beta|e^{i\varphi}$, then \eqref{eq: beta} can be simplified to $2i\varphi_{\bar{z}}|\beta|^{2} = 0$. Since $\beta$ is not identically zero, we obtain $\varphi_{\bar{z}} = 0$.
    Since $\varphi$ takes values in $\mathbb{R}$, it follows that  $\varphi$ is constant. 
    
    (3) $\Rightarrow$ (1): 
    By $\alpha_{\bar{z}} = 0$, we still have \eqref{eq: alpha}. 
    Since $\varphi$ is constant, it is easy to check $\bar{\beta}_{\bar{z}}\beta = \bar{\beta}\beta_{\bar{z}} = |\beta||\beta|_{\bar{z}}$, and then \eqref{eq: alpha} becomes 
    \begin{equation}
        u_{\bar{z}}e^{2u} = \frac{1}{2}\alpha\bar{\alpha}_{\bar{z}} + |\beta||\beta|_{\bar{z}}.
    \end{equation}
     If we substitute this into the equation \eqref{eq: condition 5}, then we obtain $\phi = 0$. 
\end{proof}
We now consider a new local horizontal lift of a minimal Lagrangian surface $f$:
\begin{equation}\label{eq:newlift}
\hat{\mathfrak{f}} = e^{-\frac{i\varphi}{2}}\mathfrak{f},
\end{equation}
 where $\varphi(z, \bar{z}) = \arg(\beta)$ is constant with respect to $z$ and $\bar{z}$ by Theorem \ref{Thm1}. It is easy to check that $\hat{\mathfrak{f}}$ satisfies the horizontal condition,
and the new complex functions $\hat{\alpha}$ and $\hat{\beta}$ can be given by
\begin{equation}
    \hat{\alpha} := \la  \hat{\mathfrak{f}}_{z}, \hat{\mathfrak{f}}_{z}  \ra = e^{-i\varphi}\alpha, \quad \hat{\beta} := \la \hat{\mathfrak{f}}_{z},\hat{\mathfrak{f}}_{\bar{z}} \ra = |\beta|,
\end{equation}
i.e. $|\hat{\alpha}| = |\alpha|$, and $ \hat{\beta}$ is a non-negative real function. 
By the relation $e^{2u} - |\alpha|^{2} = |\beta|^{2}$, we obtain that 
\begin{equation}\label{eq:hatbeta}
\hat{\beta} = \sqrt{e^{2u} - |\hat{\alpha}|^{2}},
\end{equation}
 i.e., the function $\hat{\beta}$ is completely determined by $u$ and $\hat \alpha$.
Note that $\hat \alpha$ is a holomorphic function for a minimal Lagrangian surface.
All the data in Maurer-Cartan form now can be represented by $u$ and $\hat \alpha$,
and the new complex functions $\hat{p}_{1}, \hat{p}_{2}, \hat{p}_{3}, \hat{p}_{4}, \hat{q}$ 
are introduced as follows:
\begin{equation*}
		\hat{p}_{1} := -\frac{1}{\sqrt{2}}\left( \frac{\hat{\alpha} + e^{u} + \hat{\beta} }{\sqrt{2e^{u} + \hat{\alpha} + \bar{\hat{\alpha}} }} \right), \,
		\hat{p}_{2} := -\frac{i}{\sqrt{2}}\left( \frac{\hat{\alpha} + e^{u}  + \hat{\beta}}{\sqrt{2e^{u} + \hat{\alpha} + \bar{\hat{\alpha}} }} \right), \,
        \hat{p}_{3} := \frac{i}{\sqrt{2}}\left( \frac{\hat{\alpha} + e^{u} - \hat{\beta} }{\sqrt{2e^{u} + \hat{\alpha} + \bar{\hat{\alpha}} }} \right),
\end{equation*}
\begin{equation*}     
        \hat{p}_{4} := \frac{1}{\sqrt{2}}\left( \frac{\hat{\alpha} + e^{u} - \hat{\beta} }{\sqrt{2e^{u} + \hat{\alpha} + \bar{\hat{\alpha}} }} \right), \quad    \hat{q} :=  i \left( \frac{e^{-u}\frac{1}{2}\hat{\alpha}_{z}\hat{\beta} - e^{-u} \hat{\alpha} \hat{\beta}_{z} - \hat{\beta}_{z}}{2e^{u} + \hat{\alpha} + \bar{\hat{\alpha}} } \right).
\end{equation*}
\begin{Remark}\label{Rm:assumption}
Since the factor $e^{2u}- \alpha \bar \alpha$ in the denominator of 
$p_1, p_2, p_3, p_4$ and $q$ can be eliminated 
 by the same factor in the numerator, thus 
 $\hat{p}_{1}, \hat{p}_{2}, \hat{p}_{3},\hat{p}_{4}$ and $\hat{q}$ are well-defined even when 
 $e^{2u}- \alpha \bar \alpha = 0$. Therefore, we do not need to assume the condition 
 $e^{2u}- \alpha \bar \alpha > 0$ for any minimal Lagrangian surface.
\end{Remark}
\subsection{Minimal Lagrangian surfaces and the elliptic sinh-Gordon equation}
 To simplify the Maurer-Cartan form, we now introduce complex functions $p$ and $r$ as follows:
\begin{equation}\label{eq:pr}
    p:= \frac{1}{\sqrt{2}}\left( \frac{\hat{\alpha}+e^{u}-\hat{\beta}}{\sqrt{2e^{u} + \hat{\alpha} + \bar{\hat{\alpha}} }} \right), \quad r:= -\frac{1}{\sqrt{2}} \left( \frac{\hat{\alpha} + e^{u} + \hat{\beta} }{\sqrt{2e^{u} + \hat{\alpha} + \bar{\hat{\alpha}} }}\right).
\end{equation}
Namely, we have 
\begin{equation}
    \hat{p}_{1} = r,\quad \hat{p}_{2} = ir,\quad \hat{p}_{3} = ip,\quad \hat{p}_{4} = p.
\end{equation}

\begin{Definition}\label{def:extend}
 Denote the new frame of the lift $\hat{\mathfrak f}$ in \eqref{eq:newlift}
  of a minimal Lagrangian immersion by $\hat{\mathcal F}$. 
 Then by Theorem \ref{Thm1}, there exists a family of 
 frames $\hat{\mathcal{F}}_{\lambda}$ such that $\hat{\mathcal{F}}_{\lambda}|_{\lambda=1} = \hat{\mathcal F}$, and we call $\hat{\mathcal F}_{\lambda}$
 the \textit{extended frame}.
\end{Definition}
The new family of connection 1-forms $\text{d} + \hat{\omega}^{\lambda}$ parameterized by $\lambda \in \mathbb{S}^{1}$ can be explicitly written as follows:
\begin{equation}\label{eq: M-C hat omega}
    \hat{\omega}^{\lambda} = \hat{\mathcal{F}}_{\lambda}^{-1}\text{d}\hat{\mathcal{F}}_{\lambda} = \hat{\mathcal{U}}^{\lambda}\text{d}z + \hat{\mathcal{V}}^{\lambda}\text{d}\bar{z},
\end{equation}
where
\begin{equation*}
     \hat{\mathcal{U}}^{\lambda} = \begin{pmatrix}
 0 & 0 & \lambda^{-1}r & \lambda^{-1}ir \\
 0 & 0 & \lambda^{-1}ip & \lambda^{-1}p \\
 -\lambda^{-1}r & -\lambda^{-1}ip & 0 & \hat{q} \\
 -\lambda^{-1}ir & -\lambda^{-1}p & -\hat{q} & 0
\end{pmatrix}, \quad \hat{\mathcal{V}}^{\lambda} = \begin{pmatrix}
 0 & 0 & \lambda\bar{r} & -\lambda i\bar{r} \\
 0 & 0 & -\lambda i\bar{p} & \lambda\bar{p} \\
 -\lambda\bar{r} & \lambda i\bar{p} & 0 & \bar{\hat{q}} \\
 \lambda i\bar{r} & -\lambda\bar{p} & -\bar{\hat{q}} & 0
\end{pmatrix}.
 \end{equation*}
Then the flatness condition leads to 
\begin{align}\label{eq: M-C pr}
p_{\bar{z}} = ip\bar{\hat{q}}, \quad r_{\bar{z}} = -ir\bar{\hat{q}} \quad \mbox{and}\quad
        -i\frac{\hat{q}_{\bar{z}}}{2} + i\frac{\bar{\hat{q}}_{z}}{2} = |r|^{2} - |p|^{2}. 
\end{align}
By the first and second equations of \eqref{eq: M-C pr}, we obtain that
\begin{equation}\label{eq: iq}
i\bar{\hat{q}} = \left( \log p \right)_{\bar{z}} = -\left( \log r \right)_{\bar{z}}.
\end{equation}
On the other hand, note that 
\begin{equation}\label{eq: pr0}
    |p|^{2} = \frac{1}{2}\left( e^{u} - \hat{\beta} \right), \quad |r|^{2} = \frac{1}{2}\left( e^{u} + \hat{\beta} \right)
\end{equation}
hold. These imply that
\begin{equation}\label{eq: pr}
    |r|^{2} - |p|^{2} = \hat{\beta}, \quad |p|^{2}|r|^{2} = \frac{1}{4}|\hat{\alpha}|^{2}.
\end{equation} 
Thus by the third equation of \eqref{eq: M-C pr}, \eqref{eq: iq} and \eqref{eq: pr}, we have
$-\left( \log |r|^{2} \right)_{z\bar{z}} = 2\hat{\beta}$,
which can be written as follows:
 \begin{equation}\label{eq: M-C r beta2}
       -\frac{1}{2}\left( \log |r|^{2} \right)_{z\bar{z}} - \hat{\beta} = -\frac{1}{2}\left( \log |r|^{2} \right)_{z\bar{z}} + \frac{1}{4}|\hat{\alpha}|^{2}|r|^{-2} - |r|^{2} = 0.
\end{equation}
By choosing a real function $\hat{u}$ by 
$\hat{u} := \log (2 |r|^2)$, 
\eqref{eq: M-C r beta2} leads to
\begin{equation}\label{eq: sinh-Gordon equation}
    \hat{u}_{z\bar{z}} + e^{\hat{u}} - |\hat{\alpha}|^{2}e^{-\hat{u}} = 0,
\end{equation}
that is the \textit{elliptic sinh-Gordon equation.}
Note that by \eqref{eq: pr0}, we can also represent the metric on $\mathbb{D} \subset M$ by 
\begin{equation}\label{eq: metric}
    2 e^u \text{d}z\text{d}\bar{z} = (e^{\hat{u}} + |\hat{\alpha}|^{2}e^{-\hat{u}})\text{d}z\text{d}\bar{z}. 
\end{equation}
By this correspondence, we have established the following proposition.
\begin{Proposition}\label{prp:corrsinh}
 Let $u$ be the conformal factor of the metric of a minimal Lagrangian surface $f$, and let $\hat u$   
 be a function defined by \eqref{eq: metric}. Then $u$ satisfies the last PDE of \eqref{eq: M-C}
 if and only if $\hat u$ satisfies \eqref{eq: sinh-Gordon equation}.\
\end{Proposition}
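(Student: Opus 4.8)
The plan is to show that the substitution $2e^u = e^{\hat u} + |\hat\alpha|^2 e^{-\hat u}$ converts the last PDE of \eqref{eq: M-C} into \eqref{eq: sinh-Gordon equation} and back, using the chain of identities already assembled in the excerpt. First I would recall that, for a minimal Lagrangian surface, Theorem \ref{Thm1} and the normalization \eqref{eq:newlift} let us assume $\hat\beta = \sqrt{e^{2u} - |\hat\alpha|^2}$ is a nonnegative real function and $\hat\alpha$ is holomorphic, so $|\hat\alpha|^2$ is a product of a holomorphic and an antiholomorphic function. Then I would observe that the identities \eqref{eq: pr0} say $|r|^2 = \tfrac12(e^u + \hat\beta)$ and $|p|^2 = \tfrac12(e^u - \hat\beta)$, hence $2|r|^2 \cdot 2|p|^2 = e^{2u} - \hat\beta^2 = |\hat\alpha|^2$ and $2|r|^2 - 2|p|^2 = 2\hat\beta$; equivalently $e^{\hat u} = 2|r|^2$ and $|\hat\alpha|^2 e^{-\hat u} = 2|p|^2$, so that $e^{\hat u} + |\hat\alpha|^2 e^{-\hat u} = 2(|r|^2 + |p|^2) = 2e^u$, which is precisely \eqref{eq: metric}. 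This already makes the correspondence $u \leftrightarrow \hat u$ explicit and invertible: given $u$ (and holomorphic $\hat\alpha$), the quantity $e^{\hat u}$ is the larger root of $X^2 - 2e^u X + |\hat\alpha|^2 = 0$, namely $e^{\hat u} = e^u + \sqrt{e^{2u} - |\hat\alpha|^2} = e^u + \hat\beta$, so $\hat u$ is well-defined and smooth (using $e^{2u} \ge |\hat\alpha|^2$ established after \eqref{eq:alpha}); conversely $e^u = \tfrac12(e^{\hat u} + |\hat\alpha|^2 e^{-\hat u})$ recovers $u$.

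Next I would give the PDE equivalence directly, bypassing the frame computation. Starting from the last equation of \eqref{eq: M-C}, I would use the first equation $e^{2u} - \hat\beta^2 = |\hat\alpha|^2$ together with $\alpha_{\bar z} = 0$ to rewrite everything in terms of $\hat\beta$ and $\hat\alpha$; the cleanest route is to note that \eqref{eq: M-C r beta2} — derived in the excerpt solely from the flatness equations \eqref{eq: M-C pr}, \eqref{eq: iq}, \eqref{eq: pr} — is exactly the statement $-\tfrac12(\log|r|^2)_{z\bar z} + \tfrac14|\hat\alpha|^2 |r|^{-2} - |r|^2 = 0$, and that setting $\hat u = \log(2|r|^2)$ turns this into $\hat u_{z\bar z} + e^{\hat u} - |\hat\alpha|^2 e^{-\hat u} = 0$. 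Since Theorem \ref{Thm1} already tells us that minimality is equivalent to the flatness of $\text{d} + \omega^\lambda$ for all $\lambda$, and the last PDE of \eqref{eq: M-C} is one of the flatness equations \eqref{eq: second order PDE}, the logical skeleton is: ``$u$ solves the last PDE of \eqref{eq: M-C}'' $\Rightarrow$ (together with the already-available first three equations of \eqref{eq: M-C}, which encode holomorphicity of $\hat\alpha$ and realness of $\hat\beta$) the system \eqref{eq: M-C pr} holds $\Rightarrow$ \eqref{eq: M-C r beta2} $\Rightarrow$ \eqref{eq: sinh-Gordon equation}; and each arrow is reversible because the substitutions $e^{\hat u} = 2|r|^2$, $\hat\beta = |r|^2 - |p|^2$, $|\hat\alpha|^2 = 4|p|^2|r|^2$ are bijective on the relevant range. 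I would therefore present the proof as: (i) record \eqref{eq: metric} from \eqref{eq: pr0}; (ii) note $\hat u \mapsto u$ and $u \mapsto \hat u$ are mutually inverse smooth maps via the quadratic formula; (iii) invoke \eqref{eq: M-C r beta2} and the definition $\hat u = \log(2|r|^2)$ to pass between the two PDEs, checking that $(\log|r|^2)_{z\bar z} = \hat u_{z\bar z}$ and $\tfrac14|\hat\alpha|^2|r|^{-2} - |r|^2 = \tfrac12(|\hat\alpha|^2 e^{-\hat u} - e^{\hat u})$.

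The one genuine computation to carry out carefully — and the step I expect to be the main obstacle — is verifying that the last PDE of \eqref{eq: M-C}, written in the $u$-variable with its five-term left-hand side involving $|\alpha_z|^2$, $|u_z|^2|\alpha|^2$, and the mixed terms $\alpha_z u_{\bar z}\bar\alpha$, $\bar\alpha_{\bar z}u_z\alpha$, really does collapse to \eqref{eq: M-C r beta2} under the substitution, i.e. that those cross-terms reorganize into the $(\log|r|^2)_{z\bar z}$ combination once one uses $e^{2u} = \hat\beta^2 + |\hat\alpha|^2$, $\alpha_{\bar z}=0$, and the third equation of \eqref{eq: M-C}. Rather than grinding through this identity in the $u$-variable, I would sidestep it by working entirely on the $|r|^2$ side: the excerpt has \emph{already} derived \eqref{eq: M-C r beta2} from the flatness system, and it has already shown (Theorem \ref{Thm1}) that flatness of the whole $\lambda$-family is equivalent to minimality, hence to $\phi = 0$, hence to the system \eqref{eq: M-C} whose last member is the PDE in question. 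So the honest statement is that both ``$u$ solves the last PDE of \eqref{eq: M-C}'' and ``$\hat u$ solves \eqref{eq: sinh-Gordon equation}'' are, for a minimal Lagrangian surface, just two coordinate expressions of the single curvature/flatness equation \eqref{eq:l M-C6}; the proposition is then immediate once one checks the variable change is a bijection, which is step (ii) above. I would write the proof in this compact form, remarking that the detailed term-by-term reconciliation in the $u$-variable is routine and already implicit in the derivation of \eqref{eq: M-C pr}--\eqref{eq: M-C r beta2}.
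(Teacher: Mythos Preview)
Your proposal is correct and follows essentially the same route as the paper: the paper gives no separate proof block for this proposition but simply declares it ``established'' by the preceding derivation \eqref{eq: M-C pr}--\eqref{eq: metric}, i.e.\ exactly the chain of identities through $p$, $r$, $\hat q$ and the substitution $\hat u=\log(2|r|^2)$ that you invoke. Your added remarks on the invertibility of $u\leftrightarrow\hat u$ via the quadratic $X^2-2e^uX+|\hat\alpha|^2=0$ make the ``if and only if'' slightly more explicit than the paper does, but the core argument is identical.
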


\subsection{A family of minimal Lagrangian surfaces}\label{sbsc:family}
To obtain a family of minimal Lagrangian surfaces of a given minimal Lagrangian surface $f$, 
let us consider the \textit{gauge transformation} of the extended frame $\hat{\mathcal{F}}_{\lambda}$. 
For a family of smooth maps $\mathcal{G}_{\lambda}: \mathbb{D} \to \rm{SO}(2) \times \rm{SO}(2)$ 
parametrized by $\lambda \in \mathbb S^1$, let $\tilde{\mathcal{F}}_{\lambda} := \hat{\mathcal{F}}_{\lambda}\mathcal{G}_{\lambda}$.
 Then a straightforward computation shows that 
\begin{equation*}
    \tilde{{\omega}}^{\lambda} := \tilde{\mathcal{F}}_{\lambda}^{-1}\text{d}\tilde{\mathcal{F}}_{\lambda} = \mathcal{G}_{\lambda}^{-1}\hat{\mathcal{F}}_{\lambda}^{-1}(\text{d}\hat{\mathcal{F}}_{\lambda}\mathcal{G}_{\lambda} + \hat{\mathcal{F}}_{\lambda}\text{d}\mathcal{G}_{\lambda}) = \mathcal{G}_{\lambda}^{-1}\hat{\omega}^{\lambda}\mathcal{G}_{\lambda} + \mathcal{G}_{\lambda}^{-1}\text{d}\mathcal{G}_{\lambda}
\end{equation*}
holds. We explicitly define $\mathcal{G}_{\lambda}$ as follows:
For $p$ and $r$ in \eqref{eq:pr}, it is easy to check $pr = -\frac{1}{2}\hat{\alpha}$, $i.e.$ $p = -\frac{1}{2}\hat{\alpha}r^{-1}$. Thus by setting $r = |r|e^{i\arg r}$, 
where $\arg r \in \mathbb{R}$ is the argument of $r$, then $p = -\frac{1}{2}\hat{\alpha}|r|^{-1}
e^{-i\arg r}$ holds. Moreover, let 
$\lambda = e^{i \theta} (\theta \in \R)$ and we define a gauge $\mathcal{G}_{\lambda}$  by 
\begin{equation*}
    \mathcal{G}_{\lambda} = \begin{pmatrix}
 1 & 0 & 0 & 0 \\
 0 & 1 & 0 & 0 \\
 0 & 0 & \cos (\arg r- \theta) & \sin (\arg r- \theta) \\
 0 & 0 & -\sin (\arg r -\theta) & \cos (\arg r- \theta)
\end{pmatrix} \in \rm{SO}(2) \times \rm{SO}(2).
\end{equation*}
Since $|r| = \frac{1}{\sqrt{2}}e^{\frac{\hat{u}}{2}}$ holds, we obtain that
\begin{equation}\label{eq: Maurer-Cartan form tilde}
    \tilde{\omega}^{\lambda} = \tilde{\mathcal{U}}^{\lambda}\text{d}z + \tilde{\mathcal{V}}^{\lambda}\text{d}\bar{z},
\end{equation}
where
\begin{equation}\label{eq: tilde U}
    \tilde{\mathcal{U}}^{\lambda} = \frac{\sqrt{2}}{2}\begin{pmatrix}
 0 & 0 & e^{\hat{u}/2} & ie^{\hat{u}/2} \\
 0 & 0 & -i \lambda^{-2}\hat{\alpha}e^{-\hat{u}/2} & -\lambda^{-2}\hat{\alpha}e^{-\hat{u}/2} \\
 -e^{\hat{u}/2} & i\lambda^{-2}\hat{\alpha}e^{-\hat{u}/2} & 0 & -\frac{i}{\sqrt{2}}\hat{u}_{z} \\
 -ie^{\hat{u}/2} & \lambda^{-2}\hat{\alpha}e^{-\hat{u}/2} & \frac{i}{\sqrt{2}}\hat{u}_{z} & 0
\end{pmatrix}, 
\end{equation}
and
\begin{equation}\label{eq: tilde V}
\tilde{\mathcal{V}}^{\lambda} = \frac{\sqrt{2}}{2}\begin{pmatrix}
 0 & 0 & e^{\hat{u}/2} & -ie^{\hat{u}/2} \\
 0 & 0 & i \lambda^2 \bar{\hat{\alpha}}e^{-\hat{u}/2} & -\lambda^2 \bar{\hat{\alpha}}e^{-\hat{u}/2} \\
 - e^{\hat{u}/2} & - i\lambda^2 \bar{\hat{\alpha}}e^{-\hat{u}/2} & 0 & \frac{i}{\sqrt{2}}\hat{u}_{\bar{z}} \\
 ie^{\hat{u}/2} & \lambda^2 \bar{\hat{\alpha}}e^{-\hat{u}/2} & -\frac{i}{\sqrt{2}}\hat{u}_{\bar{z}} & 0
\end{pmatrix}.
\end{equation}

From the form of $\tilde \omega^{\lambda}$, it is easy to see that there exists 
a family of minimal Lagrangian surfaces $f^{\lambda}$ such that the following fundamental 
quantities
\[
\hat u^{\lambda} = \hat u, \quad \hat \alpha^{\lambda} = \lambda^{-2} \hat \alpha, \quad 
\hat \beta^{\lambda} = \hat \beta.
\]
Note that the metric of $f^{\lambda}$ is given by 
\[
       (e^{\hat{u}} + |\hat{\alpha}^{\lambda}|^{2}e^{-\hat{u}})\text{d}z\text{d}\bar{z} =  (e^{\hat{u}} + |\hat{\alpha}|^{2}e^{-\hat{u}})\text{d}z\text{d}\bar{z},
\] 
and the surface $f^{\lambda}$ is obtained by the sum of the first two columns for 
$\tilde{\mathcal F}_{\lambda}$, i.e., 
\[
f^{\lambda}=[\hat{\mathfrak f}^{\lambda}] \in Q_2, \quad 
\hat{\mathfrak f}^{\lambda} = \frac{\sqrt{2}}2(\tilde{\mathcal F}_{\lambda}^1 + i \tilde{\mathcal F}_{\lambda}^2) \in \C^4,
\]
where $\tilde{\mathcal F}_{\lambda}^j (j=1, 2)$ denotes the $j$-th column of 
$\tilde{\mathcal F}_{\lambda}$.
We summarize the above discussion as the following theorem:
\begin{Theorem}\label{thm2}
   If $f: M \to Q_{2}$ is a minimal Lagrangian immersion with the induced metric $2 e^{u}\mathrm{d}z 
   \mathrm{d}\bar z$ and the  holomorphic quadratic differential $\hat \alpha\,  \mathrm{d}z^{2}$, then there exists a family of minimal Lagrangian immersions $\{ f^{\lambda} \}$ parameterized by $\lambda \in \mathbb{S}^{1}$, which has the same induced metric $2 e^{u}\mathrm{d}z 
   \mathrm{d}\bar z$ and the holomorphic quadratic differential $\hat \alpha^{\lambda} \mathrm{d}z^{2} = \lambda^{-2} \hat \alpha \, \mathrm{d}z^{2}$.
\end{Theorem}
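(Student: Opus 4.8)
The idea is to produce $f^{\lambda}$ directly from the gauged extended frame $\tilde{\mathcal F}_{\lambda}=\hat{\mathcal F}_{\lambda}\mathcal G_{\lambda}$ constructed above, and then to read off its invariants. First I would record two preliminary facts: for $\lambda\in\mathbb S^{1}$ one has $\overline{\tilde{\mathcal U}^{\lambda}}=\tilde{\mathcal V}^{\lambda}$ (using $\bar\lambda=\lambda^{-1}$, that $\hat u$ is real and that $\hat\alpha$ is holomorphic), so $\tilde\omega^{\lambda}$ is a genuine $\mathfrak{so}(4)$-valued $1$-form and $\tilde{\mathcal F}_{\lambda}$ may be taken $\mathrm{SO}(4)$-valued; and $\mathrm d+\tilde\omega^{\lambda}$ is flat for every $\lambda\in\mathbb S^{1}$, either because it is the $\mathcal G_{\lambda}$-gauge transform of the family $\mathrm d+\hat\omega^{\lambda}$, which is flat for all $\lambda$ by Theorem~\ref{Thm1}, or by checking directly that $\tilde{\mathcal U}^{\lambda}_{\bar z}-\tilde{\mathcal V}^{\lambda}_{z}=[\tilde{\mathcal U}^{\lambda},\tilde{\mathcal V}^{\lambda}]$ reduces to the sinh-Gordon equation~\eqref{eq: sinh-Gordon equation}, which holds by Proposition~\ref{prp:corrsinh}. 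On a simply connected coordinate patch $\mathbb D$ this yields a smooth $\mathbb S^{1}$-family $\tilde{\mathcal F}_{\lambda}\colon\mathbb D\to\mathrm{SO}(4)$ with $\tilde{\mathcal F}_{\lambda}|_{\lambda=1}=\hat{\mathcal F}\,\mathcal G_{1}$, and I set
\[
\hat{\mathfrak f}^{\lambda}:=\tfrac{\sqrt 2}{2}\bigl(\tilde{\mathcal F}^{1}_{\lambda}+i\,\tilde{\mathcal F}^{2}_{\lambda}\bigr),\qquad f^{\lambda}:=[\hat{\mathfrak f}^{\lambda}],
\]
where $\tilde{\mathcal F}^{j}_{\lambda}$ denotes the $j$-th column; since $\mathcal G_{1}$ acts only on the last $\mathrm{SO}(2)$-block, at $\lambda=1$ this returns $\hat{\mathfrak f}$, so $f^{1}=f$.

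Next I would verify that $f^{\lambda}$ is a conformal Lagrangian immersion into $Q_{2}$ admitting $\hat{\mathfrak f}^{\lambda}$ as a horizontal lift, with the claimed invariants. Because the columns $\tilde{\mathcal F}^{1}_{\lambda},\dots,\tilde{\mathcal F}^{4}_{\lambda}$ are real and orthonormal, $\langle\hat{\mathfrak f}^{\lambda},\hat{\mathfrak f}^{\lambda}\rangle=\tfrac12\bigl(|\tilde{\mathcal F}^{1}_{\lambda}|^{2}-|\tilde{\mathcal F}^{2}_{\lambda}|^{2}\bigr)+i\langle\tilde{\mathcal F}^{1}_{\lambda},\tilde{\mathcal F}^{2}_{\lambda}\rangle=0$ and $|\hat{\mathfrak f}^{\lambda}|^{2}=1$, so $f^{\lambda}(\mathbb D)\subset Q_{2}$ and $\hat{\mathfrak f}^{\lambda}$ takes values in $\mathbb S^{7}$. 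Using $\partial_{z}\tilde{\mathcal F}_{\lambda}=\tilde{\mathcal F}_{\lambda}\tilde{\mathcal U}^{\lambda}$ and the explicit shape of $\tilde{\mathcal U}^{\lambda}$ in~\eqref{eq: tilde U}, the columns $\partial_{z}\tilde{\mathcal F}^{1}_{\lambda},\partial_{z}\tilde{\mathcal F}^{2}_{\lambda}$ are linear combinations of $\tilde{\mathcal F}^{3}_{\lambda},\tilde{\mathcal F}^{4}_{\lambda}$ only, so $\hat{\mathfrak f}^{\lambda}_{z}$ is orthogonal to $\tilde{\mathcal F}^{1}_{\lambda},\tilde{\mathcal F}^{2}_{\lambda}$; this gives the horizontality relations $\langle\hat{\mathfrak f}^{\lambda}_{z},\overline{\hat{\mathfrak f}^{\lambda}}\rangle=\langle\hat{\mathfrak f}^{\lambda}_{z},\hat{\mathfrak f}^{\lambda}\rangle=0$, and likewise for $\hat{\mathfrak f}^{\lambda}_{\bar z}$. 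A short computation using only the orthonormality of the columns and~\eqref{eq: metric} then gives
\[
\langle\hat{\mathfrak f}^{\lambda}_{z},\hat{\mathfrak f}^{\lambda}_{z}\rangle=\lambda^{-2}\hat\alpha,\qquad
\langle\hat{\mathfrak f}^{\lambda}_{z},\overline{\hat{\mathfrak f}^{\lambda}_{z}}\rangle=\tfrac12\bigl(e^{\hat u}+|\hat\alpha|^{2}e^{-\hat u}\bigr)=e^{u},\qquad
\langle\hat{\mathfrak f}^{\lambda}_{z},\overline{\hat{\mathfrak f}^{\lambda}_{\bar z}}\rangle=0,\qquad
\langle\hat{\mathfrak f}^{\lambda}_{z},\hat{\mathfrak f}^{\lambda}_{\bar z}\rangle=\tfrac12\bigl(e^{\hat u}-|\hat\alpha|^{2}e^{-\hat u}\bigr)=\hat\beta .
\]
Hence $f^{\lambda}$ is conformal and Lagrangian, its induced metric is $2e^{u}\,\mathrm dz\,\mathrm d\bar z$ — the same as that of $f$, so $f^{\lambda}$ is isometric to $f$ — and the invariants of $\hat{\mathfrak f}^{\lambda}$ are $u^{\lambda}=u$, $\hat\alpha^{\lambda}=\lambda^{-2}\hat\alpha$ and $\hat\beta^{\lambda}=\hat\beta$.

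Finally, minimality of $f^{\lambda}$ follows from Theorem~\ref{Thm1}: the quadratic differential $\hat\alpha^{\lambda}\,\mathrm dz^{2}=\lambda^{-2}\hat\alpha\,\mathrm dz^{2}$ is holomorphic since $\hat\alpha$ is and $\lambda$ is constant, while $\hat\beta^{\lambda}=\hat\beta\ge 0$ has constant argument; thus the implication (3)$\Rightarrow$(1) applies. Alternatively, since $|\hat\alpha^{\lambda}|=|\hat\alpha|$ and $\hat u$ solves~\eqref{eq: sinh-Gordon equation}, the triple $(u,\hat\alpha^{\lambda},\hat\beta^{\lambda})$ satisfies the system~\eqref{eq: M-C}; in particular $\phi^{\lambda}=0$, which is minimality by \cite[Theorem~3.2]{WX}. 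I expect the only real work to be the second step — converting the columns of $\tilde{\mathcal F}_{\lambda}$ and the entries of $\tilde{\mathcal U}^{\lambda},\tilde{\mathcal V}^{\lambda}$ into the scalar invariants of $\hat{\mathfrak f}^{\lambda}$, and the bookkeeping leading to $\langle\hat{\mathfrak f}^{\lambda}_{z},\overline{\hat{\mathfrak f}^{\lambda}_{z}}\rangle=e^{u}$ via~\eqref{eq: metric} — because flatness, hence the integrability needed to obtain $\tilde{\mathcal F}_{\lambda}$, is already guaranteed; there is no genuine analytic obstacle. On a non-simply-connected $M$ one argues on coordinate patches or passes to the universal cover, the closing-up/monodromy question being outside the scope of this statement.
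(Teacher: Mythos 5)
Your proposal is correct and follows essentially the same route as the paper: the authors also integrate the gauged flat family $\mathrm d+\tilde\omega^{\lambda}$ to get $\tilde{\mathcal F}_{\lambda}$ and define $f^{\lambda}=[\tfrac{\sqrt2}{2}(\tilde{\mathcal F}^{1}_{\lambda}+i\tilde{\mathcal F}^{2}_{\lambda})]$, reading off $\hat u^{\lambda}=\hat u$, $\hat\alpha^{\lambda}=\lambda^{-2}\hat\alpha$, $\hat\beta^{\lambda}=\hat\beta$ from the form of $\tilde\omega^{\lambda}$. Your write-up simply supplies the column-wise verification of the invariants (which the paper leaves as "it is easy to see") and correctly closes the argument with Theorem~\ref{Thm1}~(3)$\Rightarrow$(1).
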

 \subsection{Minimal Lagrangian surfaces and minimal surfaces in $\mathbb S^3$}
 We now compare the Maurer-Cartan form of 
a minimal Lagrangian surface with that of a minimal surface in $\mathbb{S}^{3}$. Then we establish the following theorem.
\begin{Theorem}\label{Thm corresponding}
 Any minimal surface $f_{min}$ in $\mathbb S^3$ with unit normal $N$, the metric 
 $2 e^{\hat u}\, \mathrm{d} z\mathrm{d}\bar{z}$ and 
 the Hopf differential $\mathcal{Q}\, \mathrm{d}z^{2}$, 
 gives a minimal Lagrangian surface  $f=[f_{min} + iN ] \in Q_2$ with 
 the holomorphic differential $-i \mathcal{Q} \mathrm{d}z^{2}$ and the metric 
 \begin{equation}\label{eq:liftmetric}
 2 e^{u}\, \mathrm{d} z\mathrm{d}\bar{z}
   = \bigl(e^{\hat u} + |\mathcal{Q}|^{2} e^{-\hat u}\bigr)\, \mathrm{d} z\mathrm{d}\bar{z}.
\end{equation}
 
 Conversely, for any minimal Lagrangian surface $f$  in $Q_{2}$ with the holomorphic differential 
 $\hat \alpha \, \mathrm{d}z^2$ and the metric 
 $2e^{u}\text{d}z\text{d}\bar{z}$, there exists a unique map $g= (f_{min}, N)$ 
 which takes values of the unit tangent bundle of $\mathrm{U}\mathbb S^3 = \mathbb S^3 \times \mathbb S^2$ with the metric of $g$ is $4e^{u}\text{d}z\text{d}\bar{z}$. Furthermore, 
 both projections $f_{min}$ and $N$ share the Hopf differential $i\hat \alpha  \, \mathrm{d}z^2$
 and the metrics $2e^{\hat{u}}\text{d}z\text{d}\bar{z}$ of $f_{min}$ 
 and $2e^{\tilde{u}}\text{d}z\text{d}\bar{z}$ of $N$
 are given by 
 \begin{equation*}
  e^{\hat{u}} = e^{u} + \sqrt{e^{2u} - |\hat{\alpha}|^{2}}, \quad e^{\tilde{u}} = e^{u} - \sqrt{e^{2u} - |\hat{\alpha}|^{2}}. 
 \end{equation*}
\end{Theorem}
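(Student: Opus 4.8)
The strategy is to read everything off the explicit loop of Maurer--Cartan forms $\tilde\omega^{\lambda}$ of \eqref{eq: tilde U}--\eqref{eq: tilde V}, evaluated at $\lambda=1$, and to recognize $\tilde\omega^{1}$ as the Maurer--Cartan form of an $\SO$-frame adapted to a (possibly branched) minimal surface in $\mathbb S^{3}$. So the first thing I would do is record the structure equations of a conformal minimal immersion $\psi:M\to\mathbb S^{3}\subset\mathbb R^{4}$, with conformal coordinate $z$, induced metric $2e^{w}\,\mathrm dz\mathrm d\bar z$, unit normal $\nu\in T\mathbb S^{3}$, and (automatically holomorphic) Hopf differential $\mathcal Q\,\mathrm dz^{2}$: namely $\psi_{zz}=w_{z}\psi_{z}+\mathcal Q\,\nu$, the minimality equation $\psi_{z\bar z}=-e^{w}\psi$, and the Weingarten relation $\nu_{z}=-\mathcal Q e^{-w}\psi_{\bar z}$. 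Setting $\varepsilon_{3}+i\varepsilon_{4}:=-\sqrt2\,e^{-w/2}\psi_{z}$ produces a unitary tangent frame, so that $\Psi:=(\psi,\nu,\varepsilon_{3},\varepsilon_{4}):M\to\SO$, and a direct differentiation shows $\Psi^{-1}\mathrm d\Psi=\tilde{\mathcal U}^{1}\mathrm dz+\tilde{\mathcal V}^{1}\mathrm d\bar z$, that is, exactly the matrices of \eqref{eq: tilde U}--\eqref{eq: tilde V} at $\lambda=1$ with $\hat u=w$ and $\hat\alpha=-i\mathcal Q$; moreover the zero-curvature condition for this $1$-form is the Gauss equation of $\psi$ (equivalently \eqref{eq: sinh-Gordon equation}) together with Codazzi (holomorphicity of $\mathcal Q$). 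This dictionary, run in reverse, says that any map into $\SO$ whose Maurer--Cartan form has the shape \eqref{eq: tilde U}--\eqref{eq: tilde V} has first column a minimal surface in $\mathbb S^{3}$ and second column its unit normal.

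Granting the dictionary, the forward direction is quick: given $f_{min}$ minimal in $\mathbb S^{3}$ with the stated data and normal $N$, set $\mathfrak f:=\tfrac1{\sqrt2}(f_{min}+iN)$. From $|f_{min}|=|N|=1$, $\langle f_{min},N\rangle=0$ and the first-order relations above one checks directly that $\mathfrak f\in\mathbb S^{7}$, that $\langle\mathfrak f,\mathfrak f\rangle=0$ (so $[\mathfrak f]\in Q_{2}$), that $\mathfrak f$ is a horizontal, conformal, Lagrangian lift, and that $\phi=e^{-u}\langle\mathfrak f_{z\bar z},\overline{\mathfrak f_{\bar z}}\rangle=0$ (this last being where minimality of $f_{min}$ enters); hence $f=[\mathfrak f]=[f_{min}+iN]$ is minimal Lagrangian by \cite[Theorem 3.2]{WX}, equivalently by Theorem \ref{Thm1}. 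Then $\hat\alpha=\langle\mathfrak f_{z},\mathfrak f_{z}\rangle=i\langle(f_{min})_{z},N_{z}\rangle=-i\mathcal Q$, and $2e^{u}=\langle\mathfrak f_{z},\overline{\mathfrak f_{z}}\rangle=e^{\hat u}+|\mathcal Q|^{2}e^{-\hat u}$, which is \eqref{eq:liftmetric}. Alternatively one can route this through the dictionary, building the adapted frame $\tilde{\mathcal F}=(f_{min},N,\varepsilon_{3},\varepsilon_{4})$, undoing the gauge $\mathcal G_{\lambda}$ of Section \ref{sbsc:family} and invoking Theorem \ref{Thm1}(2)$\Rightarrow$(1); this simultaneously yields the associated family, recovering Theorem \ref{thm2}.

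For the converse, I start from a minimal Lagrangian $f$, pass to the normalized horizontal lift $\hat{\mathfrak f}$ of \eqref{eq:newlift} and its gauged extended frame $\tilde{\mathcal F}_{\lambda}$ of Section \ref{sbsc:family}; Theorem \ref{Thm1} makes $\hat\alpha$ holomorphic and $\hat u$ a solution of \eqref{eq: sinh-Gordon equation}, so $\tilde\omega^{1}$ is flat. Applying the dictionary to $\tilde{\mathcal F}:=\tilde{\mathcal F}_{1}$, whose first two columns are $(f_{min},N):=(\sqrt2\,\Re\hat{\mathfrak f},\sqrt2\,\Im\hat{\mathfrak f})$, gives a minimal $f_{min}:M\to\mathbb S^{3}$ conformal in $z$ with metric $2e^{\hat u}\,\mathrm dz\mathrm d\bar z$ and Hopf differential $i\hat\alpha\,\mathrm dz^{2}$, with $N$ its unit normal: indeed $|f_{min}|=|N|=1$, $\langle f_{min},N\rangle=0$ and $\langle N,(f_{min})_{z}\rangle=0$ follow from $\hat{\mathfrak f}\in\mathbb S^{7}$, $\langle\hat{\mathfrak f},\hat{\mathfrak f}\rangle=0$ and the horizontal-lift conditions, so $g:=(f_{min},N)$ lands in $\mathrm U\mathbb S^{3}$. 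Reading column $2$ of $\tilde{\mathcal U}^{1}$ gives $N_{z}=-\tfrac{\sqrt2}{2}\hat\alpha e^{-\hat u/2}(i\varepsilon_{3}+\varepsilon_{4})$, so $N$ is also conformal in $z$, with $T(N(M))=T(f_{min}(M))$, polar surface $f_{min}$, Hopf differential again $i\hat\alpha\,\mathrm dz^{2}$, and metric $2e^{\tilde u}\,\mathrm dz\mathrm d\bar z$ with $e^{\tilde u}=|\hat\alpha|^{2}e^{-\hat u}$. Since $e^{\hat u}e^{\tilde u}=|\hat\alpha|^{2}$ and $e^{\hat u}+e^{\tilde u}=2e^{u}$ (the latter being \eqref{eq: metric}), the numbers $e^{\hat u},e^{\tilde u}$ are the two roots of $t^{2}-2e^{u}t+|\hat\alpha|^{2}$, which gives the stated formulas $e^{\hat u}=e^{u}+\sqrt{e^{2u}-|\hat\alpha|^{2}}$ and $e^{\tilde u}=e^{u}-\sqrt{e^{2u}-|\hat\alpha|^{2}}$; and since $N_{z}$ is already tangent to $f_{min}(M)$, the covariant derivative of $N$ along $f_{min}$ in $\mathbb S^{3}$ is its ambient derivative, so the Sasaki metric of $g$ is the sum of the two, $(2e^{\hat u}+2e^{\tilde u})\,\mathrm dz\mathrm d\bar z=4e^{u}\,\mathrm dz\mathrm d\bar z$. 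Uniqueness holds up to an isometry of $\mathbb S^{3}$: the data $(\hat u,i\hat\alpha)$ determine $f_{min}$ by the fundamental theorem of surface theory and $N$ is then its unit normal; equivalently, once $\hat{\mathfrak f}$ is fixed, $g=(\sqrt2\,\Re\hat{\mathfrak f},\sqrt2\,\Im\hat{\mathfrak f})$ is determined outright.

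I expect the real work to be the dictionary of the first paragraph --- matching the structure equations of minimal surfaces in $\mathbb S^{3}$ with the explicit matrices \eqref{eq: tilde U}--\eqref{eq: tilde V} entry by entry, keeping all the factors $\sqrt2$, $i$ and signs consistent (in particular the phase relating $\hat\alpha$ to $\mathcal Q$ and the orientation of $N$), and handling the identification $\mathrm U\mathbb S^{3}\cong\mathbb S^{3}\times\mathbb S^{2}$ by left translation, under which the $\mathbb R^{4}$-valued normal $N$ (itself valued in $\mathbb S^{3}$) is recorded as a map into $\mathbb S^{2}$. A minor point to address is that $N_{z}$ vanishes where $\hat\alpha=0$, so the second projection $N$ may be branched (and is constant in the totally geodesic case $\hat\alpha\equiv0$ of Lemma \ref{lem:totally}); nonetheless $g$ is always an immersion because $4e^{u}>0$, which is precisely why the correspondence holds with no hypothesis $\hat\alpha\neq0$.
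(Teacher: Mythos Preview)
Your proposal is correct and follows essentially the same route as the paper: both arguments hinge on recognizing $\tilde\omega^{\lambda}|_{\lambda=1}$ in \eqref{eq: tilde U}--\eqref{eq: tilde V} as the Maurer--Cartan form of an adapted $\SO$-frame of a minimal surface in $\mathbb S^{3}$ (the paper cites \cite{Bobenko1991,SKKR} for this ``dictionary'', whereas you spell it out), and then reading off the correspondence $\hat{\mathfrak f}=\tfrac{1}{\sqrt2}(f_{min}+iN)$ in both directions. Your treatment is slightly more explicit in a few places --- the quadratic solving $e^{\hat u},e^{\tilde u}$ from $e^{\hat u}+e^{\tilde u}=2e^{u}$ and $e^{\hat u}e^{\tilde u}=|\hat\alpha|^{2}$, the Sasaki-metric computation, and the remark on branching of $N$ at zeros of $\hat\alpha$ --- but these are natural elaborations of the same argument rather than a different method.
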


\begin{proof}
 Let $f_{min}$ be a minimal surface in $\mathbb{S}^{3}$ and $\mathbb{D} \subset M$ be a simply connected domain with conformal  coordinate $z$. Note that the induced metric on $\mathbb{D}$ can be written as $ds^{2}_{M} = 2e^{\hat{u}}\text{d}z\text{d}\bar{z}$. 
 Now let $\mathcal{Q} \mathrm{d}z^{2} = i\hat{\alpha} \mathrm{d}z^{2}$ be the Hopf differential, that is the 
 $(2,0)$-part of the second fundamental form and it is holomorphic for $f_{min}$, 
 and  choose the $\SO$-frame of $f_{min}$ as follows:
 \begin{equation}\label{eq: frame sigma}
     \sigma = \left( f_{min}, N , -\frac{(f_{min})_{z} + (f_{min})_{\bar{z}}}{\sqrt{2}e^{\hat{u}/2}}, \frac{i\left( (f_{min})_{z} - (f_{min})_{\bar{z}} \right)}{\sqrt{2}e^{\hat{u}/2}} \right), 
 \end{equation}
where $N$ is the normal vector of $f_{min}$.
By $\mathcal{Q} \mathrm{d}z^{2} = i\hat{\alpha} \mathrm{d}z^{2}$ and the results in \cite{Bobenko1991, SKKR}, the Maurer-Cartan form obtained by \eqref{eq: frame sigma} is the same as \eqref{eq: Maurer-Cartan form tilde} for $\lambda = 1$. It follows that the minimal Lagrangian surface $f$ and its local lift $\hat{\mathfrak{f}}$ defined above can be obtained by 
\begin{equation}
f=[ \hat{\mathfrak{f}}] \in Q_{2}, \quad \hat{\mathfrak{f}} := \frac{\sqrt{2}}{2}\left( f_{min} + iN \right) \in \mathbb{C}^{4}.
\end{equation}
By direct computation, we have
\begin{equation*}
    \langle \hat{\mathfrak{f}}_{z} , \hat{\mathfrak{f}}_{z} \rangle = 
    \frac12 \left\langle  (f_{min})_{z} + iN_{z}, (f_{min})_{z} + iN_{z} \right\rangle = -i\mathcal{Q} = \hat{\alpha}.
\end{equation*}
 Thus the first claim follows.
 
 Note that, the normal vector $N$ can be viewed as the dual minimal surface of $f_{min}$. After choosing the $\SO$-frame of $N$ properly as follows:
\begin{equation}\label{eq: frame sigma1}
     \tilde{\sigma} = \left( N, f_{min} , -\frac{N_{z} + N_{\bar{z}}}{\sqrt{2}e^{\tilde{u}/2}}, \frac{i\left( N_{z} - N_{\bar{z}} \right)}{\sqrt{2}e^{\tilde{u}/2}} \right), 
 \end{equation}
the minimal Lagrangian surface $\tilde{f}$ can be obtained by
\begin{equation}
\tilde{f}=[ N + if_{min}] = [f_{min} - iN] = \big[ \; \bar{\hat{\mathfrak{f}}} \; \big] \in Q_{2} .
\end{equation}
It shows that $\{ f_{min} , N \}$ corresponds to the same minimal Lagrangian surface $f$ in $Q_{2}$ up to a conjugation. 

Conversely, let $f$ be a minimal Lagrangian surface in $Q_{2}$ with the holomorphic differential $\hat{\alpha} \mathrm{d}z^{2}$ and $\tilde{\omega}^{\lambda}$ be the Maurer-Cartan form defined in \eqref{eq: Maurer-Cartan form tilde}. Since $\tilde{\omega}^{\lambda}|_{\lambda = 1}$ is the same as the Maurer-Cartan form of the $\SO$-frame \eqref{eq: frame sigma} of $f_{min}$, there exists a pair of minimal surfaces in $\mathbb{S}^{3}$ as follows:
\begin{equation}
    f_{min}= \frac{1}{\sqrt{2}}\big( \hat{\mathfrak{f}} + \bar{\hat{\mathfrak{f}}} \big) = \sqrt{2} 
    \operatorname{Re}  \hat{\mathfrak{f}} , \quad N = -\frac{i}{\sqrt{2}}\big( \hat{\mathfrak{f}} - \bar{\hat{\mathfrak{f}}} \big) = \sqrt{2}\operatorname{Im}\hat{\mathfrak{f}},
\end{equation}
where $\hat{\mathfrak{f}}$ is the local lift defined above. 
On the other hand, since the Maurer-Cartan form $\tilde{\omega}^{\lambda}|_{\lambda = 1}$ could be also identified by another Maurer-Cartan form of the $\SO$-frame \eqref{eq: frame sigma1} of $N$, there exists another pair of minimal surfaces in $\mathbb{S}^{3}$ as follows:
\begin{equation}
    N = \sqrt{2} \operatorname{Re}\hat{\mathfrak{f}}, \quad f_{min} = \sqrt{2}\operatorname{Im}\hat{\mathfrak{f}}.
\end{equation}
In conclusion, there exists a unique pair of minimal surfaces $\{( f_{min}, 2e^{\hat{u}}\text{d}z\text{d}\bar{z}) , (N, 2e^{\tilde{u}}\text{d}z \text{d}\bar{z})\}$ in $\mathbb{S}^3$. 
By similar computation, we have
\begin{equation*}
    \mathcal{Q} = \langle (f_{min})_{zz}, N  \rangle = -\left\langle (f_{min})_{z} , N_{z} \right\rangle = \left\langle N_{zz}, f_{min}  \right\rangle = i\hat{\alpha}.
\end{equation*}
This completes the proof.
\end{proof}

\begin{Remark}
\mbox{}
\begin{enumerate}
  \item The metric defined in \eqref{eq: metric} can be thought as half the Sasakian metric 
 on the unit tangent bundle $\mathrm{U}\mathbb S^3 = \mathbb S^3 \times \mathbb S^2$ for 
 a minimal surface $f_{min}$ in $\mathbb S^3$, that is, the following relation holds:
 \begin{equation*}
2 ds^2_{Lag}= 4 e^{u}\, \mathrm{d} z\mathrm{d}\bar{z}
   =  \bigl(2 e^{\hat u} + 2|\mathcal{Q}|^{2} e^{-\hat u}\bigr)\, \mathrm{d} z\mathrm{d}\bar{z}
   = ds^2_{Sasaki}.
\end{equation*}
  \item  The converse statement in Theorem \ref{Thm corresponding} has been shown in \cite[Theorem 4.4]{CU} 
 under the assumption $|\hat \alpha| > 0$, through the PDE point view. 
 More generally, such correspondences of hypersurfaces in unit spheres and 
Lagrangian immersions in Grassmannians have been studied in \cite{P1997, VW2020}.
 \end{enumerate}

\end{Remark}
In order to highlight how minimal surfaces in $\mathbb{S}^3$ can be constructed from minimal Lagrangian surfaces in $Q_2$, we state the following corollary.
\begin{Corollary}\label{Cor fmin N}
Let $f: M \to Q_{2}$ be a minimal Lagrangian surface and $\hat{\mathfrak{f}}$ be its local lift defined above. Then the minimal surface $f_{min}$ and its dual minimal surface $N$ in $\mathbb{S}^{3}$ associated with $f$ are of the form
\begin{equation}
    f_{min} = \sqrt{2}\operatorname{Re}\hat{\mathfrak{f}}, \quad N = \sqrt{2} \operatorname{Im} \hat{\mathfrak{f}}.
\end{equation}
\end{Corollary}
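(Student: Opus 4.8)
The plan is to read the corollary directly off the correspondence established in Theorem~\ref{Thm corresponding}. First I would recall that the normalized lift $\hat{\mathfrak f}=e^{-i\varphi/2}\mathfrak f$ of \eqref{eq:newlift} lies in $\mathbb S^{7}$ and satisfies $\langle\hat{\mathfrak f},\hat{\mathfrak f}\rangle=0$. Writing $\hat{\mathfrak f}=a+ib$ with $a=\operatorname{Re}\hat{\mathfrak f}$ and $b=\operatorname{Im}\hat{\mathfrak f}$ real vectors in $\mathbb R^{4}$, these two identities are equivalent to $|a|=|b|=\tfrac1{\sqrt2}$ and $\langle a,b\rangle=0$, so $\sqrt2\,a$ and $\sqrt2\,b$ form an orthonormal pair in $\mathbb S^{3}\subset\mathbb R^{4}$. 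Hence $\sqrt2\operatorname{Re}\hat{\mathfrak f}$ and $\sqrt2\operatorname{Im}\hat{\mathfrak f}$ are legitimate candidates for a surface in $\mathbb S^{3}$ together with a unit normal field along it.

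Next I would match frames. The gauge $\mathcal G_{\lambda}$ of Subsection~\ref{sbsc:family} is block diagonal with the identity in its upper $2\times2$ block, so the first two columns of $\tilde{\mathcal F}_{\lambda}|_{\lambda=1}$ coincide with the first two columns of $\hat{\mathcal F}$, which by construction are $\tfrac1{\sqrt2}(\hat{\mathfrak f}+\bar{\hat{\mathfrak f}})=\sqrt2\operatorname{Re}\hat{\mathfrak f}$ and $-\tfrac{i}{\sqrt2}(\hat{\mathfrak f}-\bar{\hat{\mathfrak f}})=\sqrt2\operatorname{Im}\hat{\mathfrak f}$. These are exactly the first two slots of the $\mathrm{SO}(4)$-frame $\sigma$ of \eqref{eq: frame sigma}, whose first two columns are $f_{min}$ and $N$. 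By Theorem~\ref{Thm corresponding} the Maurer--Cartan form $\tilde\omega^{\lambda}|_{\lambda=1}$ coincides with the Maurer--Cartan form of $\sigma$; since $\tilde{\mathcal F}_{\lambda}|_{\lambda=1}$ and $\sigma$ are both $\mathrm{SO}(4)$-valued integrals of the same $\mathfrak{so}(4)$-valued $1$-form, they agree after normalizing away a constant left factor. Comparing columns then yields $f_{min}=\sqrt2\operatorname{Re}\hat{\mathfrak f}$ and $N=\sqrt2\operatorname{Im}\hat{\mathfrak f}$, and the minimality of $f_{min}$ with $N$ as unit normal is part of the conclusion of Theorem~\ref{Thm corresponding}.

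Finally, to justify calling $N$ the dual minimal surface, I would invoke the alternative $\mathrm{SO}(4)$-frame $\tilde\sigma$ of \eqref{eq: frame sigma1}, whose first two columns are $N$ and $f_{min}$ and which produces the conjugate lift $\bar{\hat{\mathfrak f}}$; this exhibits $N$ as a minimal surface in $\mathbb S^{3}$ (the dual of $f_{min}$) having $f_{min}$ as its unit normal, as already noted in the proof of Theorem~\ref{Thm corresponding}. There is no genuine obstacle here: the corollary is purely a transcription, in terms of the real and imaginary parts of $\hat{\mathfrak f}$, of the frame identifications carried out in that proof. The only delicate point is the bookkeeping—keeping straight which column of $\tilde{\mathcal F}_{\lambda}$ plays the role of $f_{min}$ and which plays the role of $N$ under the two frames $\sigma$ and $\tilde\sigma$, and tracking the factor $\sqrt2$ between the $\mathbb S^{3}$-normalization and the $\mathbb S^{7}$-normalization of $\hat{\mathfrak f}$.
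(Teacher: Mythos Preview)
Your proposal is correct and follows essentially the same approach as the paper. In fact, the paper gives no separate proof for this corollary: the formulas $f_{min}=\sqrt2\operatorname{Re}\hat{\mathfrak f}$ and $N=\sqrt2\operatorname{Im}\hat{\mathfrak f}$ are derived verbatim in the ``Conversely'' part of the proof of Theorem~\ref{Thm corresponding}, and the corollary is just a restatement; your write-up is a careful unpacking of exactly that argument, with the frame-matching and $\sqrt2$ bookkeeping made explicit.
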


\section{The loop group method for minimal Lagrangian surfaces}\label{sc:DPW}
 The PDE in \eqref{eq: sinh-Gordon equation} is merely
 the structure equation for a harmonic map from a Riemann surface into $\mathbb S^2$.
 Thus, it is natural to expect the existence of such a map associated with any minimal Lagrangian surface.
 Moreover, we will establish the loop group method for minimal Lagrangian surfaces following the approach in \cite{DPW}.

\subsection{Minimal Lagrangian surfaces and harmonic maps into $\mathbb S^2$}
\label{subsc:S2harm}
To obtain an associated harmonic map to $\mathbb S^2$, we 
use the following Lie group isomorphism \cite{H2024}
\[
 \SO = (\SU \times \SU)/\boldsymbol{Z}_2,
\]
see Appendix \ref{app: lie group lie algebra} for details.

For the Maurer-Cartan form $\omega$, we can use Lie algebra isomorphism $\mathfrak{so}(4) \cong \mathfrak{su
}(2) \oplus \mathfrak{su}(2)$, see also in Appendix \ref{app: lie group lie algebra}. Accordingly, we can also use $\Lambda \mathfrak{so}(4) \cong \Lambda \mathfrak{su}(2) \oplus \Lambda \mathfrak{su}(2)$, where 
\[
\Lambda \mathfrak{g} := \{ g : \mathbb S^1 \to \mathfrak g \}
\]
 denotes the \textit{loop algebra} of $\mathfrak g$, which is an infinite-dimensional Banach Lie
 algebra with respect to a suitable norm \cite{PS}.
 Hence the associated Maurer-Cartan form $\hat{\omega}^{\lambda}$ in \eqref{eq: M-C hat omega} can 
  be represented as follows:
\begin{equation}
\begin{aligned}\label{eq:hatu}
\hat{U}^{\lambda}= (\hat{U}_1^{\lambda}, \hat{U}_2^{\lambda})=&\left( \begin{pmatrix}
 \frac{1}{4}\hat{u}_{z} & -\frac{\sqrt{2}}{2}\lambda^{-1} e^{\hat{u}/2} \\
  \frac{\sqrt{2}}{2}\lambda^{-1}\hat{\alpha}e^{-\hat{u}/2} & -\frac{1}{4}\hat{u}_{z}
\end{pmatrix}, \quad \begin{pmatrix}
 \frac{1}{4}\hat{u}_{z} & -i\frac{\sqrt{2}}{2}\lambda^{-1} e^{\hat{u}/2} \\
  i\frac{\sqrt{2}}{2}\lambda^{-1}\hat{\alpha} e^{-\hat{u}/2} & -\frac{1}{4}\hat{u}_{z}
\end{pmatrix} \right), 
\end{aligned}
\end{equation}
and
\begin{equation}\label{eq:hatv}
\begin{aligned}
\hat{V}^{\lambda} =(\hat{V}_1^{\lambda}, \hat{V}_2^{\lambda})= &\left( \begin{pmatrix}
-\frac{1}{4}\hat{u}_{\bar{z}} & -\frac{\sqrt{2}}{2}\lambda \bar{\hat{\alpha}}e^{-\hat{u}/2} \\
  \frac{\sqrt{2}}{2} \lambda e^{\hat{u}/2} & \frac{1}{4}\hat{u}_{\bar{z}}
\end{pmatrix}, \quad \begin{pmatrix}
 -\frac{1}{4}\hat{u}_{\bar{z}} &i\frac{\sqrt{2}}{2}\lambda \bar{\hat{\alpha}}e^{-\hat{u}/2} \\
 -i\frac{\sqrt{2}}{2}\lambda e^{\hat{u}/2} & \frac{1}{4}\hat{u}_{\bar{z}}
\end{pmatrix} \right).
\end{aligned}
\end{equation}
 From the form of $\left( \hat{U}_{1}^{\lambda}, \hat{U}_{2}^{\lambda} \right)$ and $\left( \hat{V}_{1}^{\lambda}, \hat{V}_{2}^{\lambda} \right)$,  the corresponding pair of maps is 
 given by $(F_{\lambda}, F_{-i\lambda})$ such that 
\begin{equation}\label{eq: Maurer-Cartan form tilde12}
\hat{\omega}^{\lambda}_{1} 
= F_{\lambda}^{-1}\text{d}F_{\lambda} = \hat{U}_{1}^{\lambda} \text{d}z + \hat{V}_{1}^{\lambda} \text{d}\bar{z}, \quad 
\hat{\omega}^{\lambda}_{2} 
= F_{-i\lambda}^{-1}\text{d}F_{-i\lambda} = \hat{U}_{2}^{\lambda} \text{d}z + \hat{V}_{2}^{\lambda} \text{d}\bar{z}.
\end{equation}
Notice that $\hat{U}_{2}^{\lambda}$ (resp. $\hat{V}_{2}^{\lambda}$) can be obtained from $\hat{U}_{1}^{\lambda}$ (resp. $\hat{V}_{1}^{\lambda}$) by transforming $\lambda$ into $-i\lambda$.
For further simplification, we can choose $w := \sqrt{2} z$ as our new coordinate and $ds^{2}_{M} = 2e^{\check{u}} \text{d} w \text{d} \bar{w}$ as our new metric.
And ${Q} \text{d}w^{2} := \hat{\alpha} \text{d}z^{2}$ is our holomorphic quadratic differential under the new coordinate $w$.
In addition, the associated holomorphic quadratic differential $\lambda^{-2}\hat{\alpha}\text{d}z^{2}$ mentioned in Theorem \ref{thm2} becomes $\lambda^{-2}Q\text{d}w^{2}$.
From the above discussion, we obtain the following proposition.
\begin{Proposition}\label{prp:extendcorr}
 Let $\hat {\mathcal F}_{\lambda}$ be the extended frame of $\hat{\mathfrak f}$. Then there exists the pair of maps $(F_{\lambda}, F_{-i \lambda})$ such that 
 the Maurer-Cartan form $\hat{\omega}^{\lambda}_{1}$ of  $F_{\lambda}$ is given by \eqref{eq: Maurer-Cartan form tilde12}.  Conversely, 
 given a pair of maps $(F_{\lambda}, F_{-i \lambda})$ with the Maurer-Cartan form $\hat{\omega}^{\lambda}_{1}$ of $F_{\lambda}$ is given by \eqref{eq: Maurer-Cartan form tilde12}, there exists the extended frame $\hat {\mathcal F}_{\lambda}$ 
 of some minimal surface $\hat{\mathfrak f}$.
\end{Proposition}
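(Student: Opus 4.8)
The plan is to transport the explicit $\Lambda\mathfrak{so}(4)$-valued Maurer--Cartan form $\tilde\omega^{\lambda}=\tilde{\mathcal U}^{\lambda}\,\mathrm dz+\tilde{\mathcal V}^{\lambda}\,\mathrm d\bar z$ of the extended frame, given by \eqref{eq: tilde U}--\eqref{eq: tilde V}, through the loop algebra isomorphism $\Lambda\mathfrak{so}(4)\cong\Lambda\mathfrak{su}(2)\oplus\Lambda\mathfrak{su}(2)$ of Appendix~\ref{app: lie group lie algebra}, record the resulting pair componentwise, and integrate on the simply connected domain $\mathbb D$. For the forward direction I would first compute the image $(\omega_1^{\lambda},\omega_2^{\lambda})\in\Lambda\mathfrak{su}(2)\oplus\Lambda\mathfrak{su}(2)$ of $\tilde\omega^{\lambda}$: from the explicit formulas it is a pair of $\mathfrak{su}(2)$-valued $1$-forms whose $(1,0)$-parts carry the spectral parameter only through $\lambda^{-2}\hat\alpha$, whose $(0,1)$-parts only through $\lambda^{2}\bar{\hat\alpha}$, and with $\lambda$-independent diagonal ($\mathfrak{u}(1)$) parts. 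Applying the constant-in-$z$ gauge transformations $\operatorname{diag}(\lambda^{1/2},\lambda^{-1/2})$ and $\operatorname{diag}((i\lambda)^{1/2},(i\lambda)^{-1/2})$ to the two factors respectively — which preserves flatness and leaves all $z$-dependence untouched since these gauges have no $\mathrm dz$- or $\mathrm d\bar z$-component — turns the pair into exactly $\bigl(\hat U_1^{\lambda}\,\mathrm dz+\hat V_1^{\lambda}\,\mathrm d\bar z,\ \hat U_2^{\lambda}\,\mathrm dz+\hat V_2^{\lambda}\,\mathrm d\bar z\bigr)$ of \eqref{eq:hatu}--\eqref{eq:hatv}. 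The key bookkeeping observation, checked by direct substitution using $(-i\lambda)^{-1}=i\lambda^{-1}$, is that $\hat U_2^{\lambda},\hat V_2^{\lambda}$ are obtained from $\hat U_1^{\lambda},\hat V_1^{\lambda}$ by the replacement $\lambda\mapsto -i\lambda$; hence, after fixing a base point and the corresponding lift of the initial value, integrating $\omega_1^{\lambda}$ produces a map $F_\lambda$ with $F_\lambda^{-1}\mathrm dF_\lambda=\hat U_1^{\lambda}\,\mathrm dz+\hat V_1^{\lambda}\,\mathrm d\bar z$, and then $F_{-i\lambda}$ automatically integrates $\omega_2^{\lambda}$. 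This yields the asserted pair $(F_\lambda,F_{-i\lambda})$ with Maurer--Cartan form \eqref{eq: Maurer-Cartan form tilde12}, in which $F_\lambda$ is the twisted extended frame of a non-conformal harmonic map into $\mathbb S^{2}=\SU/\U$.

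For the converse I would run the same chain backwards. Given $(F_\lambda,F_{-i\lambda})$ with $F_\lambda^{-1}\mathrm dF_\lambda=\hat U_1^{\lambda}\,\mathrm dz+\hat V_1^{\lambda}\,\mathrm d\bar z$ as in \eqref{eq: Maurer-Cartan form tilde12}, the substitution observation again gives $F_{-i\lambda}^{-1}\mathrm dF_{-i\lambda}=\hat U_2^{\lambda}\,\mathrm dz+\hat V_2^{\lambda}\,\mathrm d\bar z$ for free, and expanding the flatness of $\omega_1^{\lambda}$ in powers of $\lambda$ shows that it is equivalent to $\hat\alpha$ being holomorphic together with the elliptic sinh-Gordon equation \eqref{eq: sinh-Gordon equation} for $\hat u$. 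Undoing the two diagonal gauges by $\operatorname{diag}(\lambda^{-1/2},\lambda^{1/2})$ and $\operatorname{diag}((i\lambda)^{-1/2},(i\lambda)^{1/2})$ and applying the inverse loop algebra isomorphism $\Lambda\mathfrak{su}(2)\oplus\Lambda\mathfrak{su}(2)\to\Lambda\mathfrak{so}(4)$ produces a flat $\Lambda\mathfrak{so}(4)$-valued $1$-form of precisely the shape \eqref{eq: tilde U}--\eqref{eq: tilde V}; integrating it on $\mathbb D$ gives $\hat{\mathcal F}_\lambda\colon\mathbb D\to\Lambda\SO_{\sigma}$. Taking $\tfrac{\sqrt2}{2}$ times the sum of the first column and $i$ times the second column, as in Subsection~\ref{sbsc:family}, yields a lift $\hat{\mathfrak f}$; horizontality and the Lagrangian condition are immediate from the block shape of \eqref{eq: tilde U}--\eqref{eq: tilde V}, and since $\mathrm d+\tilde\omega^{\lambda}$ is flat for every $\lambda\in\mathbb S^{1}$, Theorem~\ref{Thm1} shows that $\hat{\mathfrak f}$ projects to a minimal Lagrangian immersion, of which $\hat{\mathcal F}_\lambda$ is by construction the extended frame.

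I expect the main obstacle to be organizational rather than conceptual: keeping the spectral parameter honest through the isomorphism and the two distinct gauges, so that the $\lambda^{-2}$ of \eqref{eq: tilde U} is correctly converted to the $\lambda^{-1}$ of \eqref{eq:hatu}, and so that the second $\mathfrak{su}(2)$ factor genuinely corresponds to $F_{-i\lambda}$ rather than $F_{\lambda}$ or $F_{i\lambda}$. A related point requiring a word of care is that the gauges involve $\lambda^{1/2}$, so one should work with twisted loop groups throughout — $\hat{\mathcal F}_\lambda\in\Lambda\SO_\sigma$ corresponding to $F_\lambda$ in the twisted loop group $\Lambda\SU_\sigma$ — and one must track the residual $\boldsymbol{Z}_2$ in $\SO=(\SU\times\SU)/\boldsymbol{Z}_2$ together with the conjugation ambiguity already recorded in Theorem~\ref{Thm corresponding}; consequently the correspondence is cleanest once base points and their lifts are fixed once and for all.
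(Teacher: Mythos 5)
Your proposal is correct and follows essentially the same route as the paper, which obtains the proposition directly ``from the above discussion'', i.e.\ by pushing $\tilde\omega^{\lambda}$ through the isomorphism $\Lambda\mathfrak{so}(4)\cong\Lambda\mathfrak{su}(2)\oplus\Lambda\mathfrak{su}(2)$, normalizing by the constant diagonal gauges so that the $\lambda^{-2}$ of \eqref{eq: tilde U} becomes the $\lambda^{-1}$ of \eqref{eq:hatu}, observing that the second factor is the first with $\lambda\mapsto-i\lambda$, and integrating on the simply connected domain (with Theorem~\ref{Thm1} closing the converse). Your explicit tracking of the $\lambda^{1/2}$-gauges, the twisting, and the residual $\boldsymbol{Z}_2$ is consistent with, and somewhat more detailed than, what the paper records.
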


Now we consider the map $\check{f}: M \to \mathbb{S}^{2}$ and focus on the associated Maurer-Cartan form $\check{\omega}^{\lambda}$ induced by the local lift $F_{\lambda}$, where $\check{\omega}^{\lambda}$ can be written as follows:
\begin{equation}\label{eq: system}
    \check{\omega}^{\lambda} = F_{\lambda}^{-1}\text{d}F_{\lambda} = \check{U}^{\lambda} \text{d}w + \check{V}^{\lambda}\text{d}\bar{w},
\end{equation}
where
\begin{equation}
    \check{U}^{\lambda} = \begin{pmatrix}
 \frac{1}{4}\check{u}_{w} & -\frac{1}{2}\lambda^{-1}e^{\check{u}/2}  \\
  \lambda^{-1}Qe^{-\check{u}/2} & -\frac{1}{4}\check{u}_{w}
\end{pmatrix}, \quad \check{V}^{\lambda} = \begin{pmatrix}
-\frac{1}{4}\check{u}_{\bar{w}} & -\lambda \bar{Q}e^{-\check{u}/2} \\
  \frac{1}{2}\lambda e^{\check{u}/2} & \frac{1}{4}\check{u}_{\bar{w}}
\end{pmatrix}.
\end{equation}
The Maurer-Cartan form $\check \omega^{\lambda}$ is the same to the Maurer-Cartan form 
of the extended frame of a non-conformal harmonic map from a Riemann surface $M$ into 
$\mathbb S^2$. Thus the map $F_{\lambda}$ is called the \textit{extended frame} 
of a harmonic map $\check{f}$. 
In addition, let $F_{\lambda}$, $F_{-i\lambda}$ be two solutions of the system \eqref{eq: system} and define 
\begin{equation}\label{eq: fmin N}
    f_{min} := F_{\lambda} \begin{pmatrix}
 e^{\pi i/4} & 0 \\
 0 & e^{-\pi i/4}
\end{pmatrix} F_{-i\lambda}^{-1}, \quad N:= iF_{\lambda}\begin{pmatrix}
 e^{\pi i/4} & 0 \\
 0 & -e^{-\pi i/4}
\end{pmatrix}F_{-i\lambda}^{-1}.
\end{equation}
By the results in \cite{Bobenko1991, FKR2009}, $f_{min}$ is a minimal surface in $\mathbb{S}^{3}$ and normal $N$.

\subsection{The DPW method }\label{Subsection the DPW method}\label{sbsc:DPW}
In Section \ref{subsc:S2harm}, we have obtained 
the family of Maurer-Cartan forms $\check \omega^{\lambda}$ for 
a harmonic into $\mathbb S^2$ from a minimal Lagrangian surface $f$ in $Q_2$, 
thus we can apply the generalized Weierstrass type representation (the DPW method)
for harmonic maps in $\mathbb S^2$. The basic construction can be found in 
\cite{DH2003, FKR2009, SKKR}. First note that the extended frame $F_{\lambda}$ takes values in the 
loop group of $G = \SU$
\[
\Lambda G_{\sigma} = \{g : \mathbb S^1  \to G \mid  \sigma g (\lambda) = g(-\lambda) \},
\]
where $\sigma$ is the involution $\sigma (g) = \operatorname{Ad} \operatorname{diag}(1, -1)  (g)$ is the involution of the symmetric space $\mathbb S^2 
= \SU/\Uone$. By introducing a suitable topology, $\Lambda G_{\sigma}$ 
becomes a Banach Lie group and we call it the \textit{loop group} of 
$G$, see \cite{PS}. The extended frame $F_{\lambda}$ clearly takes values in the loop group of $\SU$, and the \textit{Birkhoff decomposition} (\cite{PS, DPW}) of $F_{\lambda}$
\[
 F_{\lambda} = F_{-} F_{+}\quad \mbox{with $F_- \in \Lambda_*^- G^{\mathbb C}_{\sigma}$
 and $F_+\in \Lambda^+ G^{\mathbb C}_{\sigma}$}
\]
yields the meromorphic dependence of $F_{-}$ \cite[Lemma 2.6]{DPW}. Here 
$G^{\mathbb C} = \SL$ and $\Lambda^{\pm} G^{\mathbb C}_{\sigma}$ 
denote the subgroups of $\Lambda G^{\mathbb C}_{\sigma}$ which can be 
extended to the inside (resp. outside) of unit disk in $\mathbb{C} P^1$ 
when the plus sign (resp. the minus sign) is chosen. The subscript $*$ denotes 
the identity normalization at $\lambda =0$.

Conversely, minimal Lagrangian surfaces in the complex quadric $Q_{2}$ can be constructed in the following four steps:
\begin{enumerate}
    \item[\textbf{Step 1:}] Solve the initial-value problem: 
    \begin{equation}
        d\Phi = \Phi \xi, \quad \Phi(z_{0}) = \Phi_{0} \in \Lambda \SL_{\sigma},
    \end{equation}
    to obtain a unique map $\Phi : \mathbb D  \to \Lambda \SL_{\sigma}$.
    \item[\textbf{Step 2:}] Compute the \textit{Iwasawa decomposition} (see \cite{PS, DPW})  
    of $\Phi$ pointwise on $\mathbb D$:
    \begin{equation}\label{eq: F lambda}
        \Phi = F_{\lambda} B, \quad F_{\lambda} \in \Lambda \SU_{\sigma}, \quad B \in \Lambda^{+} \SL_{\sigma},
    \end{equation}
    Then by \cite[Lemma 4.2]{DPW}, $F_{\lambda}$ is the extended frame of a harmonic map into 
    $\mathbb S^2$. Set the pair of maps given by another map $F_{-i\lambda}$ as
    \begin{equation}
        \left( F_{\lambda}, F_{-i\lambda} \right) \in \Lambda \SU_{\sigma}\times \Lambda \SU_{\sigma}.
    \end{equation}
    
    \item[\textbf{Step 3:}] Use the Loop group isomorphism 
    \begin{equation}\label{eq:loopisom}
        \Lambda \SO_{\sigma} \cong (\Lambda \SU_{\sigma} \times \Lambda \SU_{\sigma}) /\boldsymbol{Z}_2, 
    \end{equation}
    and Proposition \ref{prp:extendcorr}, one obtains 
    the extended frame $\mathcal{F}_{\lambda} \in \Lambda \SO_{\sigma} $ of 
    some minimal Lagrangian immersion into $Q_2$. 
 \item[\textbf{Step 4:}] Finally, by using the formula \eqref{eq:formula} in the 
 following proposition, we obtain 
 a family of minimal Lagrangian immersions $f^{\lambda}$ into $Q_2$.
\end{enumerate}
\begin{Remark}
The involution $\sigma$ of the twisted loop group $\Lambda \SO_{\sigma}$ is
   associated with the Hermitian symmetric space $ Q_2 = \SO/ \rm{SO}(2) \times \rm{SO}(2)$.
   It is explicitly given by $\sigma = \operatorname{Ad} \operatorname{diag} (1, 1, -1, -1)$ on $\SO$,
   and it is  different from the involution $\operatorname{Ad} \operatorname{diag} (1, -1)$ of $\SU$
   which is associated with the twisted loop group $\Lambda \SU_{\sigma}$.
   However, the homomorphism $\psi: \SU \times \SU \to \SO$ in \eqref{eq:psi} can be 
   naturally extended to the twisted loop group level as \eqref{eq:loopisom}, see the corresponding Maurer-Cartan form in 
   \eqref{eq:hatu} and \eqref{eq:hatv}.
\end{Remark}
\begin{Proposition}\label{Prop}
Let $F_{\lambda}$ be the extended frame defined above.
 Set 
 \begin{equation}\label{eq:Xlambda}
     X^{\lambda} = (X^{\lambda}_{ij}) := F_{\lambda}F_{-i\lambda}^{-1} , \quad Y^{\lambda} = (Y^{\lambda}_{ij}) := i F_{\lambda} \sigma_{3} F_{-i\lambda}^{-1}, \quad 
     \mbox{with} \quad \sigma_{3} = \begin{pmatrix} 1 & 0 \\
 0 & -1
\end{pmatrix}.
 \end{equation}
 Then the associated family $\{f^{\lambda} \}$ of $f$ can be represented by 
     \begin{equation}\label{eq:formula}
     f^{\lambda}=
\left[\left(
\operatorname{R} (X^{\lambda}_{11}) + i\operatorname{R} (Y^{\lambda}_{11}),  
\operatorname{I} (X^{\lambda}_{11}) + i\operatorname{I} (Y^{\lambda}_{11}), 
\operatorname{R} (X^{\lambda}_{12}) + i\operatorname{R} (Y^{\lambda}_{12}), 
\operatorname{I} (X^{\lambda}_{12}) + i\operatorname{I} (Y^{\lambda}_{12})
\right)
\right],
\end{equation}
where $\operatorname{R}$ and $\operatorname{I}$ denotes the real and the imaginary part, respectively.
\end{Proposition}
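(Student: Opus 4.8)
The plan is to obtain \eqref{eq:formula} by unwinding the two ingredients already in place: the construction \eqref{eq: fmin N} of the pair $(f_{min},N)$ from the $\SU$-valued extended frame, and the $\mathbb S^3$-correspondence of Corollary~\ref{Cor fmin N} (equivalently the proof of Theorem~\ref{Thm corresponding}), which recovers the minimal Lagrangian lift from a minimal surface in $\mathbb S^3$ and its unit normal. First I would recall how $f^\lambda$ is produced: starting from the extended frame $F_\lambda\in\Lambda\SU_\sigma$ of the harmonic map $\check f\colon M\to\mathbb S^2$, the pair $(F_\lambda,F_{-i\lambda})$, the loop group isomorphism \eqref{eq:loopisom}, and Proposition~\ref{prp:extendcorr} give the extended frame $\hat{\mathcal F}_\lambda\in\Lambda\SO_\sigma$ of a minimal Lagrangian immersion $f^\lambda$ into $Q_2$, and letting $\lambda$ run over $\mathbb S^1$ yields the associated family $\{f^\lambda\}$ of Theorem~\ref{thm2}. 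By \eqref{eq: fmin N} and the results of \cite{Bobenko1991,SKKR}, the matrices $X^\lambda=F_\lambda F_{-i\lambda}^{-1}$ and $Y^\lambda=iF_\lambda\sigma_3F_{-i\lambda}^{-1}$ are precisely the minimal surface $f^\lambda_{min}$ in $\mathbb S^3$ attached to $f^\lambda$ and its unit normal $N^\lambda$; note that $Y^\lambda$ is again $\SU$-valued, since $i\sigma_3=\operatorname{diag}(i,-i)\in\SU$.

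Next I would apply Corollary~\ref{Cor fmin N}, i.e.\ the identity $\hat{\mathfrak f}=\tfrac{\sqrt2}{2}(f_{min}+iN)$, to $f^\lambda$, so that a horizontal lift of $f^\lambda$ is $\hat{\mathfrak f}^\lambda=\tfrac{\sqrt2}{2}\bigl(f^\lambda_{min}+iN^\lambda\bigr)\in\mathbb C^4$ with $f^\lambda_{min},N^\lambda\in\mathbb S^3\subset\mathbb R^4$. It then remains to write the two summands in coordinates. Under the identification $\mathbb S^3\cong\SU$ used in Section~\ref{sc:DPW} --- compatibly with the homomorphism $\psi$ of Appendix~\ref{app: lie group lie algebra} --- an $\SU$-matrix is sent to the point of $\mathbb S^3\subset\mathbb C^2\cong\mathbb R^4$ given by its first row; hence $f^\lambda_{min}$ corresponds to $(X^\lambda_{11},X^\lambda_{12})$, i.e.\ to the $\mathbb R^4$-vector $\bigl(\operatorname{R}X^\lambda_{11},\operatorname{I}X^\lambda_{11},\operatorname{R}X^\lambda_{12},\operatorname{I}X^\lambda_{12}\bigr)$, and $N^\lambda$ to $\bigl(\operatorname{R}Y^\lambda_{11},\operatorname{I}Y^\lambda_{11},\operatorname{R}Y^\lambda_{12},\operatorname{I}Y^\lambda_{12}\bigr)$. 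Forming $f^\lambda_{min}+iN^\lambda$ entrywise in $\mathbb C^4=\mathbb C^2\oplus\mathbb C^2$ and discarding the scalar $\tfrac{\sqrt2}{2}$ when passing to $[\,\cdot\,]\in\mathbb C P^3$ produces exactly the right-hand side of \eqref{eq:formula}. That $\hat{\mathfrak f}^\lambda$ lies on the Hopf lift of $Q_2$, namely $|\hat{\mathfrak f}^\lambda|^2=1$ and $\langle\hat{\mathfrak f}^\lambda,\hat{\mathfrak f}^\lambda\rangle=0$, follows at once from $|f^\lambda_{min}|=|N^\lambda|=1$ and $N^\lambda\perp f^\lambda_{min}$, and is already contained in Theorem~\ref{Thm corresponding}.

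The step I expect to be the main obstacle is the bookkeeping of conventions connecting Section~\ref{sc:DPW} with the $\mathrm{SO}(4)$-frame picture of the first section: one must check that the normal $N^\lambda$ coming from the $\mathbb S^2$-formula \eqref{eq: fmin N} agrees, under the chosen identification $\mathbb S^3\cong\SU\subset\mathbb C^{2\times2}\cong\mathbb R^4$, with the normal field entering the frame $\sigma$ of \eqref{eq: frame sigma}, and in particular that the ``first row'' convention above and the sign carried by $i\sigma_3$ are exactly those that make \eqref{eq:formula} come out as stated; a convenient consistency test is that at $\lambda=1$ the formula must return the lift $\hat{\mathfrak f}$ of $f$ itself. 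This amounts to aligning the normalizations of \cite{Bobenko1991,SKKR} with those used in the proof of Theorem~\ref{Thm corresponding}; once that is pinned down, the remainder is the elementary computation indicated above.
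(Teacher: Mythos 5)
Your proposal is correct and amounts to the same argument the paper gives: the paper's proof is precisely ``apply the Lie group isomorphism of Appendix~\ref{app: lie group lie algebra} and compute,'' reading the first two columns of $\mathcal F_\lambda=\psi(F_\lambda,F_{-i\lambda})$ off the quaternionic formula \eqref{eq:psi} and then using $\hat{\mathfrak f}^{\lambda}=\tfrac{\sqrt2}{2}(\mathcal F_\lambda^1+i\mathcal F_\lambda^2)$, which is exactly your identification of $X^\lambda$ and $Y^\lambda$ with $f_{min}^\lambda$ and $N^\lambda$ under $\Sp\cong\SU\cong\mathbb S^3$ followed by Corollary~\ref{Cor fmin N}. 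Your packaging through the $\mathbb S^3$-correspondence is a legitimate (and slightly more geometric) way of organizing the same computation, and your consistency checks on the conventions are the right ones.
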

\begin{proof}
By the Lie group isomorphism given in Appendix \ref{app: lie group lie algebra} and a straightforward computation, we obtain the result.
\end{proof}

\begin{Corollary}\label{coro:formula}
Let $F_{\lambda}$ be the extended frame defined above.
Define a map 
\begin{equation}
\Phi_{\lambda}= 
(\phi_{\lambda}, \psi_{\lambda})
: \mathbb D \to  \mathbb S^2 \times \mathbb S^2
\end{equation}
by $(\phi_{\lambda}, \psi_{\lambda})
:= ( i F_{\lambda} \sigma_3 F_{\lambda}^{-1}, i F_{-i\lambda} \sigma_3 F_{-i\lambda}^{-1})$.
Then  $\{\Phi_{\lambda}\}_{\lambda \in \mathbb S^1}$ 
is a family of minimal Lagrangian surfaces.
\end{Corollary}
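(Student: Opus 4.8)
The plan is to read Corollary~\ref{coro:formula} as the restatement of Proposition~\ref{Prop} through the isometry $\iota\colon Q_2\xrightarrow{\ \sim\ }\mathbb{S}^2\times\mathbb{S}^2$, so that the analytic content is already in place and only a change of model remains. First I would make $\iota$ explicit, following Appendix~\ref{app: lie group lie algebra} and \cite{CU,WV2021}: identifying $\mathbb{R}^4\cong\mathbb{H}$, hence $\SU\cong\mathbb{S}^3\subset\mathbb{H}$, a horizontal lift written as $\hat{\mathfrak f}=\tfrac{\sqrt2}{2}(a+ib)$ with $a,b\in\mathbb{S}^3$ and $\langle a,b\rangle=0$ is sent by $\iota$ to the pair of unit imaginary quaternions produced from $a$ and $b$ by left and right quaternion multiplication. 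Two routine points have to be checked here: that this descends to a well-defined map on $Q_2$ (invariance under the residual ambiguity $a+ib\mapsto e^{i\theta}(a+ib)$, which a short quaternion computation confirms), and that $\iota$ is a holomorphic isometry onto $\mathbb{S}^2(1)\times\mathbb{S}^2(1)$, hence in particular a symplectomorphism for the product Kähler form entering the notion of ``minimal Lagrangian''.

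Next I would feed in the output of the DPW construction together with Proposition~\ref{Prop}. For the extended frame $F_\lambda$, the maps $f_{min}^\lambda:=F_\lambda F_{-i\lambda}^{-1}$ and $N^\lambda:=iF_\lambda\sigma_3 F_{-i\lambda}^{-1}$ are, by \eqref{eq: fmin N} and \cite{Bobenko1991,SKKR}, a minimal surface in $\mathbb{S}^3$ together with its unit normal, so by the first half of Theorem~\ref{Thm corresponding} the class $f^\lambda=\bigl[\tfrac{\sqrt2}{2}(f_{min}^\lambda+iN^\lambda)\bigr]$ is a minimal Lagrangian immersion into $Q_2$ — which is precisely the formula \eqref{eq:formula} of Proposition~\ref{Prop}. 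Since $\iota$ is an isometry it carries conformal harmonic maps to conformal harmonic maps (so preserves minimality), and since it is a symplectomorphism it preserves the Lagrangian condition; hence it suffices to compute $\iota(f^\lambda)$ and to note that $\lambda\mapsto F_\lambda$, and therefore $\lambda\mapsto\iota(f^\lambda)$, depends analytically on $\lambda\in\mathbb{S}^1$, so that we genuinely obtain a family.

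The remaining step is the only real computation. Using that in the $\SU$-picture quaternion conjugation is matrix inversion and quaternion multiplication is matrix multiplication, one finds $f_{min}^\lambda\,(N^\lambda)^{-1}=F_\lambda F_{-i\lambda}^{-1}\,F_{-i\lambda}(i\sigma_3)^{-1}F_\lambda^{-1}=-\,iF_\lambda\sigma_3 F_\lambda^{-1}$ and $(f_{min}^\lambda)^{-1}N^\lambda=F_{-i\lambda}F_\lambda^{-1}\,F_\lambda(i\sigma_3)F_{-i\lambda}^{-1}=iF_{-i\lambda}\sigma_3 F_{-i\lambda}^{-1}$, and under the identification of $\operatorname{Im}\mathbb{H}$ with $\mathbb{S}^2\subset i\,\mathfrak{su}(2)$ these become (up to the conventional factor $i$ and sign) $\phi_\lambda=F_\lambda\sigma_3 F_\lambda^{-1}$ and $\psi_\lambda=F_{-i\lambda}\sigma_3 F_{-i\lambda}^{-1}$; thus $\iota(f^\lambda)=\Phi_\lambda$ and the corollary follows from Proposition~\ref{Prop}, with $\{\Phi_\lambda\}$ isometric to the family $\{f^\lambda\}$ of Theorem~\ref{thm2}. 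I expect the genuine obstacle to be bookkeeping rather than geometry: keeping the conventions for $\iota$, for $\mathbb{R}^4\cong\mathbb{H}$, for $\SU\cong\mathbb{S}^3$, and for the two models $\mathbb{S}^2\subset\mathfrak{su}(2)$ versus $\mathbb{S}^2\subset i\,\mathfrak{su}(2)$ mutually consistent, so that the factors of $i$ and the signs come out exactly as in \eqref{eq:loopisom}--\eqref{eq:hatv} and \eqref{eq:formula}; a secondary point is to confirm that the orientation on the second $\mathbb{S}^2$-factor — equivalently the sign of the product symplectic form — is the one already fixed implicitly by Theorem~\ref{Thm corresponding} and Corollary~\ref{Cor fmin N}, so that it is $(\phi_\lambda,\psi_\lambda)$, and not $(\phi_\lambda,-\psi_\lambda)$, that is Lagrangian.
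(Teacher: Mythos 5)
Your argument reaches the statement by a genuinely different route from the paper. The paper's proof is a direct verification: starting from the Maurer--Cartan data \eqref{eq:hatu}--\eqref{eq:hatv} it computes $(\phi_{\lambda})_{z}=F_{\lambda}(\hat U_{1}^{\lambda}\sigma_{3}-\sigma_{3}\hat U_{1}^{\lambda})F_{\lambda}^{-1}$ and its conjugate, checks $|(\Phi_{\lambda})_{z}|^{2}=4e^{u}$ and $\langle(\Phi_{\lambda})_{\bar z},(\Phi_{\lambda})_{z}\rangle=0$ (conformality and the Lagrangian condition), and then verifies harmonicity $(\Phi_{\lambda})_{z\bar z}=-2e^{u}\Phi_{\lambda}$, which gives minimality via \eqref{eq: CU 1}. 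You instead transport the already-established $Q_{2}$ result through the isometry $Q_{2}\cong\mathbb S^{2}\times\mathbb S^{2}$: your identities $f_{min}^{\lambda}(N^{\lambda})^{-1}=-iF_{\lambda}\sigma_{3}F_{\lambda}^{-1}$ and $(f_{min}^{\lambda})^{-1}N^{\lambda}=iF_{-i\lambda}\sigma_{3}F_{-i\lambda}^{-1}$ are correct, and they do exhibit $\Phi_{\lambda}$ as the left/right Gauss map of the minimal surface $f_{min}^{\lambda}=X^{\lambda}$ with normal $N^{\lambda}=Y^{\lambda}$, i.e.\ as the image under the quaternionic isometry of the surface $f^{\lambda}=[f_{min}^{\lambda}+iN^{\lambda}]$ produced by Proposition~\ref{Prop} and Theorem~\ref{Thm corresponding}. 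What your approach buys is conceptual economy and a clean explanation of \emph{why} the corollary holds (it is the Castro--Urbano/Lawson Gauss-map picture of \eqref{eq:formula}); what the paper's computation buys is that the conformal factor, the Lagrangian condition, and minimality all drop out with the correct signs and normalizations, with no model-matching required.

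The one load-bearing point you defer rather than settle is the orientation of the second factor: the Lagrangian condition for $\omega=\pi_{1}^{*}\omega_{0}+\pi_{2}^{*}\omega_{0}$ is $\operatorname{Jac}(\phi_{\lambda})=-\operatorname{Jac}(\psi_{\lambda})$, and whether the quaternionic identification produces $(\phi_{\lambda},\psi_{\lambda})$ or $(\phi_{\lambda},-\psi_{\lambda})$ as the Lagrangian pair is exactly the sign you flag at the end without resolving. Since that sign is the entire content of ``Lagrangian'' in the product model, your proof is not complete until it is pinned down --- either by an explicit check on one example (the totally geodesic sphere suffices) or by the two-line computation of $\langle(\Phi_{\lambda})_{z},(\Phi_{\lambda})_{z}\rangle=\operatorname{tr}((\phi_{\lambda})_{z}(\phi_{\lambda})_{z})+\operatorname{tr}((\psi_{\lambda})_{z}(\psi_{\lambda})_{z})=0$ that the paper performs, which settles it without any quaternionic bookkeeping. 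The curvature normalization ($4\,ds^{2}_{M}=ds^{2}_{\Sigma}$, cf.\ Appendix~\ref{app: CU}) is likewise deferred but harmless, since minimality and the Lagrangian condition are scale-invariant. With those two checks supplied, your argument is a valid alternative proof.
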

\begin{proof}
By the definition of the maps $\phi_{\lambda}$ and $\psi_{\lambda}$, 
it is easy to check $\phi_{\lambda}, \psi_{\lambda} \in \mathfrak{su}(2) \cong \mathbb{R}^{3}$ for each $\lambda \in \mathbb{S}^{1}$ and we have
\begin{equation*}
    \la \phi_{\lambda}, \phi_{\lambda}  \ra = \frac{1}{2} \mathrm{tr}( \phi_{\lambda} \sigma_{2} \phi_{\lambda}^{T} \sigma_{2} )= 1, \quad \sigma_{2} = \begin{pmatrix}
 0 & -i \\
 i & 0
\end{pmatrix}, 
\end{equation*}
because of $F_{\lambda}, F_{-i\lambda} \in \SU$ for each $\lambda \in \mathbb{S}^{1}$,  
 and similarly for $|\psi_{\lambda}|^2=1$,  i.e. $\phi_{\lambda}, \psi_{\lambda} \in \mathbb{S}^{2}$. Moreover
\begin{equation*}
\begin{aligned}
    (\phi_{\lambda})_{z} &= iF_{\lambda}(\hat{U}^{\lambda}_{1}\sigma_{3} - \sigma_{3}\hat{U}^{\lambda}_{1})F_{\lambda}^{-1} = F_{\lambda} \begin{pmatrix}
 0 & i\sqrt{2}\lambda^{-1}e^{\hat{u}/2} \\
i\sqrt{2}\lambda^{-1}\hat{\alpha}e^{-\hat{u}/2} & 0
\end{pmatrix}F_{\lambda}^{-1}, \\
    (\phi_{\lambda})_{\bar{z}} &= iF_{\lambda}(\hat{V}^{\lambda}_{1}\sigma_{3} - \sigma_{3}\hat{V}^{\lambda}_{1})F_{\lambda}^{-1} = F_{\lambda} \begin{pmatrix}
 0 & i\sqrt{2}\lambda \bar{\hat{\alpha}}e^{-\hat{u}/2} \\
 i\sqrt{2}\lambda e^{\hat{u}/2} & 0
\end{pmatrix}F_{\lambda}^{-1}, 
\end{aligned} 
\end{equation*}
and similarly for $(\psi_{\lambda})_{z}$ and $(\psi_{\lambda})_{\bar z}$.
Then a straightforward computation shows that 
\begin{equation*}
\begin{aligned}
   \la (\phi_{\lambda})_{\bar z},  (\phi_{\lambda})_{z} \ra &= 
 \la (\psi_{\lambda})_{\bar z},  (\psi_{\lambda})_{z} \ra = 2 e^{u}, \\
  \la (\Phi_{\lambda})_{\bar z}, (\Phi_{\lambda})_{z}\ra   &=\left \langle (\phi_{\lambda})_{\bar{z}}, (\phi_{\lambda})_{z} \right \rangle + \left \langle (\psi_{\lambda})_{\bar{z}}, (\psi_{\lambda})_{z} \right \rangle = 4e^{u},\\
        \left\langle (\Phi_{\lambda})_{z}, (\Phi_{\lambda})_{z}  \right \rangle &=  \frac{1}{2}\mathrm{tr} \left( (\phi_{\lambda})_{z} \sigma_{2} (\phi_{\lambda})_{z}^{T} \sigma_{2} \right) + \frac{1}{2}\mathrm{tr} \left( (\psi_{\lambda})_{z} \sigma_{2} (\psi_{\lambda})_{z}^{T} \sigma_{2} \right) = 0
\end{aligned}    
\end{equation*} 
hold. That is, $\Phi_{\lambda}$ is a conformal immersion, and moreover,  it is Lagrangian.
Then by  another straightforward computation shows that 
\begin{equation*}
\begin{aligned}
    (\phi_{\lambda})_{z\bar{z}} &= iF_{\lambda}\left( \left(\hat{V}_{1}^{\lambda}\hat{U}_{1}^{\lambda} + (\hat{U}_{1}^{\lambda})_{\bar{z}} \right)\sigma_{3} - \hat{U}_{1}^{\lambda}\sigma_{3}\hat{V}_{1}^{\lambda} - \hat{V}^{\lambda}_{1}\sigma_{3}\hat{U}_{1}^{\lambda} + \sigma_{3}\left( -(\hat{U}^{\lambda}_{1})_{\bar{z}} + \hat{U}^{\lambda}_{1}\hat{V}^{\lambda}_{1}  \right)  \right)F_{\lambda}^{-1} \\
                                &= iF_{\lambda}\left( -2e^{u}\sigma_{3} \right)F_{\lambda}^{-1} =-2e^{u} \phi_{\lambda}, 
 \end{aligned}
 \end{equation*}
 holds and similarly $(\psi_{\lambda})_{z\bar{z}} = -2e^{u} \psi_{\lambda}$ follows.
Thus $(\Phi_{\lambda})_{z \bar z} = -2e^{u} \Phi_{\lambda}$ holds, and $\Phi_{\lambda}$ is harmonic, and 
it is minimal.\footnote{It can be also seen by \eqref{eq: CU 1} in Appendix \ref{app: CU}.}
\end{proof}

\section{Examples of minimal Lagrangian surfaces through the 
DPW method}\label{sc:Ex}
By using the DPW method introduced in Section \ref{sc:DPW}, we will construct minimal Lagrangian surfaces 
in $Q_2$. 
\subsection{Basic examples}
The basic examples are the totally geodesic spheres and the totally geodesic tori.
\subsubsection{Totally geodesic sphere}
Define
\begin{equation*}
    \xi = \lambda^{-1} \begin{pmatrix}
 0 & 1\\
 0 & 0
\end{pmatrix}\text{d}z
\end{equation*}
for $z \in \mathbb{C}$.
By using this holomorphic potential, we can construct a family of totally geodesic spheres in $Q_{2}$ as follows, see also \cite{CU}: 

It is easy to solve the ODE $d \Phi = \Phi \xi$ by $\Phi = \exp ( z \xi/ \text{d}z)$ with $\Phi (0)=\id$.
 Moreover,  the Iwasawa decomposition of $\Phi = F_{\lambda} B$ is given by 
\begin{equation*}
    F_{\lambda} = \frac{1}{\sqrt{1 + |z|^{2}}}\begin{pmatrix}
 1 & z\lambda^{-1} \\
 -\bar{z}\lambda & 1
\end{pmatrix}. 
\end{equation*}
By the loop group isomorphism $\Lambda \SO_{\sigma} \cong (\Lambda \SU_{\sigma} \times \Lambda \SU_{\sigma}) /\boldsymbol{Z}_2$, we have
\begin{equation*}
    \mathcal{F}_{\lambda} =  \frac{1}{1 + |z|^{2}} \begin{pmatrix}
 1 & -|z|^{2} & \ast & \ast \\
 -|z|^{2} & 1 & \ast & \ast \\
 \frac{1}{2}\left( z\lambda^{-1} + \bar{z}\lambda - i\left( z\lambda^{-1} -\bar{z}\lambda \right) \right) & \frac{1}{2}\left( z\lambda^{-1} + \bar{z}\lambda - i\left( z\lambda^{-1} -\bar{z}\lambda \right) \right) & \ast & \ast \\
 -\frac{1}{2}\left( z\lambda^{-1} + \bar{z}\lambda + i\left( z\lambda^{-1} -\bar{z}\lambda \right) \right) & -\frac{1}{2}\left( z\lambda^{-1} + \bar{z}\lambda + i\left( z\lambda^{-1} -\bar{z}\lambda \right) \right) & \ast & \ast
\end{pmatrix}.
\end{equation*}
Finally, by the first and second columns of $\mathcal{F}_{\lambda}$, we can obtain a family of totally geodesic spheres $\{ f^{\lambda} \}$ parameterized by $\lambda \in \mathbb{S}^{1}$
\begin{equation*}
f^{\lambda}= 
\left[
    \begin{pmatrix}
1 - i|z|^{2} \\
-|z|^{2} + i \\
z\lambda^{-1} + i\bar{z}\lambda \\
-\bar{z}\lambda - iz\lambda^{-1}
\end{pmatrix}
\right].
\end{equation*}
\subsubsection{Totally geodesic torus}
Define
\begin{equation*}
    \xi = \lambda^{-1} \begin{pmatrix}
 0 & 1\\
 1 & 0
\end{pmatrix}\text{d}z
\end{equation*}
for $z \in \mathbb{C}$.
By using this holomorphic potential, we can construct a family of totally geodesic torus in $Q_{2}$
as follows, see also in \cite{CU}:

 It is easy to solve the ODE $d \Phi = \Phi \xi$ by $\Phi = \exp ( z \xi/ \text{d}z)$ with $\Phi (0)=\id$.
 Moreover,  the Iwasawa decomposition of $\Phi = F_{\lambda} B$ is given by 
\begin{equation*}
    F_{\lambda} = \begin{pmatrix}
 \cosh (\lambda^{-1}z - \bar{z}\lambda) & \sinh (\lambda^{-1}z - \bar{z}\lambda)\\
 \sinh (\lambda^{-1}z - \bar{z}\lambda) & \cosh (\lambda^{-1}z - \bar{z}\lambda)
\end{pmatrix}.
\end{equation*}
By the loop group isomorphism $\Lambda \SO_{\sigma} \cong (\Lambda \SU_{\sigma} \times \Lambda \SU_{\sigma}) /\boldsymbol{Z}_2$, we have
\begin{equation*}
    \mathcal{F}_{\lambda} = \begin{pmatrix}
 \cos \left( \lambda^{-1}z + \bar{z}\lambda + i(\lambda^{-1}z - \bar{z}\lambda) \right) & 0 & \ast & \ast \\
 0 & \cos \left( \lambda^{-1}z + \bar{z}\lambda - i(\lambda^{-1}z - \bar{z}\lambda) \right) & \ast & \ast \\
 0 & \sin \left( \lambda^{-1}z + \bar{z}\lambda - i(\lambda^{-1}z - \bar{z}\lambda) \right) & \ast & \ast \\
 -\sin \left( \lambda^{-1}z + \bar{z}\lambda + i(\lambda^{-1}z - \bar{z}\lambda) \right) & 0 & \ast & \ast
\end{pmatrix}
\end{equation*}
Finally, by the first and second columns of $\mathcal{F}_{\lambda}$, we obtain a family of totally geodesic tori $\{ f^{\lambda} \}$ parameterized by $\lambda \in \mathbb{S}^{1}$
\begin{equation}
f^{\lambda}=   \left[
\begin{pmatrix}
\cos \left( \lambda^{-1}z + \bar{z}\lambda + i(\lambda^{-1}z - \bar{z}\lambda) \right)  \\
i\cos \left( \lambda^{-1}z + \bar{z}\lambda - i(\lambda^{-1}z - \bar{z}\lambda) \right) \\

i\sin \left( \lambda^{-1}z + \bar{z}\lambda - i(\lambda^{-1}z - \bar{z}\lambda) \right) \\
-\sin \left( \lambda^{-1}z + \bar{z}\lambda + i(\lambda^{-1}z - \bar{z}\lambda) \right)
\end{pmatrix}\right].
\end{equation}

\subsection{Equivariant and radially symmetric examples}
 We now show two new examples of minimal Lagrangian surfaces in $Q_2$.
\subsubsection{$\mathbb R$-equivariant minimal Lagrangian surfaces}\label{subsub:equiv}
\begin{Definition}[$\mathbb R$-equivariant potentials, \cite{SKKR}]
 Define 
 \begin{equation}\label{eq: Delaunay potential}
    \xi = D(\lambda) \frac{\mathrm{d} z}{z}, \quad \text{where} \quad D(\lambda) = \begin{pmatrix}
 c & a\lambda^{-1} + b\lambda \\
 a\lambda + b\lambda^{-1} & -c
\end{pmatrix}, 
\end{equation}
with $a, b, c \in \R$ for $z$ in $\mathbb{D} = \mathbb{C} \setminus \{ 0 \}$. 
We call such potentials the \textit{Equivariant potentials}.
\end{Definition}
 It is easy to see that $\Phi = \exp (\log z \cdot D)$ is the unique solution of $d \Phi = \Phi \xi$ 
 with the initial condition  $\Phi|_{z=0} = \id$.
 Let 
 \[
 \Phi = F_{\lambda} B 
 \]
 be the Iwasawa decomposition of $\Phi$ (see below for the explicit form of $F_{\lambda}$ for $c=0$). 
 As discussed in \cite[Section 2.5]{FKR2009}, by the rotation of the domain $z \to e^{i\theta} \cdot z$,
 the following transformation rule of $F_{\lambda}$ follows: 
\begin{equation}\label{eq: equivariant frame}
    F_{\lambda}\left( e^{i\theta} \cdot z , \, e^{-i\theta} \cdot \bar{z} , \, \lambda \right) = \exp \left( i\theta D (\lambda) \right) \cdot F_{\lambda}\left( z ,\, \bar{z} ,\, \lambda \right). 
\end{equation}
 Note that $i \theta D (\lambda)$ takes values in $\Lambda \mathfrak{su} (2)_{\sigma}$ and thus 
 $\exp \left( i\theta D (\lambda) \right)$ takes values in $\Lambda \mathrm{SU} (2)_{\sigma}$.

  We now recall the definition of an equivariant surface, see for an example \cite{DW2024}: 
  Let $G$ be a real Lie group and $K$ a closed subgroup. Let $M$ denote a connected Riemann surface and $f : M \to G/K$ a differentiable map. Let $g_{t}$ be a one-parameter group of biholomorphic maps of $M$. Then $f$ is called \textit{$\mathbb R$-equivariant} with regard to $g_{t}$ if there exists a one-parameter group $R(t) : \mathbb{R} \to G$ such that 
    \begin{equation*}
        f(g_{t} \cdot p) = R(t) f(p)
    \end{equation*}
    for all $p \in M$ and $t \in \mathbb{R}$.
Then a straightforward computation shows that the minimal Lagrangian surface constructed 
by the equivariant potential $\xi$ in \eqref{eq: Delaunay potential} is an equivariant surface.
\begin{Proposition}
    Let $\xi$ be an $\mathbb R$-equivariant potential defined in \eqref{eq: Delaunay potential}
    and let $F_{\lambda} \in \Lambda \SU_{\sigma}$ for $\lambda \in \mathbb{S}^{1}$ be the corresponding
    extended frame.
    Then the minimal Lagrangian surface $f^{\lambda} : M \to Q_{2}$ constructed by $\left( F_{\lambda }, F_{-i\lambda} \right)$ is equivariant, that is,
    \[
     \hat {\mathfrak f}^{\lambda}\left( e^{i\theta} \cdot z , \, e^{-i\theta} \cdot \bar{z} , \, \lambda \right)=
     \psi\left((\exp(i\theta D(\lambda) ), \exp(-i\theta D(-i\lambda))\right)\hat{\mathfrak f}^{\lambda}(z, \bar{z}, \lambda) )
    \]
    holds, where $\hat {\mathfrak f}^{\lambda}$ is the horizontal lift of $f^{\lambda}$ in $\mathbb C^4$ and 
    $\psi : \Lambda \SU_\sigma \times \Lambda \SU_\sigma \to \Lambda \SO_\sigma$ is the loop group homomorphism.
\end{Proposition}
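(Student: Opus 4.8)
The plan is to transport the equivariance of the extended frame through the reconstruction $(F_{\lambda},F_{-i\lambda})\mapsto\hat{\mathfrak f}^{\lambda}$, and then to read off the one-parameter subgroup of $\SO$ acting on the lift. First I would pin down two frame identities. The rule \eqref{eq: equivariant frame}, namely $F_{\lambda}(e^{i\theta}\cdot z, e^{-i\theta}\cdot\bar z,\lambda)=\exp(i\theta D(\lambda))\,F_{\lambda}(z,\bar z,\lambda)$, is already available; it rests on $\Phi=\exp(\log z\, D(\lambda))$ together with $\log(e^{i\theta}z)=\log z+i\theta$ and the uniqueness of the Iwasawa splitting, the only delicate point being that $i\theta D(\lambda)$ takes values in $\Lambda\mathfrak{su}(2)_{\sigma}$ — which follows from $a,b,c\in\R$ and $\bar\lambda=\lambda^{-1}$ on $\mathbb{S}^{1}$, giving both $iD(\lambda)^{*}=-iD(\lambda)$ and $\operatorname{Ad}\operatorname{diag}(1,-1)\,D(\lambda)=D(-\lambda)$. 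Since $\lambda\mapsto -i\lambda$ carries $\mathbb{S}^{1}$ to itself, specializing \eqref{eq: equivariant frame} along this substitution gives $F_{-i\lambda}(e^{i\theta}\cdot z, e^{-i\theta}\cdot\bar z,\lambda)=\exp(i\theta D(-i\lambda))\,F_{-i\lambda}(z,\bar z,\lambda)$, and the same reality check for $D(-i\lambda)$ shows $\exp(i\theta D(-i\lambda))\in\Lambda\SU_{\sigma}$ as well.

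Next I would feed these into the reconstruction. By Proposition \ref{prp:extendcorr}, \eqref{eq: fmin N}, Proposition \ref{Prop} and Theorem \ref{Thm corresponding}, the horizontal lift is $\hat{\mathfrak f}^{\lambda}=\frac{\sqrt2}{2}\bigl(f_{min}^{\lambda}+i\,N^{\lambda}\bigr)$ with $f_{min}^{\lambda}=F_{\lambda}F_{-i\lambda}^{-1}$ and $N^{\lambda}=i\,F_{\lambda}\sigma_{3}F_{-i\lambda}^{-1}$, viewed as $2\times2$ matrices and hence as vectors in $\C^{4}$ through the identification underlying $\psi$. Substituting the two frame identities and using $\bigl(\exp(i\theta D(-i\lambda))F_{-i\lambda}\bigr)^{-1}=F_{-i\lambda}^{-1}\exp(-i\theta D(-i\lambda))$ gives $f_{min}^{\lambda}(e^{i\theta}\cdot z,\dots)=\exp(i\theta D(\lambda))\,f_{min}^{\lambda}(z,\dots)\,\exp(-i\theta D(-i\lambda))$, and the identical relation for $N^{\lambda}$, hence
\[ \hat{\mathfrak f}^{\lambda}(e^{i\theta}\cdot z, e^{-i\theta}\cdot\bar z,\lambda)=\exp(i\theta D(\lambda))\;\hat{\mathfrak f}^{\lambda}(z,\bar z,\lambda)\;\exp(-i\theta D(-i\lambda)). \]
The asymmetry of the two exponentials here is genuine: $F_{-i\lambda}$ enters $f_{min}^{\lambda}$ and $N^{\lambda}$ only through its inverse, so a left multiplication of $F_{-i\lambda}$ becomes a right multiplication of $\hat{\mathfrak f}^{\lambda}$.

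Finally I would translate this two-sided matrix action into the $\psi$-action. Using the explicit description of $\psi:\Lambda\SU_{\sigma}\times\Lambda\SU_{\sigma}\to\Lambda\SO_{\sigma}$ from Appendix \ref{app: lie group lie algebra}, the map $q\mapsto\exp(i\theta D(\lambda))\,q\,\exp(-i\theta D(-i\lambda))$ on $\C^{4}\cong\C^{2\times2}$ is precisely the linear action of $\psi\bigl((\exp(i\theta D(\lambda)),\exp(-i\theta D(-i\lambda)))\bigr)$, which is the asserted identity. Since $\theta\mapsto\exp(i\theta D(\lambda))$ and $\theta\mapsto\exp(-i\theta D(-i\lambda))$ are one-parameter subgroups and $\psi$ is a group homomorphism, $R(\theta):=\psi\bigl((\exp(i\theta D(\lambda)),\exp(-i\theta D(-i\lambda)))\bigr)$ is a one-parameter subgroup of $\SO$, while $g_{\theta}:z\mapsto e^{i\theta}\cdot z$ is a one-parameter group of biholomorphisms of $M=\C\setminus\{0\}$; thus $f^{\lambda}(g_{\theta}\cdot p)=R(\theta)\,f^{\lambda}(p)$ and $f^{\lambda}$ is $\R$-equivariant.

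The structure equations and the explicit entries of $F_{\lambda}$ are not needed for this. I expect the only real obstacle to be the bookkeeping in the last step: keeping the substitution $\lambda\mapsto -i\lambda$ consistent with the $\sigma$-twisting of the loop groups, and matching the side on which each $\SU$-factor acts against the sign carried by $\theta$ inside $\psi$ — in particular one must check that the identification $\C^{4}\cong\C^{2\times2}$ fixed in Appendix \ref{app: lie group lie algebra} is the one for which two-sided multiplication by $(g,k)$ literally coincides with the $\psi((g,k))$-action on $\C^{4}$.
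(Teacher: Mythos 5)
Your proposal is correct and follows essentially the same route as the paper: establish the transformation rule \eqref{eq: equivariant frame} for $F_{\lambda}$ (and its specialization at $-i\lambda$), push it through $X^{\lambda}=F_{\lambda}F_{-i\lambda}^{-1}$ and $Y^{\lambda}=iF_{\lambda}\sigma_{3}F_{-i\lambda}^{-1}$ to get the two-sided action $\exp(i\theta D(\lambda))\,(\cdot)\,\exp(-i\theta D(-i\lambda))$ on $\hat{\mathfrak f}^{\lambda}$, and identify this with the $\psi$-action on $\mathbb C^{4}$ via the quaternionic model of Appendix \ref{app: lie group lie algebra}. The bookkeeping point you flag at the end (matching left/right multiplication and the conjugation built into $\psi(p,q)x=px\bar q$ against the sign of $\theta$ in the second slot) is indeed the only delicate spot, and it is a notational matter in how the paper states the action rather than a gap in your argument.
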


By Theorem 3 in \cite{SKKR}, we can obtain that 
\begin{equation*}
   F_{\lambda} = \begin{pmatrix}
 \sqrt{\frac{4ab\lambda^{2} +v^{2}}{2v(a\lambda^{2} + b)}}\cosh (tz - t\mathbf{f}) & \frac{\lambda v'\cosh(tz - t\mathbf{f})}{\sqrt{2v(a\lambda^{2} + b)(4ab\lambda^2+ v^{2})}}+\sqrt{\frac{2v(a + b\lambda^{2})}{4ab\lambda^2 + v^{2}}}\sinh (tz - t\mathbf{f}) \\
 \sqrt{\frac{4ab\lambda^2 + v^{2}}{2v(a + b\lambda^{2})}}\sinh(tz-t\mathbf{f}) & \frac{\lambda v'\sinh(tz - t\mathbf{f})}{\sqrt{2v(a + b\lambda^{2})(4ab\lambda^2 + v^{2})}}+\sqrt{\frac{2v(a\lambda^{2} + b)}{4ab\lambda^2 + v^{2}}}\cosh(tz-t\mathbf{f})
\end{pmatrix},
\end{equation*}
where $a , b \in \mathbb{R} \setminus \{0\}$ and $c =0$, and the non-constant Jacobian elliptic function $v = v(x)$ and $\mathbf{f} = \mathbf{f}(x)$ are given by 
\begin{equation*}
    v'^{2} = -(v^{2} - 4a^{2})(v^{2} - 4b^{2}), \quad v(0) = 2b, \quad \mathbf{f}(x) = \int^{x}_{0} \frac{2\text{d} s}{1 + (4ab\lambda^2)^{-1}v^{2}(s)},
\end{equation*}
and 
\begin{equation*}
    t = \sqrt{(a\lambda + b\lambda^{-1})(a\lambda^{-1} + b \lambda)}.
\end{equation*}
It is also easy to compute the map $F_{-i\lambda}$. Then by Proposition \ref{Prop}, we obtain the explicit form of this equivariant surface in $Q_{2}$.

 Note that the resulting $\mathbb R$-equivariant minimal Lagrangian surface $f^{\lambda}$ is 
 only defined on the universal cover of $\mathbb C \setminus \{0\}$ for a general choices of $a, b$ and $c$. 
 We now give the
 sufficient conditions such that $f^{\lambda}$ is well-defined on $\mathbb C \setminus \{0\}$, that is,
 $f^{\lambda}$ is a topologically cylinder:
 \begin{Proposition}\label{prp:closing}
 The resulting $\mathbb R$-equivariant minimal Lagrangian surface $f^{\lambda}|_{\lambda = \lambda_0}$ is well-defined on $\mathbb C \setminus \{0\}$ if the eigenvalues of $D(\lambda_0)$ and $D(- i \lambda_0)$ are integers or 
 half-integers.
  \end{Proposition}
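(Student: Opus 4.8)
The plan is to compute the monodromy of the extended frame around the puncture $z=0$ and to impose that it act trivially on the projective image. First I would note that, for the $\mathbb{R}$-equivariant potential $\xi = D(\lambda)\,z^{-1}\mathrm{d}z$ of \eqref{eq: Delaunay potential}, the solution of $\mathrm{d}\Phi=\Phi\xi$ with $\Phi|_{z=0}=\id$ is the multivalued map $\Phi=\exp\!\bigl(\log z\,D(\lambda)\bigr)$; continuing $z$ once around $0$ replaces $\log z$ by $\log z+2\pi i$, i.e.\ multiplies $\Phi$ on the left by $\exp\!\bigl(2\pi i D(\lambda)\bigr)$. By the remark after \eqref{eq: equivariant frame} we have $2\pi i D(\lambda)\in\Lambda\mathfrak{su}(2)_{\sigma}$, so $\exp\!\bigl(2\pi i D(\lambda)\bigr)\in\Lambda\SU_{\sigma}$, and by uniqueness of the Iwasawa decomposition this left factor is absorbed entirely into the unitary part $F_{\lambda}$, giving $F_{\lambda}\mapsto \exp\!\bigl(2\pi i D(\lambda)\bigr)F_{\lambda}$ — this is the $\theta=2\pi$ case of \eqref{eq: equivariant frame}. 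Hence, by the equivariance proposition above, continuing once around $0$ sends the horizontal lift to $\hat{\mathfrak f}^{\lambda}\mapsto M(\lambda)\,\hat{\mathfrak f}^{\lambda}$, where
\[
M(\lambda)=\psi\!\bigl(\exp(2\pi i D(\lambda)),\ \exp(-2\pi i D(-i\lambda))\bigr)\in\Lambda\SO_{\sigma}.
\]

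Since $\SO$ acts on $\mathbb{C}P^{3}\supset Q_{2}$ through $\SO\subset\mathrm{U}(4)$ with kernel precisely the scalar matrices $\{\pm I_{4}\}$, the map $f^{\lambda}|_{\lambda=\lambda_{0}}=[\hat{\mathfrak f}^{\lambda_{0}}]$ descends from the universal cover of $\mathbb{C}\setminus\{0\}$ to $\mathbb{C}\setminus\{0\}$ whenever $M(\lambda_{0})\in\{\pm I_{4}\}$, for then $[\,M(\lambda_{0})\hat{\mathfrak f}^{\lambda_{0}}\,]=[\,\hat{\mathfrak f}^{\lambda_{0}}\,]$ identically. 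Next I would check that $M(\lambda_{0})\in\{\pm I_{4}\}$ under the stated hypothesis: if $\exp(2\pi i D(\lambda_{0}))=\varepsilon_{1}I_{2}$ and $\exp(2\pi i D(-i\lambda_{0}))=\varepsilon_{2}I_{2}$ with $\varepsilon_{1},\varepsilon_{2}\in\{\pm1\}$, then also $\exp(-2\pi i D(-i\lambda_{0}))=\varepsilon_{2}I_{2}$, and since $\psi$ is a homomorphism with $\psi(\varepsilon_{1}I_{2},\varepsilon_{2}I_{2})=\varepsilon_{1}\varepsilon_{2}I_{4}$ one gets $M(\lambda_{0})=\varepsilon_{1}\varepsilon_{2}I_{4}$.

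It remains to identify when $\exp(2\pi i D(\mu))\in\{\pm I_{2}\}$ for $\mu\in\mathbb{S}^{1}$. For such $\mu$ the matrix $D(\mu)$ is trace-free and Hermitian, because $a,b,c\in\mathbb{R}$ and $\mu^{-1}=\bar\mu$ make its off-diagonal entries $a\mu^{-1}+b\mu$ and $a\mu+b\mu^{-1}$ complex conjugates; hence $D(\mu)$ is diagonalizable with real eigenvalues $\pm\nu(\mu)$, where $\nu(\mu)=\sqrt{c^{2}+|a\mu^{-1}+b\mu|^{2}}\ge0$, and $\exp(2\pi i D(\mu))$ has eigenvalues $e^{\pm2\pi i\nu(\mu)}$. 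Thus $\exp(2\pi i D(\mu))=I_{2}$ iff $\nu(\mu)\in\mathbb{Z}$ and $\exp(2\pi i D(\mu))=-I_{2}$ iff $\nu(\mu)\in\mathbb{Z}+\tfrac12$, so $\exp(2\pi i D(\mu))\in\{\pm I_{2}\}$ iff $\nu(\mu)\in\tfrac12\mathbb{Z}$, i.e.\ iff the eigenvalues $\pm\nu(\mu)$ of $D(\mu)$ are integers or half-integers. Applying this with $\mu=\lambda_{0}$ and with $\mu=-i\lambda_{0}$ and feeding the result into the previous paragraph yields the claim.

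The step I expect to require the most care is the first one: justifying that the generically irrational monodromy $\exp(2\pi i D(\lambda))$ passes through the Iwasawa splitting of $\Phi\in\Lambda\SL_{\sigma}$ into the $F_{\lambda}$-factor and not into the $\Lambda^{+}$-factor — this uses that $D$ is $\sigma$-twisted and $\mathfrak{su}(2)$-valued on $\mathbb{S}^{1}$, together with uniqueness of the Iwasawa decomposition (this is the content behind the transformation rule \eqref{eq: equivariant frame} and \cite[Section 2.5]{FKR2009}). Everything after that is elementary: the double cover $\SU\times\SU\to\SO$ and the diagonalization of $2\times2$ trace-free Hermitian matrices. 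I would also remark that only sufficiency is claimed — the hypothesis forces the frame's monodromy to be central in $\SO$, which is strictly stronger than merely closing up the image surface — so no converse is needed, and in particular one need not worry whether the immersion is full.
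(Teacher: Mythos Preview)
Your proof is correct and is exactly the argument the paper has in mind. The paper itself does not spell out a proof of this proposition: it states the result immediately after the equivariance formula \eqref{eq: equivariant frame} and the preceding proposition, and the subsequent Remark makes clear the intended mechanism --- when one of $\exp(2\pi i D(\lambda_0))$, $\exp(2\pi i D(-i\lambda_0))$ is $+I_2$ and the other is $-I_2$, the horizontal lift acquires a global sign under the monodromy but the point in $Q_2$ is unchanged. Your computation of $M(\lambda_0)=\psi(\varepsilon_1 I_2,\varepsilon_2 I_2)=\varepsilon_1\varepsilon_2 I_4$ via the double cover, together with the eigenvalue analysis of the trace-free Hermitian matrix $D(\mu)$ for $\mu\in\mathbb S^1$, is precisely the content behind that Remark and the explicit closing conditions listed there.
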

\begin{Remark}
\mbox{}
\begin{enumerate}
    \item The  conditions in Proposition \ref{prp:closing}
    are explicitly given by 
    \[
    2 \sqrt{c^{2} + |\lambda_0 a + \lambda_0^{-1}b|^{2}}, \quad 2 \sqrt{ c^{2} + |\lambda_0 a - \lambda_0^{-1}b|^{2}} \in  \mathbb Z_{\geq 0}.           
    \] 
    \item In \cite[Section 8]{SKKR}, the conditions for the well-definedness on $\mathbb C \setminus \{0\}$
    of $\mathbb R$-equivariant constant mean curvature surfaces in 
    $\mathbb S^3$ have been given by that 
    the eigenvalues of $D(\lambda_0)$ and $D(- i \lambda_0)$ are half-integers.
    Thus even the corresponding $\mathbb R$-equivariant minimal surface is not well-defined,
    the minimal Lagrangian surface could be well-defined on $\mathbb C \setminus \{0\}$.
    Such cases exactly happen if the one of eigenvalues $D(\lambda_0)$ and $D(- i \lambda_0)$
    is an integer and the other is a half-integer, and thus
    \[
   \hat{\mathfrak{f}}^{\lambda}\left( e^{2 \pi i} \cdot z , \, e^{-2 \pi i} \cdot \bar{z} , \, \lambda_0 \right)
   =-\hat{\mathfrak{f}}^{\lambda}\left(z , \, \bar{z} , \, \lambda_0 \right)
    \]
    holds for the horizontal lift $\hat{\mathfrak{f}}^{\lambda}$, but it is the same point in $Q_2$.
\end{enumerate}

\end{Remark}
\subsubsection{Radially symmetric minimal Lagrangian surfaces}
\begin{Definition}[Radially symmetric potentials, \cite{FKR2009}]
Define 
\begin{equation}\label{eq: Smyth potential}
    \xi = \lambda^{-1} \begin{pmatrix}
 0 & 1 \\
 cz^{k} & 0
\end{pmatrix} \text{d} z, 
\end{equation}
for $z \in \mathbb{D} = \mathbb{C}$ and $ k \in \mathbb N$ and some $c \in \mathbb{C} \setminus (\mathbb{S}^{1} \cup \{ 0 \} ) $. 
Here we call such potentials the \textit{radially symmetric potentials}.
\end{Definition}
First we recall the result of \cite[Lemma 2.6.1]{FKR2009}:
The constant mean curvature surface in $\mathbb R^3$ (the so-called Smyth surfaces) constructed by $\xi$ 
has reflection symmetries with respect to $k+2$ geodesic planes that meet equiangularly along a geodesic line.

Thus it is natural to think the reflections 
$R_{\ell}(z) = e^{\frac{2\pi i \ell}{k+2}} \bar{z}$
of the domain $\mathbb{C}$, for $\ell \in \{ 0 , 1, \dots, k+1 \}$. Note that
\begin{equation}\label{eq:symxi}
    \xi (R_{\ell}(z), \lambda) = A_{\ell} \xi(\bar{z} , \lambda) A_{\ell}^{-1}
\end{equation}
holds, where
\begin{equation*}
    A_{\ell} = \begin{pmatrix}
 e^{\frac{\pi i \ell}{k+2}} & 0 \\
 0 & e^{\frac{-\pi i \ell}{k+2}}
\end{pmatrix} \in \SU.
\end{equation*}
Let $\Phi$ be the solution of $d \Phi = \Phi \xi$ with $\Phi(z_0) = \id$ and 
consider the Iwasawa decomposition $\Phi = F_{\lambda} B$.
Then by \eqref{eq:symxi}, 
\begin{equation*}
    F(R_{\ell}(z),\overline{R_{\ell}(z)}, \lambda) = A_{\ell} F(\bar{z}, z , \lambda) A_{\ell}^{-1}
\end{equation*}
holds. This leads to the following proposition.
\begin{Proposition}
    Let $\xi$ be the radially symmetric potentials defined in \eqref{eq: Smyth potential} and let $F_{\lambda} \in \Lambda\SU_{\sigma}$ for $\lambda \in \mathbb{S}^{1}$ be an extended frame obtained by $\xi$. Then the minimal Lagrangian surface $f^{\lambda}: M \to Q_{2}$ constructed by $( F_{\lambda}, F_{-i\lambda} )$ 
    has discrete rotational symmetries$:$
    \begin{equation*}
    \hat{\mathfrak f}^{\lambda}( R_{\ell}(z), \overline{R_{\ell}(z)}, \lambda )  = \mathcal{A}_{\ell} \hat{\mathfrak f}^{\lambda}(\bar{z} , z , \lambda), \quad
    \mbox{with} \quad \mathcal{A}_{\ell} :=\begin{pmatrix}
 1 & 0 & 0 & 0 \\
 0 & 1 & 0 & 0 \\
 0 & 0 & \cos \left( \frac{2\pi \ell}{k + 2} \right) & -\sin \left( \frac{2\pi \ell}{k + 2} \right) \\
 0 & 0 & \sin \left( \frac{2\pi \ell}{k + 2} \right) & \cos \left( \frac{2\pi \ell}{k + 2} \right)
\end{pmatrix},
\end{equation*}
 where $\hat{\mathfrak f}^{\lambda}$ is the horizontal lift of $f^{\lambda}$.
 Moreover, the metric of the minimal Lagrangian surface $f^{\lambda}$ depends only on the radial direction, that is $|z|$. 
Such a minimal Lagrangian surface $f^{\lambda}$ is said to be {\rm radially symmetric}.
\end{Proposition}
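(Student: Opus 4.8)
The plan is to deduce both claims from the frame symmetry relation recorded just before the statement, namely $F_{\lambda}(R_{\ell}(z),\overline{R_{\ell}(z)},\lambda)=A_{\ell}\,F_{\lambda}(\bar z,z,\lambda)\,A_{\ell}^{-1}$ (which already uses that $0$ is a common fixed point of all the $R_{\ell}$, so that no dressing term appears), together with the reconstruction formula \eqref{eq:formula} of Proposition \ref{Prop}.

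For the discrete rotational symmetry: since $A_{\ell}$ does not depend on $\lambda$ and $R_{\ell}$ does not involve $\lambda$, the same relation holds with $\lambda$ replaced by $-i\lambda$, giving $F_{-i\lambda}(R_{\ell}(z),\overline{R_{\ell}(z)},\lambda)=A_{\ell}F_{-i\lambda}(\bar z,z,\lambda)A_{\ell}^{-1}$. Substituting these into $X^{\lambda}=F_{\lambda}F_{-i\lambda}^{-1}$ and $Y^{\lambda}=iF_{\lambda}\sigma_{3}F_{-i\lambda}^{-1}$ and using that the diagonal matrix $A_{\ell}$ commutes with $\sigma_{3}$, one obtains $X^{\lambda}(R_{\ell}(z))=A_{\ell}X^{\lambda}(\bar z,z)A_{\ell}^{-1}$ and $Y^{\lambda}(R_{\ell}(z))=A_{\ell}Y^{\lambda}(\bar z,z)A_{\ell}^{-1}$. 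Writing $A_{\ell}=\operatorname{diag}(\mu,\mu^{-1})$ with $\mu=e^{i\pi\ell/(k+2)}$, conjugation by $A_{\ell}$ fixes the $(1,1)$-entry of a matrix and multiplies the $(1,2)$-entry by $\mu^{2}=e^{2\pi i\ell/(k+2)}$. Inserting this into \eqref{eq:formula} and expanding via $\operatorname{R}(e^{i\theta}c)=\cos\theta\,\operatorname{R}(c)-\sin\theta\,\operatorname{I}(c)$ and $\operatorname{I}(e^{i\theta}c)=\sin\theta\,\operatorname{R}(c)+\cos\theta\,\operatorname{I}(c)$ at $\theta=2\pi\ell/(k+2)$, the first two entries of the horizontal lift are left unchanged while the last two undergo the planar rotation by $\theta$; this is exactly $\hat{\mathfrak f}^{\lambda}(R_{\ell}(z),\overline{R_{\ell}(z)},\lambda)=\mathcal{A}_{\ell}\,\hat{\mathfrak f}^{\lambda}(\bar z,z,\lambda)$.

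For the radial dependence of the metric: first I would record the equivariance of the potential under the full rotation group, $\xi(e^{i\phi}z,\lambda)=g_{\phi}\,\xi\!\left(z,\lambda e^{-i(k+2)\phi/2}\right)g_{\phi}^{-1}$ with $g_{\phi}=\operatorname{diag}(e^{-ik\phi/4},e^{ik\phi/4})\in\mathrm{U}(1)$, which is a direct comparison of the off-diagonal entries of $\xi$ (the powers of $e^{i\phi}$ coming from $dz$ and from $z^{k}$ pin down both $g_{\phi}$ and the spectral shift $\lambda\mapsto\lambda e^{-i(k+2)\phi/2}$). Solving $d\Phi=\Phi\xi$ with $\Phi|_{z=0}=\id$ and invoking uniqueness, this propagates to $\Phi(e^{i\phi}z,\lambda)=g_{\phi}\,\Phi\!\left(z,\lambda e^{-i(k+2)\phi/2}\right)g_{\phi}^{-1}$ with no extra factor, since $0$ is fixed by the rotation. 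Because $g_{\phi}$ is a $\lambda$-independent unitary matrix, conjugation by it preserves $\Lambda\SU_{\sigma}$ and the normalization of $\Lambda^{+}\SL_{\sigma}$ (it leaves the $\lambda=0$ diagonal of the positive factor untouched), so Iwasawa uniqueness yields $F_{\lambda}(e^{i\phi}z)=g_{\phi}\,F_{\lambda e^{-i(k+2)\phi/2}}(z)\,g_{\phi}^{-1}$. Now $\hat u$ and $|\hat\alpha|$ are, by Theorem \ref{thm2}, independent of the spectral parameter, and they are manifestly unchanged by the constant unitary conjugation; reading them off from $\check\omega^{\lambda}$ in \eqref{eq: system} and comparing entries therefore gives $\hat u(e^{i\phi}z)=\hat u(z)$ and $|\hat\alpha(e^{i\phi}z)|=|\hat\alpha(z)|$ for every $\phi$. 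Hence in $2e^{u}\,dz\,d\bar z=(e^{\hat u}+|\hat\alpha|^{2}e^{-\hat u})\,dz\,d\bar z$ both summands depend only on $|z|$, which is the assertion.

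The main obstacle I anticipate is the correct handling of the Iwasawa decomposition under the spectral shift in the second part: one must make sure the conjugating constant $g_{\phi}$ introduces no non-unitary dressing and no change of normalization, which is precisely why the base point has to be taken at the fixed point $z=0$ (the analogue in the first part being that $0$ is a fixed point of every $R_{\ell}$). Everything else — the conjugation rules for $\operatorname{diag}(\mu,\mu^{-1})$, the real/imaginary recombination producing $\mathcal{A}_{\ell}$, and the entrywise comparison of Maurer--Cartan forms — is routine bookkeeping.
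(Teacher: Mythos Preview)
Your argument for the discrete rotational symmetry is correct and essentially equivalent to the paper's: the paper transfers the conjugation relation through the loop group homomorphism $\psi$ to obtain $\hat{\mathcal F}(R_{\ell}(z))=\psi(A_{\ell},A_{\ell})\,\hat{\mathcal F}(\bar z,z)\,\psi(A_{\ell},A_{\ell})^{-1}$, computes $\psi(A_{\ell},A_{\ell})=\mathcal A_{\ell}$ explicitly, and then reads off the first two columns, while you work directly with $X^{\lambda}$, $Y^{\lambda}$ and the reconstruction formula \eqref{eq:formula}. Both are two sides of the same bookkeeping through the isomorphism of Appendix~\ref{app: lie group lie algebra}.

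For the radial dependence of the metric your route is genuinely different. The paper does not argue directly: it simply cites \cite{BI1995} for the fact that $\hat u$ depends only on $|z|$ for Smyth-type potentials, and then invokes \eqref{eq: metric}. Your argument, establishing a continuous rotational equivariance of $\xi$ with spectral shift $\lambda\mapsto\lambda e^{-i(k+2)\phi/2}$ and propagating it through Iwasawa, is self-contained and more informative; the gauge $g_{\phi}$ and the shift you found are exactly right (the phase bookkeeping in the $(1,2)$- and $(2,1)$-entries of $\check U^{\lambda}$ closes up to give $\hat u(e^{i\phi}z)=\hat u(z)$ and $|Q(e^{i\phi}z)|=|Q(z)|$). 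The only delicate point is the one you already flagged, the residual $\mathrm U(1)$ ambiguity in Iwasawa; even without a stated normalization this is harmless here, since $e^{\hat u/2}$ and $|Q|e^{-\hat u/2}$ are the moduli of the off-diagonal entries of $\check U^{\lambda}$ and hence invariant under diagonal gauge.
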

\begin{proof}
    Let $\psi$ be a loop group homomorphism 
    $\psi : \Lambda \SU_{\sigma} \times \Lambda \SU_{\sigma} \to \Lambda \SO_{\sigma}$ as before, and 
    set $\hat{\mathcal F}(z, \bar z, \lambda) = \psi (F(z, \bar z, \lambda), F(z, \bar z, - i \lambda))$.
    By direct computation, we have
\begin{equation*}
  \begin{aligned}
      \hat{\mathcal F}( R_{\ell}(z), \overline{R_{\ell}(z)}, \lambda ) &= \psi  \left( F(R_{\ell}(z) , \overline{R_{\ell}(z)} , \lambda ) , F( R_{\ell}(z) , \overline{R_{\ell}(z)} , -i\lambda ) \right)  \\
      &= \psi \left( \left( A_{\ell} , A_{\ell} \right) \cdot \left( F(\bar{z}, z , \lambda), F(\bar{z}, z, -i\lambda) \right) \cdot \left( A_{\ell} , A_{\ell} \right)^{-1}   \right) \\
      &= \psi \left(  A_{\ell} , A_{\ell}  \right) \cdot \hat{\mathcal F}(\bar{z} , z , \lambda) \cdot  \left( \psi  \left( A_{\ell} , A_{\ell} \right) \right)^{-1},
  \end{aligned}
\end{equation*}
where 
\begin{equation*}
    \psi\left( A_{\ell}, A_{\ell} \right) = \begin{pmatrix}
 1 & 0 & 0 & 0 \\
 0 & 1 & 0 & 0 \\
 0 & 0 & \cos \left( \frac{2\pi \ell}{k + 2} \right) & -\sin \left( \frac{2\pi \ell}{k + 2} \right) \\
 0 & 0 & \sin \left( \frac{2\pi \ell}{k + 2} \right) & \cos \left( \frac{2\pi \ell}{k + 2} \right)
\end{pmatrix}.
\end{equation*}
 Then a straightforward computation shows the first claim. 
 
 It is shown in \cite{BI1995} that the real function $\hat{u}(z,\bar{z})$ in the metric $2e^{\hat{u}}\mathrm{d} z \mathrm{d} \bar{z}$ of the surface from the frame $F_{\lambda}$ is independent of $\arg (z)$.  By \eqref{eq: metric}, the real function $u(z, \bar{z})$ in the metric $2e^{u}\mathrm{d} z \mathrm{d} \bar{z}$ of the minimal Lagrangian surface $f^{\lambda}$ only depends on $|z|$.
This completes the proof.
\end{proof}
 Note that when $c \in \mathbb{C} \setminus (\mathbb{S}^{1} \cup \{ 0 \} )$, there is no simple way to perform 
the Iwasawa decomposition of this solution $\phi$, so we cannot write the explicit form of all radially symmetric surfaces. 

\subsection{Minimal Lagrangian trinoids}
Finally, we construct minimal Lagrangian surfaces from thrice-punctured sphere into $Q_2$
according to \cite{SKKR}. We first discuss the \textit{monodromy problem} of the DPW method:
For a given holomorphic potential $\xi$, there exists a unique solution $\Phi$  in 
\textbf{Step 1} in Section \ref{sbsc:DPW}. Note that even $\xi$ is well-defined on a 
Riemann surface $M$, the solution $\Phi$ may not be well-defined on $M$, that is, $\Phi$
is well-defined on the universal cover of $M$ (see Section \ref{subsub:equiv} for an example). To simplify the exposition, let us take 
$M = \mathbb {C}P^1 \setminus \{0, 1, \infty\}$, and set simple loops starting from the base point 
$z_0 \neq 0, 1, \infty$ going around $0, 1$ and $\infty$ ending to $z_0$
by $\gamma_j (j=0, 1, \infty)$.  Note that it has a simple relation 
 $\gamma_{\infty} \gamma_1 \gamma_0 = \id$,
and $\gamma_i$ and $\gamma_j (i \neq j)$  are the generators of $\pi_1 (M)$. The fundamental 
relations are given as follows:
\[
 \gamma_j^*  \Phi = \mathcal H (\gamma_j) \Phi, \quad \mathcal H (\gamma_j) \in \Lambda \SL_{\sigma},
\]
and from the relation of $\gamma_{\infty} \gamma_1 \gamma_0 = \id$,  $\mathcal H (\gamma_{\infty})\mathcal H (\gamma_1)\mathcal H (\gamma_0) = \id$ follows. The matrices 
$\mathcal H (\gamma_j)$ are called the \textit{monodromy matrices}. In \textbf{Step 2}, we 
have considered the 
Iwasawa decomposition to obtain the extended frame $F_{\lambda}$. In general,
the behavior of $F_{\lambda}$ under $\gamma_j$ is not obvious, since the monodromy matrix
$\mathcal H (\gamma_j)$ does not take values in $\Lambda \SU_{\sigma}$. Moreover, even though 
$\mathcal H (\gamma_j)$ takes values
$\Lambda \SU_{\sigma}$, it is not clear that the resulting minimal Lagrangian immersion $f^{\lambda}$
in \textbf{Step 4} is well-defined on $M$. We now state the following sufficient condition.
\begin{Proposition}\label{prp:closing2}
 The resulting minimal Lagrangian immersion $f^{\lambda}|_{\lambda = \lambda_0}$ is 
 well-defined on $M = \mathbb {C}P^1 \setminus \{0, 1, \infty\}$ if the monodromy matrices $\mathcal H (\gamma_j) (j=0, 1, \infty)$  satisfy the following two conditions:
 \begin{enumerate}
     \item\label{item1} $\mathcal H (\gamma_j)$ takes values in $\Lambda \SU_{\sigma}$  for $j=0, 1, \infty$.
     \item\label{item2} $\mathcal H (\gamma_j)|_{\lambda = \lambda_0} = \mathcal H (\gamma_j)|_{\lambda = -i \lambda_0}= \pm \id$.
 \end{enumerate}
\end{Proposition}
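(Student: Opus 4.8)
The plan is to trace the monodromy of the extended frame through the DPW construction of Section~\ref{sbsc:DPW} and then specialize at $\lambda=\lambda_0$. The first ingredient is condition~\eqref{item1}. Writing $\Phi=F_\lambda B$ for the Iwasawa decomposition used in \textbf{Step 2}, the fundamental relation $\gamma_j^*\Phi=\mathcal H(\gamma_j)\Phi=\bigl(\mathcal H(\gamma_j)F_\lambda\bigr)B$ displays a second Iwasawa splitting of $\gamma_j^*\Phi$, now with $\mathcal H(\gamma_j)F_\lambda\in\Lambda\SU_\sigma$ (a product of two elements of $\Lambda\SU_\sigma$, by \eqref{item1}) and $B\in\Lambda^+\SL_\sigma$. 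Uniqueness of the Iwasawa decomposition then forces
\[
 \gamma_j^*F_\lambda=\mathcal H(\gamma_j)\,F_\lambda,\qquad \gamma_j^*B=B,
\]
and substituting $\lambda\mapsto-i\lambda$ (legitimate, since $-i\lambda\in\mathbb S^1$ whenever $\lambda\in\mathbb S^1$, and $F_{-i\lambda}$, $\mathcal H(\gamma_j)(-i\lambda)$ are the corresponding evaluations) gives $\gamma_j^*F_{-i\lambda}=\mathcal H(\gamma_j)(-i\lambda)\,F_{-i\lambda}$.

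Next I would transport this through the loop group homomorphism $\psi:\Lambda\SU_\sigma\times\Lambda\SU_\sigma\to\Lambda\SO_\sigma$ of \eqref{eq:loopisom}. With $X^\lambda=F_\lambda F_{-i\lambda}^{-1}$ and $Y^\lambda=iF_\lambda\sigma_{3}F_{-i\lambda}^{-1}$ as in Proposition~\ref{Prop}, the two displayed relations give
\[
 \gamma_j^*X^\lambda=\mathcal H(\gamma_j)(\lambda)\,X^\lambda\,\mathcal H(\gamma_j)(-i\lambda)^{-1},\qquad
 \gamma_j^*Y^\lambda=\mathcal H(\gamma_j)(\lambda)\,Y^\lambda\,\mathcal H(\gamma_j)(-i\lambda)^{-1},
\]
and, since conjugating the pair $(X^\lambda,Y^\lambda)$ by $\bigl(\mathcal H(\gamma_j)(\lambda),\mathcal H(\gamma_j)(-i\lambda)\bigr)$ corresponds, under $\psi$, to left multiplication of the $\mathbb{C}^4$-valued representative $\hat{\mathfrak f}^\lambda$ appearing in \eqref{eq:formula} by
\[
 \mathcal M(\gamma_j)(\lambda):=\psi\bigl(\mathcal H(\gamma_j)(\lambda),\,\mathcal H(\gamma_j)(-i\lambda)\bigr)\in\Lambda\SO_\sigma,
\]
we obtain $\gamma_j^*\hat{\mathfrak f}^\lambda=\mathcal M(\gamma_j)(\lambda)\,\hat{\mathfrak f}^\lambda$, hence $\gamma_j^*f^\lambda=[\mathcal M(\gamma_j)(\lambda)\,\hat{\mathfrak f}^\lambda]$. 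It now suffices to show that $\mathcal M(\gamma_j)(\lambda_0)$ fixes the point $[\hat{\mathfrak f}^{\lambda_0}]\in Q_2$.

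This is where condition~\eqref{item2} is used. At $\lambda=\lambda_0$ we have $\mathcal H(\gamma_j)(\lambda_0)=\pm\id$ and $\mathcal H(\gamma_j)(-i\lambda_0)=\pm\id$, with independent signs. A surjective homomorphism maps the center into the center, so $\psi\bigl(\{\pm\id\}\times\{\pm\id\}\bigr)\subseteq Z(\SO)=\{\pm\id_4\}$; more precisely, since $\ker\psi=\boldsymbol Z_2=\{(\id,\id),(-\id,-\id)\}$, under the model $\mathbb R^4\cong\mathbb{H}$ with $(q_1,q_2)\cdot x=q_1x\bar q_2$ one has $\psi(\id,\id)=\psi(-\id,-\id)=\id_4$ and $\psi(-\id,\id)=\psi(\id,-\id)=-\id_4$. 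Hence $\mathcal M(\gamma_j)(\lambda_0)=\pm\id_4$ for every $j$, so
\[
 \gamma_j^*f^{\lambda_0}=[\pm\hat{\mathfrak f}^{\lambda_0}]=[\hat{\mathfrak f}^{\lambda_0}]=f^{\lambda_0}\quad\text{in }Q_2\subset\mathbb{C}P^3.
\]
Since $\gamma_0,\gamma_1,\gamma_\infty$ generate $\pi_1(M)$ (the relation $\gamma_\infty\gamma_1\gamma_0=\id$ being automatically respected, as $\mathcal H(\gamma_\infty)\mathcal H(\gamma_1)\mathcal H(\gamma_0)=\id$ and $\psi$ is a homomorphism), $f^{\lambda_0}$ is invariant under the whole deck group and descends to a well-defined map on $M=\mathbb{C}P^1\setminus\{0,1,\infty\}$; immersivity, minimality and the Lagrangian property are local and already established, so they persist after the descent.

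I expect the delicate point to be the Iwasawa step of the first paragraph: condition~\eqref{item1} is exactly what promotes the monodromy of $F_\lambda$ from an a priori merely $\Lambda\SL_\sigma$-valued cocycle to a genuinely $\Lambda\SU_\sigma$-valued one, which is what allows it to pass through $\psi$ and act on $Q_2$ as an isometry; without it there is no usable transformation law for $F_\lambda$ under $\gamma_j$. Condition~\eqref{item1} by itself would only make $f^{\lambda_0}$ \emph{equivariant} under a (possibly nontrivial) group of isometries of $Q_2$, in the spirit of the $\mathbb R$-equivariant examples above; the extra normalization~\eqref{item2}, which collapses $\mathcal M(\gamma_j)(\lambda_0)$ to $\pm\id_4$, is what upgrades equivariance to honest single-valuedness. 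The only other step needing care is the explicit evaluation of $\psi$ on the four central elements; everything else is a routine diagram chase.
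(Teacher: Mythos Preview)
Your proof is correct and follows essentially the same approach as the paper's: deduce $\gamma_j^*F_\lambda=\mathcal H(\gamma_j)F_\lambda$ from condition~\eqref{item1} via Iwasawa, then use the formula for $f^\lambda$ together with condition~\eqref{item2} to conclude invariance at $\lambda_0$. The paper compresses both steps into two sentences (``it is easy to see'' and ``clearly shows''), whereas you spell out the transformation of $X^\lambda,Y^\lambda$, the passage through $\psi$, and the explicit evaluation $\psi(\pm\id,\pm\id)=\pm\id_4$---details the paper leaves implicit but which are exactly what makes the argument go through.
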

\begin{proof}
    From the assumption \eqref{item1}, it is easy to see 
    \[
    \gamma_j^* F_{\lambda} =  \mathcal H (\gamma_j) F_{\lambda}
    \]
    holds.
   Then the resulting minimal Lagrangian immersion $f^{\lambda}|_{\lambda = \lambda_0}$ 
   is given by \eqref{eq:Xlambda} and \eqref{eq:formula}, and the assumption \eqref{item2} 
   clearly shows that 
  \[
  \gamma_j^* f^{\lambda}|_{\lambda= \lambda_0} = f^{\lambda}|_{\lambda= \lambda_0}
  \]
  holds. This completes the proof.
\end{proof}
\begin{Remark}
 From the relation $\mathcal H (\gamma_{\infty})\mathcal H (\gamma_1)\mathcal H (\gamma_0) =\id$, the assumptions of Proposition \ref{prp:closing2}
 only need to satisfy two matrices of $\{\mathcal H (\gamma_0), \mathcal H (\gamma_1), 
 \mathcal H (\gamma_{\infty})\}$.
\end{Remark}

Now we give the definition of trinoid potential, which can be used to construct constant mean curvature 
trinoid in $\mathbb{S}^{3}$.  In the following, we use the \textit{untwisted} loop group 
according to \cite{SKKR}, however, it makes no essential difference.
Indeed, there is an isomorphism between them:
For an untwisted element $g (\lambda) \in \Lambda \SU$, define a map
\[
 g(\lambda)  \mapsto \operatorname{Ad}\operatorname{diag} (\sqrt{\lambda}, 
 \sqrt{\lambda}^{-1}) g(\lambda^2),
\]
 which gives a twisted element $\Lambda \SU_{\sigma}$
 and it is an isomorphism between the loop groups.
\begin{Definition}[Minimal trinoid potentials, \cite{SKKR}]
    Let $M = \mathbb{C}P^{1}\setminus \left\{ 0 , 1, \infty \right\}$ be the thrice-punctured Riemann sphere. Let $\lambda_{0} \in \{ \pm \sqrt{-1}\}$, and let 
    \begin{equation*}
        h(\lambda) = \lambda^{-1}(\lambda - \lambda_{0})(\lambda - \lambda_{0}^{-1}).
    \end{equation*}
    Let $v_{0}, v_{1}, v_{\infty} \in \mathbb{R}\setminus \{ 0 \}$. The family of trinoid potentials $\xi$, parametrized by $\lambda_{0}$ and $v_{0}, v_{1}, v_{\infty}$, is defined by 
    \begin{equation*}
        \xi = \begin{pmatrix}
 0 & \lambda^{-1}\mathrm{d}z \\
 \lambda h(\lambda) Q/ \mathrm{d}z & 0
\end{pmatrix}, \quad Q = \frac{v_{\infty}z^{2} + (v_{1} - v_{0} - v_{\infty})z + v_{0}}{16z^{2}(z - 1)^{2}} \mathrm{d}z^{2}.
    \end{equation*}
    Here $Q$ will be the Hopf differential of the resulting minimal trinoid, up to a $z$-independent multiplicative constant. Its three poles are the ends of the minimal trinoid, and its two zeros the umbilic points.
\end{Definition} 
\begin{Remark}
 The choice of $\lambda_0$ determines the value of constant mean curvature of the resulting 
 trinoids in $\mathbb S^3$, and we are only interested in minimal trinoids; thus, we choose 
 $\lambda_0 \in \{ \pm \sqrt{-1}\}$.
\end{Remark}

\begin{Theorem}[Theorem 5 in \cite{SKKR}]\label{Thm trinoid S3}
    Let $\mathcal H (\gamma_j) ( j=0,1, \infty)$ be monodromy matrices of a trinoid potential, and assume that $\mathcal H (\gamma_j)$ is holomorphic on $\mathbb{C}^{\ast}$. Let the parameters in the potential be $\lambda_{0}, v_{0}, v_{1}, v_{\infty}$, and for $k \in \{ 0 , 1, \infty \}$ suppose that $1 + v_{k}h(-1)/4 \ge 0 $ and $1 + v_{k}h(1)/4 \ge 0$. Define
    \begin{equation*}
        n_{k} = \frac{1}{2}- \frac{1}{2} \sqrt{1 + v_{k}h(-1)/4}, \quad m_{k} = \frac{1}{2} - \frac{1}{2} \sqrt{1 + v_{k}h(1) /4}.
    \end{equation*}
    Suppose the following inequalities hold for every permutation $(i , j , k)$ of $(0 , 1 , \infty)$: 
    \begin{equation*}
        \begin{aligned}
            &|n_{0}| + |n_{1}| + |n_{\infty}| \le 1 \quad \text{and} \quad |n_{i}| \le |n_{j}| + |n_{k}|, \\
            &|m_{0}| + |m_{1}| + |m_{\infty}| \le 1 \quad \text{and} \quad |m_{i}| \le |m_{j}| + |m_{k}|.
        \end{aligned}
    \end{equation*}
    Then such a trinoid potential satisfying the above conditions generates via the DPW method
    a conformal minimal immersion of the thrice-punctured Riemann sphere into $\mathbb{S}^{3}$.
\end{Theorem}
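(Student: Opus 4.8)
The plan is to run the DPW recipe of Section~\ref{sbsc:DPW} on the trinoid potential $\xi$ and to show that, under the stated inequalities on $n_k$ and $m_k$, every step descends to $M=\mathbb{C}P^{1}\setminus\{0,1,\infty\}$. Since a trinoid is non-compact with three ends, the only thing to verify is that the extended frame, and hence the Sym--Bobenko immersion into $\mathbb{S}^{3}$, is single-valued on $M$; arguing as in Proposition~\ref{prp:closing2} this comes down to two conditions on the monodromy matrices $\mathcal{H}(\gamma_{j})$: simultaneous unitarizability for all $\lambda\in\mathbb{S}^{1}$, and triviality ($\pm\mathrm{id}$) at the two sym points. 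For minimal surfaces in $\mathbb{S}^{3}$ the sym points are $\lambda=\pm\sqrt{-1}$, which are exactly the zeros of $h(\lambda)=\lambda^{-1}(\lambda-\lambda_{0})(\lambda-\lambda_{0}^{-1})$; there $\xi$ loses its lower-left pole and is holomorphic at each puncture (indeed $\oint\lambda^{-1}\mathrm{d}z$ around a puncture vanishes), so $\mathcal{H}(\gamma_{j})|_{\lambda=\pm\sqrt{-1}}=\mathrm{id}$ automatically. Thus the whole statement reduces to the unitarizability of the monodromy representation, and the inequalities on $n_{k},m_{k}$ are precisely the solvability conditions for that problem.

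First I would carry out the local analysis at each end. Near $z=p_{k}$ (with $p_{0}=0$, $p_{1}=1$, $p_{\infty}=\infty$) the lower-left entry $\lambda h(\lambda)Q/\mathrm{d}z$ has a double pole; a $z$-dependent gauge transformation turns $\xi$ into a Fuchsian connection whose residue has eigenvalues $\pm\tfrac12\sqrt{1+v_{k}h(\lambda)/4}$ up to a harmless normalization. The hypothesis that $\mathcal{H}(\gamma_{j})$ is holomorphic on $\mathbb{C}^{\ast}$ excludes resonances and logarithmic terms, so $\mathcal{H}(\gamma_{k})$ is conjugate to $\operatorname{diag}\big(e^{\pi i\mu_{k}(\lambda)},e^{-\pi i\mu_{k}(\lambda)}\big)$ with $\mu_{k}(\lambda)=\sqrt{1+v_{k}h(\lambda)/4}$, which is real on $\mathbb{S}^{1}$ by the assumed nonnegativity of $1+v_{k}h(\pm1)/4$ (linearity in $h$ forces $1+v_{k}h(\lambda)/4\ge0$ throughout). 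Writing $\lambda=e^{i\theta}$ one has $h(\lambda)=\lambda+\lambda^{-1}=2\cos\theta\in[-2,2]$, so $\mu_{k}$ is a monotone function of $\cos\theta$ with extreme values at $\lambda=\pm1$, namely $1-2m_{k}$ at $\lambda=1$ and $1-2n_{k}$ at $\lambda=-1$ in the normalization $\nu_{k}(\lambda)=\tfrac12(1-\mu_{k}(\lambda))$, $n_{k}=\nu_{k}(-1)$, $m_{k}=\nu_{k}(1)$.

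The core of the argument is the unitarization step. I would invoke the unitarization criterion for thrice-punctured-sphere representations (the main technical input of \cite{SKKR}): a representation of $\pi_{1}(M)$ into $\Lambda\mathrm{SL}(2,\mathbb{C})_{\sigma}$ with $\mathcal{H}(\gamma_{\infty})\mathcal{H}(\gamma_{1})\mathcal{H}(\gamma_{0})=\mathrm{id}$ and prescribed elliptic eigenvalues is conjugate, by a \emph{holomorphic} loop in $\Lambda\mathrm{SL}(2,\mathbb{C})_{\sigma}$, into $\Lambda\mathrm{SU}(2)_{\sigma}$ if and only if, for every $\lambda\in\mathbb{S}^{1}$, the three eigenvalue angles are the angles of a (possibly degenerate) spherical triangle, i.e.\ $\sum_{k}|\nu_{k}(\lambda)|\le1$ and $|\nu_{i}(\lambda)|\le|\nu_{j}(\lambda)|+|\nu_{k}(\lambda)|$ for every permutation $(i,j,k)$. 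Because $\mu_{k}$, hence $\nu_{k}$, is monotone in $\cos\theta$, the region of $\lambda$ where all these inequalities hold is controlled by the endpoints $\lambda=\pm1$, so the family of constraints over $\mathbb{S}^{1}$ collapses exactly to the two finite systems of inequalities on $n_{k}$ and $m_{k}$. Conjugating $\Phi$ by the resulting $C(\lambda)\in\Lambda\mathrm{SL}(2,\mathbb{C})_{\sigma}$ and Iwasawa-decomposing produces an extended frame $F_{\lambda}$ with monodromy in $\Lambda\mathrm{SU}(2)_{\sigma}$ that is trivial at $\lambda=\pm\sqrt{-1}$; the Sym--Bobenko formula \eqref{eq: fmin N} then yields a conformal minimal immersion $f_{min}:M\to\mathbb{S}^{3}$, with Hopf differential a constant multiple of $Q$, whose poles are the three ends and whose zeros are the umbilic points.

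The hard part will be the unitarization step, for two reasons. First, one must pass from \emph{pointwise} unitarizability (a statement about $\mathrm{SU}(2)$ triples at each fixed $\lambda$, solved by spherical trigonometry) to a \emph{single} conjugator depending holomorphically on $\lambda$ across $\mathbb{S}^{1}$; this is where the holomorphicity of $\mathcal{H}(\gamma_{j})$ on $\mathbb{C}^{\ast}$, the reality structure, and a Birkhoff-factorization argument are indispensable, and it is the genuinely loop-group-theoretic content. Second, the reduction ``it suffices to check $\lambda=\pm1$'' requires a convexity property of the locus of admissible eigenvalue data together with the monotonicity of $\mu_{k}(\lambda)$; making this rigorous — rather than the routine local computations at the ends or the final Sym--Bobenko bookkeeping — is where the real work lies.
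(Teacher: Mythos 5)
This statement is quoted verbatim from \cite{SKKR} (it is labelled ``Theorem 5 in \cite{SKKR}'') and the present paper offers no proof of it at all --- it is an imported input, used only through Proposition~\ref{prp:closing2} to conclude that the resulting trinoids descend to $Q_2$. So there is no in-paper argument to compare yours against; what can be said is that your outline is a faithful reconstruction of how \cite{SKKR} actually proves it: Fuchsian analysis at the three regular singular points giving monodromy eigenvalues $e^{\pm\pi i\sqrt{1+v_k h(\lambda)/4}}$, the spherical-triangle-inequality criterion for pointwise unitarizability of a triple in $\mathrm{SU}(2)$ with product the identity, reduction of the $\lambda$-family of inequalities to the endpoints $\lambda=\pm1$ via the linearity of $1+v_kh(\lambda)/4$ in $\cos\theta$, a global ($\lambda$-holomorphic) unitarizer, and finally the Sym--Bobenko evaluation at the zeros $\lambda=\pm\sqrt{-1}$ of $h$, where you correctly observe that the monodromy is automatically trivial because the potential becomes holomorphic there (this is exactly the hypothesis of Proposition~\ref{prp:closing2}, modulo the twisted/untwisted relabelling $\lambda\mapsto\lambda^2$ that the paper mentions at the start of the trinoid subsection).

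As a proof, however, the proposal is not self-contained, and you say so yourself. The two steps you flag as ``where the real work lies'' are precisely the content of the theorem being proved: (i) the passage from pointwise unitarizability at each $\lambda\in\mathbb{S}^1$ to a single unitarizing loop depending holomorphically on $\lambda$ (the $r$-dressing/simultaneous unitarization theorem of \cite{SKKR}, which needs irreducibility of the representation away from finitely many $\lambda$ and a Birkhoff-type argument --- invoking it as ``the main technical input of \cite{SKKR}'' is circular if the goal is to prove Theorem 5 of that paper); and (ii) the claim that the triangle inequalities for $\nu_k(\lambda)=\tfrac12\bigl(1-\sqrt{1+v_kh(\lambda)/4}\bigr)$ over all of $\mathbb{S}^1$ collapse to the two systems at $\lambda=\pm1$, which requires an actual convexity/monotonicity argument for the admissible region in the eigenvalue data, not just monotonicity of each $\nu_k$ separately (the inequalities involve absolute values and differences, so the admissible set in $t=h(\lambda)/4$ must be shown to be an interval). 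Everything you do supply --- the local exponents, the reality of the eigenvalue angles under $1+v_kh(\pm1)/4\ge0$, the identification of the sym points with the zeros of $h$ --- is correct, but the proposal as written is a roadmap to the proof in \cite{SKKR} rather than a proof.
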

 Applying Proposition \ref{prp:closing2}, we obtain the following corollary.
\begin{Corollary}
 The trinoid potentials $\xi$ with the assumption in {\rm Theorem \ref{Thm trinoid S3}}
 yield minimal Lagrangian trinoids in $Q_2$.
\end{Corollary}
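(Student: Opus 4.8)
The plan is to run the DPW recipe of Section~\ref{sbsc:DPW} on a trinoid potential and to transfer the closing analysis from the three-sphere by means of Theorem~\ref{Thm corresponding}. First I would translate $\xi$ from the untwisted loop group into $\Lambda\SL_{\sigma}$ via the isomorphism $g(\lambda)\mapsto\operatorname{Ad}\operatorname{diag}(\sqrt{\lambda},\sqrt{\lambda}^{-1})g(\lambda^{2})$ recalled before the definition of trinoid potentials, solve $d\Phi=\Phi\xi$ on the universal cover of $M=\mathbb{C}P^{1}\setminus\{0,1,\infty\}$, perform the Iwasawa decomposition $\Phi=F_{\lambda}B$, form the pair $(F_{\lambda},F_{-i\lambda})$, and, via Proposition~\ref{prp:extendcorr} and \eqref{eq:formula}, obtain for each $\lambda\in\mathbb{S}^{1}$ a map $f^{\lambda}$ into $Q_{2}$. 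By Corollary~\ref{coro:formula} this map is minimal and Lagrangian wherever its induced metric is nondegenerate, and by Theorem~\ref{thm2} that metric, $(e^{\hat u}+|\hat\alpha|^{2}e^{-\hat u})\,\mathrm{d}z\mathrm{d}\bar z$, is $\lambda$-independent; so only two facts remain to be established: that $f^{\lambda}$ descends from the universal cover to $M$, and that its metric is positive on all of $M$, including at the zeros of $\hat\alpha$.

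For the metric positivity I would use Theorem~\ref{Thm trinoid S3}: under the stated inequalities on $n_{k},m_{k}$ the potential generates a well-defined conformal minimal \emph{immersion} $f_{min}:M\to\mathbb{S}^{3}$ with Hopf differential a nonzero constant multiple of $Q$, so the conformal factor $e^{\hat u}$ of $f_{min}$ is strictly positive on $M$. By the forward implication of Theorem~\ref{Thm corresponding} (equivalently Corollary~\ref{Cor fmin N}), the metric of $f^{\lambda}$ equals $2e^{u}\,\mathrm{d}z\mathrm{d}\bar z=(e^{\hat u}+|\hat\alpha|^{2}e^{-\hat u})\,\mathrm{d}z\mathrm{d}\bar z$ with $2e^{\hat u}\,\mathrm{d}z\mathrm{d}\bar z$ the metric of $f_{min}$; hence $2e^{u}\ge e^{\hat u}>0$ everywhere on $M$, so $f^{\lambda}$ is an immersion on all of $M$ even at the two umbilic points of $f_{min}$, where $Q$ and thus $\hat\alpha$ vanish. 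This is exactly where dropping the hypothesis $\hat\alpha\neq0$ in Theorem~\ref{Thm corresponding} is essential, since there the auxiliary frame and the formulas would otherwise degenerate at umbilics.

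It remains to show that $f^{\lambda}$ descends to $M$; here I would invoke Proposition~\ref{prp:closing2}. Along the loops $\gamma_{j}$ around the punctures, $F_{\lambda}$ acquires the monodromy matrices $\mathcal H(\gamma_{j})$ of $\Phi$, and the proof of Theorem~\ref{Thm trinoid S3} in \cite{SKKR} produces, under the stated inequalities, simultaneous unitarizers for the $\mathcal H(\gamma_{j})$ (condition \eqref{item1}) together with the triviality up to sign of these monodromies at the spectral parameters entering the Sym--Bobenko formula for minimal surfaces in $\mathbb{S}^{3}$, which in the normalization \eqref{eq: fmin N} are $\lambda_{0}$ and $-i\lambda_{0}$ (condition \eqref{item2}); the residual sign is immaterial in $Q_{2}$ because $f^{\lambda}=[\hat{\mathfrak f}^{\lambda}]$ is a projective class, exactly as in the Remark following Proposition~\ref{prp:closing}. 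Granting this, Proposition~\ref{prp:closing2} makes $f^{\lambda}|_{\lambda=\lambda_{0}}$ well-defined on $M$, completing the proof that the trinoid potentials yield minimal Lagrangian trinoids in $Q_{2}$. The step I expect to be the main obstacle is purely a matter of bookkeeping: matching the Sym-point conventions of \cite{SKKR} (stated there in the untwisted picture and phrased through $h(\pm1)$) with the pair $(\lambda_{0},-i\lambda_{0})$ of Proposition~\ref{prp:closing2} and with the $\lambda\mapsto-i\lambda$ doubling built into \eqref{eq:hatu}--\eqref{eq:hatv} and the loop group isomorphism \eqref{eq:loopisom}; once that dictionary is pinned down, verifying \eqref{item1} and \eqref{item2} is routine.
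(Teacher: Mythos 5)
Your proposal is correct and follows essentially the same route as the paper, which simply applies Proposition~\ref{prp:closing2} with conditions \eqref{item1} and \eqref{item2} supplied by the unitarized monodromies of Theorem~\ref{Thm trinoid S3} (the paper gives no further written argument). Your additional observations --- that the metric $\bigl(e^{\hat u}+|\hat\alpha|^{2}e^{-\hat u}\bigr)\,\mathrm{d}z\mathrm{d}\bar z$ stays positive at the umbilics of $f_{min}$ and that the twisted/untwisted Sym-point dictionary must be matched --- are sensible elaborations of exactly the points the paper leaves implicit.
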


\appendix

\section{Comparison to the result of Castro-Urbano}\label{app: CU}

In this section, we shall discuss a relation to the result of Castro-Urbano \cite{CU} and establish a correspondence with their quantities. The following notations and definitions are all used in 
\cite{CU}.

Let $\mathbb{S}^{2}$ be the unit sphere in the Euclidean space $\mathbb{R}^{3}$ endowed with its standard Euclidean metric $\left\langle, \right\rangle$ and its structure of Riemann surface given by $J_{x}v = x \times v$. for any $v \in T_{x}\mathbb{S}^{2}$, $x \in \mathbb{S}^{2}$, where $\times$ stands for the vectorial product in $\mathbb{R}^{3}$.  Its K\"ahler 2-form is the area 2-form $\omega_{0}$ defined by 
\begin{equation*}
    \omega_{0} \left( v , w \right) = \langle J_{x}v, w \rangle = \det \{ x, v , w  \}
\end{equation*}
for any $v, w \in T_{x}\mathbb{S}^{2}$. We endow $\mathbb{S}^{2} \times \mathbb{S}^{2}$ with the product metric (also denoted by $\left\langle, \right\rangle$) and the product complex structure $J$ given by
\begin{equation*}
    J_{(x, y)}(v) = \left( J_{x}v_{1} , J_{y}v_{2} \right) = \left( x \times v_{1}, y \times v_{2} \right), 
\end{equation*}
for any $v = \left( v_{1} , v_{2} \right) \in T_{(x , y)}(\mathbb{S}^{2} \times \mathbb{S}^{2})$, $(x , y) \in \mathbb{S}^{2} \times \mathbb{S}^{2}$, which becomes $\mathbb{S}^{2} \times \mathbb{S}^{2}$ in a K\"ahler surface. Its K\"ahler 2-form is $\omega = \pi^{\ast}_{1}\omega_{0} + \pi^{\ast}_{2}\omega_{0}$, where $\pi_{i}$, $i = 1,2$, are the projections of $\mathbb{S}^{2} \times \mathbb{S}^{2}$ onto $\mathbb{S}^{2}$.

Let $\Phi = \left( \phi, \psi \right) : \Sigma \to \mathbb{S}^{2} \times \mathbb{S}^{2}$ be a Lagrangian immersion of an oriented surface with
area 2-form $\omega_{\Sigma}$. Define the Jacobians of $\phi$ and $\psi$ by
\begin{equation*}
    \phi^{\ast}\omega_{0} = \mathrm{Jac} (\phi) \omega_{\Sigma}, \quad \psi^{\ast}\omega_{0} = \mathrm{Jac} (\psi) \omega_{\Sigma}, \quad
    \mathrm{Jac} (\phi) = -\mathrm{Jac} (\psi).
\end{equation*}
We will call the function 
\begin{equation*}
    C : =  \mathrm{Jac} (\phi) = -\mathrm{Jac} (\psi)
\end{equation*}
the \textit{associated Jacobian} of the oriented Lagrangian surface $\Sigma$.

We consider a local isothermal parameter $z = x + iy$ on $\Sigma$. 
To begin with, we assume the metric 
\begin{equation}\label{eq:metricofS2}
ds^{2}_{\Sigma} = 8e^{u}\text{d}z\text{d}\bar{z},
\end{equation} 
and the formulas for Lagrangian immersion $\Phi$ in $\mathbb{S}^{2} \times \mathbb{S}^{2}$ transform into 
\begin{equation*}
\begin{aligned}
    &|\Phi_{z}|^{2} = 2|\phi_{z}|^{2} = 2|\psi_{z}|^{2} = 4e^{u}, \quad \Phi_{z\bar{z}} = 4e^{u}\left( H - \frac{1}{2}\Phi \right), \\
    &\Phi_{zz} = u_{z}\Phi_{z} + \langle H , J\Phi_{z} \rangle J \Phi_{z} + \frac{1}{4}e^{-u} \langle \Phi_{zz}, J \Phi_{z} \rangle J\Phi_{\bar{z}} - \frac{1}{2}\langle \Phi_{z} , \hat{\Phi}_{z}\rangle \hat{\Phi}, \\
    &\hat{\Phi}_{z} = \frac{1}{4}e^{-u}\langle \Phi_{z}, \hat{\Phi}_{z}  \rangle \Phi_{\bar{z}} - 2iCJ\Phi_{z},
\end{aligned}
\end{equation*}
where the derivatives respect to $z$ and $\bar{z}$ are given by $\partial_z = \tfrac12 (\partial_x - i \partial_y)$, $\partial_{\bar z} = \tfrac12 (\partial_x + i \partial_y)$, and $H$ is the mean curvature of $\Phi$, and $\hat{\Phi} = (\phi , -\psi)$. 
These quantities correspond to (4.3), (4.4) and (4.6) in \cite{CU}. When $\Phi$ is minimal, it follows that
\begin{align}
    &\Phi_{z\bar{z}} = -2e^{u}\Phi, \label{eq: CU 1} \\
    &\Phi_{zz} = u_{z}\Phi_{z} + \frac{1}{4}e^{-u} \langle \Phi_{zz}, J \Phi_{z} \rangle J\Phi_{\bar{z}} -\frac{1}{2}\langle \Phi_{z} , \hat{\Phi}_{z} \rangle\hat{\Phi}, \\
    &\hat{\Phi}_{z} = \frac{1}{4}e^{-u}\langle \Phi_{z} , \hat{\Phi}_{z} \rangle \Phi_{\bar{z}} - 2iCJ\Phi_{z}, \\
    &16e^{2u}(1 - 4C^{2}) = \left|\langle \Phi_{z} , \hat{\Phi}_{z} \rangle\right|^{2} = 4\left| \left\langle \phi_{z} , \phi_{z} \right\rangle \right|^{2} = 4\left| \left\langle \psi_{z}, \psi_{z}  \right\rangle \right|^{2}. \label{eq: CU 4}
\end{align}
The equation \eqref{eq: CU 1} implies that $\phi_{z\bar{z}} = -2e^{u}\phi$ and $\psi_{z\bar{z}} = -2e^{u}\psi$. This means that $\phi, \psi : \Sigma \to \mathbb{S}^{2}$ are harmonic maps. Thus, the associated Hopf differential
\begin{equation*}
    \Theta(z) = \left\langle \phi_{z}, \phi_{z} \right\rangle \otimes (\mathrm{d} z)^{2} = -\left\langle \psi_{z}, \psi_{z} \right\rangle \otimes (\mathrm{d} z)^{2} = \frac{1}{2} \langle \Phi_{z}, \hat{\Phi}_{z} \rangle \otimes (\mathrm{d} z)^{2}
\end{equation*}
is holomorphic. From \eqref{eq: CU 4}, we have 
\begin{equation}\label{eq: CU Godazzi}
    |\Theta|^{2} = 4e^{2u} \left(1 - 4 C^{2}\right).
\end{equation}
 The squared norm of the gradient of $C$ is given in (4.12) in \cite{CU} by
 \[
 \left|\nabla C \right|^{2} = \frac{\left(1- 4 C^2\right) \left( 2C^2 -K\right)}{2}. 
 \]
 Since the Gauss curvature $K$ can be given by $K = -\frac14 e^{-u}u_{z\bar{z}}$, it can be rephrased as 
 \begin{equation}\label{eq:generalGC1}
4|C_{z}|^{2}= \left(1- 4 C^2\right) \left(u_{z\bar{z}}  + 8e^{u} C^{2}\right). 
\end{equation}        

 To establish the correspondence between minimal Lagrangian surfaces in $Q_{2}$ and $\mathbb{S}^{2} \times
 \mathbb{S}^{2}$, 
 we first note that $Q_2$ is isometric to $\mathbb S^2 \times \mathbb S^2$, where 
 two $2$-spheres $\mathbb S^2$ have the constant curvature $4$ \cite{WV2021}. Thus to compare our surface $f$ and 
 \cite{CU}, we need to use the metric
  \[
 4 ds^{2}_{M} =  ds_{\Sigma}^2 = 8e^{u}\mathrm{d}z\mathrm{d}\bar{z}.
 \]
 
 We now show \eqref{eq:generalGC1} is the Gauss equation for a minimal Lagrangian 
 surface $f$ in $Q_2$ under suitable identification.
\begin{Proposition}\label{Cor1}
    Let $f : M \to Q_{2}$ be a minimal Lagrangian immersion, and define 
    $\hat C = \frac{1}{2}e^{-u} |\beta|$.  Then the last PDE of \eqref{eq: M-C} is equivalent to 
\begin{equation}\label{eq:generalGC2}
    u_{z\bar{z}} + 8e^{u}\hat C^{2} - \frac{4|\hat{C}_{z}|^{2}}{1-4\hat{C}^{2}}= 0.
\end{equation}        
    \end{Proposition}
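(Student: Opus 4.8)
The plan is to translate the last PDE of \eqref{eq: M-C} into the claimed Gauss-type equation \eqref{eq:generalGC2} by a direct substitution, using the relations among $u$, $\alpha$ and $\beta$ that hold for a minimal Lagrangian surface. Recall that for a minimal Lagrangian surface we have (Theorem~\ref{Thm1} and the discussion around \eqref{eq: M-C}) the algebraic constraint $e^{2u} - |\beta|^{2} = |\alpha|^{2}$, that $\alpha$ is holomorphic, i.e. $\alpha_{\bar z} = 0$, and that $\arg\beta$ is constant, so that $\bar\beta\beta_{z} = |\beta||\beta|_{z}$ and similarly for $\bar z$-derivatives. Setting $\hat C = \tfrac12 e^{-u}|\beta|$, the algebraic constraint reads $|\alpha|^{2} = e^{2u} - 4 e^{2u}\hat C^{2} = e^{2u}(1 - 4\hat C^{2})$, which is exactly the analogue of \eqref{eq: CU Godazzi} in our normalization; this is the dictionary that makes the two formulations match.

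First I would rewrite the last PDE of \eqref{eq: M-C},
\[
u_{z\bar z}e^{u}|\beta|^{2} - \tfrac14|\alpha_z|^{2}e^{u} - |u_z|^{2}e^{u}|\alpha|^{2} + \tfrac12\alpha_z u_{\bar z}e^{u}\bar\alpha + \tfrac12\bar\alpha_{\bar z}u_z e^{u}\alpha + 2|\beta|^{4} = 0,
\]
by dividing through by $e^{u}$ (never zero) and using $|\beta|^{2} = e^{2u} - |\alpha|^{2}$ to express everything in terms of $u$, $|\alpha|$, and the holomorphic datum $\alpha$. Next I would compute $\hat C_z = \tfrac12 e^{-u}(|\beta|_z - u_z|\beta|)$, hence $|\hat C_z|^{2} = \tfrac14 e^{-2u}\bigl(|\,|\beta|_z - u_z|\beta|\,|^{2}\bigr)$, and similarly expand $\dfrac{4|\hat C_z|^{2}}{1 - 4\hat C^{2}}$ using $1 - 4\hat C^{2} = e^{-2u}|\alpha|^{2}$, so that this term becomes $\dfrac{e^{2u}}{|\alpha|^{2}}\,|\,|\beta|_z - u_z|\beta|\,|^{2}$. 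The remaining task is to differentiate the constraint $|\beta|^{2} = e^{2u} - |\alpha|^{2}$ to get $|\beta||\beta|_z = u_z e^{2u} - \tfrac12\bar\alpha\alpha_z$ (using $\alpha_{\bar z}=0$ to keep $|\alpha|^{2}_z = \bar\alpha\alpha_z$), substitute this into the expression for $|\hat C_z|^{2}$, and verify algebraically that \eqref{eq:generalGC2} is term-by-term equivalent to the original PDE.

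The main obstacle — really the only substantive point — is the bookkeeping in the substitution $|\beta||\beta|_z = u_z e^{2u} - \tfrac12\bar\alpha\alpha_z$: after plugging this into $\dfrac{e^{2u}}{|\alpha|^{2}}\,|\,|\beta|_z - u_z|\beta|\,|^{2}$ one gets, upon clearing the factor $|\beta|^{2}$, a term proportional to $\dfrac{e^{2u}}{|\alpha|^{2}|\beta|^{2}}\,|{-\tfrac12\bar\alpha\alpha_z} - u_z|\alpha|^{2}|^{2}$, and one must check that the apparent pole at $|\alpha| = 0$ is spurious and that what remains matches $\tfrac14|\alpha_z|^{2} + |u_z|^{2}|\alpha|^{2} - \tfrac12(\alpha_z u_{\bar z}\bar\alpha + \bar\alpha_{\bar z}u_z\alpha) - 2|\beta|^{4}/e^{u}\cdot(\text{correction})$ after multiplying back by the right powers of $e^{u}$. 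Expanding $|{-\tfrac12\bar\alpha\alpha_z} - u_z|\alpha|^{2}|^{2} = \tfrac14|\alpha|^{2}|\alpha_z|^{2} + |u_z|^{2}|\alpha|^{4} + \tfrac12|\alpha|^{2}(\bar\alpha_{\bar z}u_z\alpha + \alpha_z u_{\bar z}\bar\alpha)$ shows the overall factor of $|\alpha|^{2}$ cancels the denominator, so the identity is in fact polynomial; the sign of the cross term requires care, but once it is tracked the two equations coincide, and $|\beta|^{2} = e^{2u}(1-4\hat C^{2})$ converts the leftover $2|\beta|^{4}/e^{u}$ into $8 e^{u}\hat C^{2}|\beta|^{2}$, matching the $8e^{u}\hat C^{2}$ term after the common factor $|\beta|^{2}$ is removed. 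I would present this as a single computation, noting at the end that the equivalence is reversible since every step is an algebraic identity valid wherever the frame is defined, and invoking Remark~\ref{Rm:assumption} to cover the locus $|\alpha| = |\beta|$ or $|\alpha|=0$ if needed.
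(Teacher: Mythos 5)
Your proposal is correct and follows essentially the same route as the paper: a direct algebraic verification using $|\alpha|^{2}=e^{2u}(1-4\hat C^{2})$, the holomorphy of $\alpha$, and the differentiated constraint $|\beta||\beta|_{z}=u_{z}e^{2u}-\tfrac12\bar\alpha\alpha_{z}$, with the cancellation of the spurious pole at $|\alpha|=0$ noted just as in the paper (where one observes $\hat C_{z}=0$ wherever $\hat C=\tfrac12$). One bookkeeping slip to fix when writing it out: $|\beta|\bigl(|\beta|_{z}-u_{z}|\beta|\bigr)=u_{z}|\alpha|^{2}-\tfrac12\bar\alpha\alpha_{z}$ (not $-u_{z}|\alpha|^{2}-\tfrac12\bar\alpha\alpha_{z}$), so the cross term in the expansion carries a minus sign, which is exactly what is needed to match the $+\tfrac12\alpha_{z}u_{\bar z}e^{u}\bar\alpha+\tfrac12\bar\alpha_{\bar z}u_{z}e^{u}\alpha$ terms of the PDE after moving them across the equality; you should also dispose of the case $\beta\equiv 0$ separately (as the paper does via Lemma \ref{lem:totally}) before dividing by $|\beta|^{2}$.
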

\begin{proof}
 Note $|\alpha|^{2} = e^{2u}(1 - 4\hat C^{2})$ holds.  
 When $\beta \equiv 0$, that is, $\hat C=0$, by Lemma \ref{lem:totally}, $f$ becomes the totally geodesic torus and 
 \eqref{eq:generalGC2} clearly holds. From now on we assume that $\beta \not\equiv 0$.
By $\hat C = \frac{1}{2}e^{-u} |\beta|$, it follows that
\begin{equation*}
    \frac{1}{4}|\alpha_{z}|^{2}e^{u} + |u_{z}|^{2} e^{u} |\alpha|^{2} - \frac{1}{2} \alpha_{z} u_{\bar{z}} e^{u} \bar{\alpha} - \frac{1}{2}\bar{\alpha}_{\bar{z}} u_{z} e^{u} \alpha = e^{u}|\beta|^{2} \left( \frac{4|\hat{C}_{z}|^{2}}{1-4\hat{C}^{2}} \right),
\end{equation*}
and 
\begin{equation*}
2|\beta|^{4} = e^{u} |\beta|^{2} \left( 8e^{u}\hat C^{2} \right).
\end{equation*}
Note that 
$e^{u}|\beta|^{2} \left( \frac{4|\hat{C}_{z}|^{2}}{1-4\hat{C}^{2}} \right)$ is well-defined even if a point $p$ where $\hat C(p) = \frac{1}{2}$, since 
$\hat C_z(p)=0$ there.
Thus the last PDE of \eqref{eq: M-C} can be written as follows:
\begin{equation}\label{eq: second order PDE1}
    e^{u}|\beta|^{2}\left( u_{z\bar{z}} + 8e^{u}\hat C^{2} - \frac{4|\hat{C}_{z}|^{2}}{1-4\hat{C}^{2}} \right) = 0.
\end{equation}
Since $\beta \not\equiv 0$, we have \eqref{eq:generalGC2}. This completes the proof.
\end{proof}
 By Proposition \ref{Cor1}, it is natural to have the 
 correspondence of the associated Hopf differential $\Theta$ and the associated Jacobian $C$ 
with the quantities $\alpha, \beta$ and $u$ by
\begin{equation}\label{eq: CU alpha}
    \Theta = 2\alpha, \quad     C  = \hat C = \frac{1}{2}e^{-u} |\beta|.
\end{equation}
It is easy to check the condition $C^{2} < 1/4$ in \cite[Theorem 4.4]{CU} is actually the condition $|\alpha| > 0$. 
 Indeed, a straightforward computation shows that \eqref{eq: CU alpha} holds for surfaces 
 $\Phi$ in $\mathbb S^2  \times \mathbb S^2$ and  $f$ in $Q_2$.
 \begin{Remark}
  The normalization of $\Theta$ given in \cite{CU} corresponds to the normalization of $\Theta$ here, i.e. $|\Theta| = 1$. We then obtain the Gauss and Codazzi equations (4.19) in \cite{CU} of the minimal Lagrangian immersion $\Phi$ under the new metric $ds^{2}_{\Sigma} = 8e^{u}\text{d}z\text{d}\bar{z}$ as follows:
\begin{equation}\label{eq: CU Gauss-Codazzi}
    u_{z\bar{z}} + 8e^{u}C^{2} - 16e^{2u}|C_{z}|^{2} = 0, \quad 1 - 4C^{2} = \frac{1}{4}e^{-2u}.
\end{equation}
 \end{Remark}

\section{The Lie group and the Lie algebra isomorphism for $\SO$ }\label{app: lie group lie algebra}
 We first check that the Lie group isomorphism
\begin{equation}\label{Lie group isomorphism1}
 \SO \cong  (\SU \times \SU)/\boldsymbol{Z}_2,    
\end{equation}
 which is explicitly given in \cite{Yokota}.

The set of all quaternions can be defined by 
\begin{equation*}
\begin{aligned}
    \bsH = \bsR \oplus \bsR \bsi \oplus \bsR \bsj \oplus \bsR \bsk= \{ x + y \bsi + s\bsj + t\bsk ~|~ x, y, s, t \in \bsR \},
\end{aligned}
\end{equation*}
with $\bsi^{2} = \bsj^{2} = \bsk^{2} = -1, 
    \bsi \bsj = -\bsj \bsi = \bsk, \bsj \bsk = -\bsk \bsj = \bsi,  \bsk \bsi = -\bsi \bsk = \bsj
    $, where $\bsR$ is the set of all real numbers.
For a quaternion $\alpha$, it can be expressed as follows:
\begin{equation*}
    \alpha = x + y\bsi +  s\bsj + t\bsk = (x + y\bsi) + (s + t\bsi)\bsj \quad \quad x, y, s, t \in \bsR.
\end{equation*}
Thus the set $\bsH$ can also be written as
$\bsH = \bsC \oplus \bsC \bsj = \{ a + b\bsj ~|~ a, b \in \bsC \}$ with $\bsj^{2} = -1$,
where $\bsC$ is the set of all complex numbers. Now we define the symplectic group by 
\begin{equation*}
    {\rm{Sp}} (n) := \{ A \in M(n , \bsH) ~|~ A^{\ast}A = E\}. 
\end{equation*}
In particular, if $\alpha = a + b\bsj \in \Sp$, it satisfies $|a|^{2} + |b|^{2} = 1$.

The Lie group isomorphism \eqref{Lie group isomorphism1} can be established in the following two steps.
\begin{enumerate}
    \item $\SU \cong \Sp$. When $p = p_{0} + p_{1} \bsi + p_{2} \bsj + p_{3}\bsk \in \Sp$, 
    we can establish the map $k : \Sp \to \SU$ by
   \begin{equation*}
    k(p) = \begin{pmatrix}
 p_{0} + p_{1} \bsi & p_{2} + p_{3}\bsi \\
 -p_{2} + p_{3}\bsi & p_{0} - p_{1} \bsi
\end{pmatrix}. 
\end{equation*}
See \cite[p. 93, Theorem 10.4]{Yokota} for details. 

    \item $(\Sp \times \Sp) / \boldsymbol{Z}_{2} \cong \SO$. When $p = p_{0} + p_{1} \bsi + p_{2} \bsj + p_{3}\bsk,~ q = q_{0} + q_{1} \bsi + q_{2} \bsj + q_{3}  \bsk \in \Sp$, we can establish the map $\psi : \Sp \times \Sp \to \SO$ by
    \begin{equation}\label{eq:psi}
       \psi (p , q) = \begin{pmatrix}
 p_{0} & -p_{1}  & -p_{2} & -p_{3} \\
 p_{1} & p_{0} & -p_{3} & p_{2} \\
 p_{2} & p_{3} & p_{0} & -p_{1} \\
 p_{3} & -p_{2} & p_{1} & p_{0}
\end{pmatrix} \begin{pmatrix}
 q_{0} & q_{1} & q_{2} & q_{3} \\
 -q_{1} & q_{0} & -q_{3} & q_{2} \\
 -q_{2} & q_{3} & q_{0} & -q_{1} \\
 -q_{3} & -q_{2} & q_{1} & q_{0}
\end{pmatrix}.
    \end{equation}
Clearly, the kernel of $\psi$ is $\pm \id$, and the isomorphism can be established.
See \cite[p. 99, Theorem 10.7]{Yokota} for details.  
\end{enumerate}

Accordingly, we can introduce the Lie algebra isomorphism  
\begin{equation}\label{Lie algebra isomorphism1}
\mathfrak{so}(4) \cong \mathfrak{su}(2) \oplus \mathfrak{su}(2).
\end{equation}
Define the Lie algebra of the symplectic group by
\begin{equation*}
    \mathfrak{sp}(n) := \{ X \in M(n , \bsH) ~|~ X^{\ast} + X = 0 \}.
\end{equation*}
In particular, if $\alpha = a + b\bsj \in \mathfrak{sp}(1)$, it is easy to check $\bar{\alpha} = \bar{a} - b \bsj$, and it follows that $\bar{a} = -a$.

The Lie algebra isomorphism \eqref{Lie algebra isomorphism1} can be established in the following two steps.
\begin{enumerate}
    \item $\mathfrak{sp}(1) \cong \mathfrak{su}(2)$. 
    When $p = p_{1} \bsi + p_{2} \bsj + p_{3}\bsk \in \mathfrak{sp}(1)$, we can establish the map $k : \mathfrak{sp}(1) \to \mathfrak{su}(2)$ by
    \begin{equation*}
    k(p) = \begin{pmatrix}
 p_{1}\bsi & p_{2} + p_{3}\bsi \\
 -p_{2} + p_{3}\bsi & -p_{1}\bsi
\end{pmatrix}.
\end{equation*}
See \cite[p. 119, Theorem 11.9]{Yokota} for details.

\item $\mathfrak{o}(4) \cong \mathfrak{sp}(1) \oplus \mathfrak{sp}(1)$. When $p = p_{1} \bsi + p_{2} \bsj + p_{3}\bsk,~ q = q_{1} \bsi + q_{2} \bsj + q_{3}  \bsk \in \mathfrak{sp}(1)$, we can establish the map $\psi : \mathfrak{sp}(1) \oplus \mathfrak{sp}(1) \to \mathfrak{o}(4)$ by
\begin{equation*}
    \psi(p_{1} \bsi + p_{2} \bsj + p_{3}\bsk, q_{1} \bsi + q_{2} \bsj + q_{3}  \bsk)
= \begin{pmatrix}
 0 & -p_{1} + q_{1} & -p_{2} + q_{2} & -p_{3} + q_{3} \\
 p_{1} - q_{1} & 0 & -p_{3} - q_{3} & p_{2} + q_{2} \\
 p_{2} - q_{2} & p_{3} + q_{3} & 0 & -p_{1} - q_{1} \\
 p_{3} - q_{3} & -p_{2} - q_{2} & p_{1} + q_{1}  & 0
\end{pmatrix}.
\end{equation*}
See \cite[p. 120, Theorem 11.10]{Yokota} for details.
\end{enumerate}

On behalf of all authors, the corresponding author states that there is no conflict of
interest. No new data were created or analyzed in this study.

\bibliographystyle{plain}
\bibliography{mybib}

\end{document}